\documentclass[a4paper,11pt]{amsart}
\usepackage[ngerman,english]{babel}
\usepackage[T1]{fontenc}
\usepackage[ansinew]{inputenc}

\usepackage{amsmath, amsfonts, amssymb, amsthm}
\usepackage{mathrsfs}
\usepackage{mathtools}

\usepackage[shortalphabetic,initials]{amsrefs}
\usepackage{esint}
\usepackage{nccfoots}
\usepackage{accents}

\usepackage{color}
 \textheight24.8cm
 \textwidth16cm \hoffset=-1.9cm \voffset=-1.6cm
\usepackage{enumerate}

\numberwithin{equation}{section}

\newcommand{\mm}{\mathfrak{m}}
\newcommand{\dd}{\operatorname{d}\!}

\newcommand{\dm}{\mathop{}\!\mathrm{d}\mathfrak{m}} 

\newcommand{\eent}{\operatorname{Ent}} 
\newcommand{\ch}{\operatorname{Ch}} 
\newcommand{\rcd} [2]{\operatorname{RCD}^*(#1,#2)} 

\newcommand{\sob}{W^{1,2}} 
\newcommand{\li}{\operatorname{lip}} 
\newcommand{\Lip}{\operatorname{Lip}} 
\newcommand{\lip}{\operatorname{Lip}} 

\newcommand{\ee}{\operatorname{e}} 
\newcommand{\ric}{\operatorname{Ric}}

\newcommand{\ehat}{\hat{\mathcal E}}
\newcommand{\chhat}{\widehat\ch}
\newcommand{\kringel}[1]{\accentset{\circ}{#1}}
\newcommand{\hatp}{\hat{\mathscr P}\!}
\newcommand{\scrp}{\mathscr P}

\newcommand{\tild}{\tilde}
\newcommand{\evik}{\operatorname{EVI}_K}


\newcommand{\mn}{\mathbb N}

\newcommand{\mr}{\mathbb R}

\newcommand{\eps}{\varepsilon}

\theoremstyle{definition}
\newtheorem{Def}{Definition}\numberwithin{Def}{section}

\theoremstyle{plain}
\newtheorem{Thm}[Def]{Theorem}

\newtheorem{Prop}[Def]{Proposition}
\newtheorem{Lemma}[Def]{Lemma}
\newtheorem{Cor}[Def]{Corollary}

\theoremstyle{plain}
\newtheorem{theorem}[Def]{Theorem}
\newtheorem{corollary}[Def]{Corollary}
\newtheorem{lemma}[Def]{Lemma}
\newtheorem{proposition}[Def]{Proposition}

\newtheorem{assumption}[Def]{Assumption}

\newtheorem*{definition*}{Definition}

\theoremstyle{remark}
\newtheorem{remark}[Def]{Remark}
\newtheorem{example}[Def]{Example}

\newtheorem*{claim*}{Claim}
\newtheorem*{remark*}{Remark}
\newtheorem*{example*}{Example}
\newtheorem*{notation*}{Notation}

\def\N{{\mathbb N}}

\def\R{{\mathbb R}}

\def\PP{{\mathbb P}}

\newcommand{\Ent}{\mathrm{Ent}}

\newcommand{\Cpl}{\mathrm{Cpl}}
\newcommand{\Pz}{\mathcal{P}}

\begin{document}

  \title[Heat Flow with Dirichlet Boundary Conditions and Gluing]{Heat Flow with Dirichlet Boundary Conditions via Optimal Transport and Gluing of Metric Measure Spaces}
  \author{Angelo Profeta, Karl-Theodor Sturm}
  \thanks{\emph{Acknowledgements:} Both authors gratefully acknowledge support by the German Research Foundation through the Excellence Cluster \emph{Hausdorff Center for Mathematics} and the CRC 1060  \emph{The Mathematics of Emergent Effects} as well as by the European Union through the ERC-AdG \emph{RicciBounds}. The first author would like to thank his mathematical sparring partners Susanne Hilger and Lorenzo Dello Schiavo.}
  \date{\today}
  \address{Institute for Applied Mathematics, University of Bonn, Endenicher Allee
	   60, 53115 Bonn, Germany}
  \email{profeta@iam.uni-bonn.de, sturm@uni-bonn.de}
 	\keywords{Metric measure spaces, Gluing, Doubling, Wasserstein space, Gradient flows, Heat flow, Dirichlet boundary condition, curvature-dimension condition, Transport estimates}
 	\subjclass[2010]{ 35K05, 58J32, 58J35, 51F99, 53C23, 60B10, 54E35, 31E05 }
  \begin{abstract}
  We introduce the \emph{transportation-annihilation distance} $W_p^\sharp$ between subprobabilities and derive contraction estimates with respect to this distance  for the heat flow with homogeneous Dirichlet boundary conditions on an open set in a metric measure space. 
  We also deduce the Bochner inequality for the Dirichlet Laplacian as well as gradient estimates for the associated Dirichlet heat flow.
  
  For the Dirichlet heat flow, moreover, we establish a gradient flow interpretation within a suitable  space of \emph{charged probabilities}.
  In order to prove this, we will work with the \emph{doubling} of the open set, the space obtained by gluing together two copies of it along the boundary. 
  \end{abstract}
   \maketitle

\allowdisplaybreaks

\section{Introduction and Statement of Main Results} \label{sec:introduction}
We present an approach to heat flow with homogeneous Dirichlet boundary conditions via optimal transport -- indeed, the very first ever --  based on a novel particle interpretation for this evolution.
The classical particle interpretation for the heat flow in an open set $Y$ with Dirichlet boundary condition is based on
particles which move around in $Y$ and are killed (or lose their mass) as soon as they hit the boundary $\partial Y$.
Our new interpretation will be based on particles moving around in $Y$, which are reflected if they hit the boundary, and which thereby randomly change their ``charge'': half of them change into ``antiparticles'', half of them continue to be normal particles. Effectively, they annihilate each other but the total number of charged particles remains constant.

This leads us to regard the initial probability distribution  as a distribution $\sigma_0^+$ of normal particles, with no antiparticles being around at time 0, i.e. $\sigma_0^-=0$. In the course of time, $\sigma_t^+$ and $\sigma_t^-$ will evolve as subprobability measures on $Y$ and so does the ``effective distribution'' $\sigma_t^0:=\sigma_t^+-\sigma_t^-$ whereas the ``total distribution'' $\overline\sigma_t :=\sigma_t^++\sigma_t^-$ continues to be a probability measure. The latter will evolve as heat flow with Neumann boundary conditions whereas the former will evolve as heat flow with Dirichlet boundary conditions. The evolution of the charged particle distribution $\sigma_t=(\sigma_t^+,\sigma_t^-)$ will be characterized as an EVI-gradient flow for the Boltzmann entropy. New transportation distances for \emph{subprobability} measures will yield contraction estimates for the effective flow.

Technically, we will interpret the pairs of subprobability measures $(\sigma^+,\sigma^-)$ as a \emph{probability} measure on the doubling of $Y$ in $X$, i.e.\! a space obtained by gluing together two copies of $X$ along the ''boundary`` $X\setminus Y$. Both settings are equivalent. Under a curvature condition for the doubling, we get Wasserstein contraction results and gradient estimates for the heat flow with Dirichlet boundary values. 

In particular, we also obtain the very first version of a  Bochner inequality for the  Dirichlet Laplacian on a convex subset of a Riemannian manifold -- which surprisingly involves both, the Dirichlet Laplacian and the Neumann Laplacian.

\subsection{Transportation-annihilation distance between subprobabilities}\label{subsec:Wnull}

Let $(X,d)$ be a complete separable metric space and $Y\subset X$ be an open subset with $\emptyset\not= Y\not= X$.
The distance between two normal particles at locations $x$ and $y\in X$ will be given by $d(x,y)$ --  and so is the distance between two antiparticles at $x$ and $y$. The distance between a normal particle at $x\in X$ and an antiparticle at $y\in X$ (or vice versa) will be given by
$$d^*(x,y):=\inf_{z\in X\setminus Y}\big[ d(x,z)+d(z,y)\big].$$
The set of {\em subprobability measures} on $Y$ (i.e.\! measures $\mu$ on $Y$ equipped with its Borel field with mass $\mu(Y)\le 1$) will be denoted by $\Pz^{sub}(Y)$.
Moreover, we introduce the set of {\em charged probability measures} on $X$ by
\begin{eqnarray*}
\tild\Pz(Y|X)&:=&\Big\{\sigma=(\sigma^+,\sigma^-) \,\big|\,  \sigma^\pm\in \Pz^{sub}(X), \ 
\sigma^+|_{X\setminus Y}=\sigma^-|_{X\setminus Y}, \ \sigma^+(X)+\sigma^-(X)=1\Big\}.
\end{eqnarray*}
The maps $\sigma\mapsto \sigma^0:=\sigma^+-\sigma^-$
and 
 $ \sigma\mapsto \overline\sigma:=\sigma^++\sigma^-$
 will assign the {\em effective measure} and the {\em total measure}, resp., to a charged probability measure. Observe that $\sigma^0$ is in general a \emph{signed} measure. However, we will mostly have charged measures with $\sigma^0\geq 0$ since we are usually starting with a subprobability $\mu$ and take an appropriate measure $\sigma$ such that $\sigma^0=\mu$. 
 
 Given $\sigma,\tau\in \tild\Pz(Y|X)$ and a coupling $q\in \Cpl(\overline\sigma,\overline\tau)$ of their total measures, there  are canonical decompositions
 $\sigma^i=\sigma^{i+}+\sigma^{i-}$,
 $\tau^j=\tau^{+j}+\tau^{-j}$,
 $q=q^{++}+q^{+-}+q^{-+}+q^{--}$ such that
 $q^{ij}\in\Cpl(\sigma^{ij},\tau^{ij})$ for $i,j\in\{+,-\}$.
 To construct these decompositions, choose nonnegative Borel functions $u^i, v^j$ on $X$ with $\sigma^i=u^i\, \overline \sigma$, $\tau^j=v^j\, \overline \tau$ and set
 $\dd q^{ij}(x,y):= u^i(x) v^j(y)\,\dd q(x,y)$
 as well as
 $\sigma^{ij}(\cdot):=q^{ij}(\cdot,X),\quad  \tau^{ij}(\cdot):=q^{ij}(X,\cdot)$.
 
 Having this canonical decomposition for $q\in \Cpl(\overline\sigma,\overline\tau)$ in mind, we define the \emph{$L^p$-transportation distance between charged probability measures $\sigma,\tau\in\tild\Pz(Y|X)$} by
 \begin{align}
    \tild W_p(\sigma,\tau) :=& \inf\Big\{ \int_{X\times X} d(x,y)^p \dd q^{++}(x,y)+\int_{X\times X} d^*(x,y)^p \dd q^{+-}(x,y) \notag\\
      &\qquad\quad+\int_{X\times X} d^*(x,y)^p \dd q^{-+}(x,y)+\int_{X\times X} d(x,y)^p \dd q^{--}(x,y) \,\Big|\, q\in \Cpl(\overline\sigma,\overline\tau)\Big\}^{1/p}  \label{eq:Wtilde}
\end{align}
for $p\in [1,\infty)$.

Define
$\tild\Pz_p(Y|X):=\big\{\sigma\in
\tild\Pz(Y|X)\,|\, \ \tild W_p\big(\sigma,(\frac12\delta_x,\frac12\delta_x)\big)<\infty$ for some/all $x\in X\big\}$.
Obviously, the map $\mu\mapsto\big(\frac12\mu,\frac12\mu\big)$ defines an isometric embedding of 
$\Pz_p(X)$ into $\tild\Pz_p(Y|X)$.

Based on an isometry between $\tild\Pz_p(Y|X)$ and $\Pz_p(\hat X)$ with a suitable ``glued space'' $\hat X$, we will deduce important metric properties of $\tild W_p$,
see Section \ref{subsec:identification}:
\begin{lemma} \label{lem:hatW_metric}
For each $p\in[1,\infty)$, $\tild W_p$ is a complete separable metric on $\tild\Pz_p(Y|X)$.
It is a length metric if $d$ is a length metric;  $\tild\Pz_p(Y|X)$ is compact if
 $X$ is compact.
\end{lemma}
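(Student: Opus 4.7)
The plan is to carry out in detail the identification announced in the paragraph preceding the lemma: construct an explicit ``glued space'' $\hat X$ together with a bijective isometry $\Phi\colon \tild\Pz_p(Y|X)\to \Pz_p(\hat X)$, and then transfer all the classical properties of $W_p$ on $\Pz_p$ of a complete separable metric space through $\Phi$.

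First I would fix the doubling. Let $X^+$ and $X^-$ be two disjoint copies of $X$ (with copy maps $x\mapsto x^\pm$) and set $\hat X:=(X^+\sqcup X^-)/\!\sim$, where $x^+\sim x^-$ iff $x\in X\setminus Y$. Define $\hat d$ by $\hat d(x^+,y^+)=\hat d(x^-,y^-)=d(x,y)$ and $\hat d(x^+,y^-)=\hat d(y^-,x^+)=d^*(x,y)$. A direct check using the definition of $d^*$ (triangle inequality with an intermediate boundary point) shows $\hat d$ is a well-defined metric, separable since $X$ is separable, and a length metric if $d$ is. Completeness follows by checking Cauchy sequences: any such sequence is eventually contained either in $X^+$, in $X^-$, or it accumulates on the identified boundary $X\setminus Y\hookrightarrow \hat X$, and in each case the limit is furnished by completeness of $X$. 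Compactness when $X$ is compact is immediate since $\hat X$ is the continuous image of the compact space $X^+\sqcup X^-$.

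Next I would define $\Phi$. Given $\sigma=(\sigma^+,\sigma^-)\in\tild\Pz(Y|X)$, let $\hat\sigma\in\Pz(\hat X)$ be the pushforward of $\sigma^+$ under $x\mapsto x^+$ restricted to $Y$, plus the pushforward of $\sigma^-$ under $x\mapsto x^-$ restricted to $Y$, plus the pushforward of $\sigma^+|_{X\setminus Y}+\sigma^-|_{X\setminus Y}$ to the identified boundary in $\hat X$. Using $\sigma^+|_{X\setminus Y}=\sigma^-|_{X\setminus Y}$ and $\sigma^+(X)+\sigma^-(X)=1$ one checks $\hat\sigma(\hat X)=1$. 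Conversely, any $\hat\sigma\in\Pz(\hat X)$ can be disintegrated over $\hat X=Y^+\sqcup Y^-\sqcup (X\setminus Y)$; splitting the mass on the identified boundary symmetrically in halves gives back a unique $\sigma\in\tild\Pz(Y|X)$, so $\Phi$ is a bijection.

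Then I would verify the isometry $\tild W_p(\sigma,\tau)=W_p(\hat\sigma,\hat\tau)$. The key point is that any $\hat q\in \Cpl(\hat\sigma,\hat\tau)$ decomposes canonically as $\hat q=\hat q^{++}+\hat q^{+-}+\hat q^{-+}+\hat q^{--}$ by restricting to the four cells $\{+,-\}^2$ of $\hat X\times\hat X$ (points on the glued boundary can be assigned to $+$ or $-$ arbitrarily provided we keep the weights compatible with the symmetric splitting in the definition of $\Phi$). Pushing each $\hat q^{ij}$ down to $X\times X$ produces a $q\in\Cpl(\overline\sigma,\overline\tau)$ together with a decomposition $q=q^{++}+q^{+-}+q^{-+}+q^{--}$, and conversely every such decomposition in the definition of $\tild W_p$ lifts to a coupling on $\hat X\times\hat X$. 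Since $\hat d^p$ equals $d^p$ on same-sign cells and $(d^*)^p$ on opposite-sign cells, the cost matches integral by integral, and taking the infimum gives equality of the two transportation costs.

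Finally I would conclude: $(\hat X,\hat d)$ is a complete separable metric space, so by the classical theory $(\Pz_p(\hat X),W_p)$ is a complete separable metric space; it is a length space when $\hat d$ is a length metric, and compact when $\hat X$ is compact. Transferring via the isometry $\Phi$ yields all four statements for $\tild W_p$ on $\tild\Pz_p(Y|X)$. The main technical obstacle I foresee is the bookkeeping in the isometry step, in particular making sure the ``half-splitting'' on the identified boundary is done consistently so that the four pieces of a coupling $\hat q$ match exactly the admissible decompositions $q=q^{++}+q^{+-}+q^{-+}+q^{--}$ used in the definition of $\tild W_p$; everything else is a routine transfer through $\Phi$.
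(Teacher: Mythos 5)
Your proposal follows exactly the paper's route: the paper proves this lemma by invoking the isometry between $(\tild\Pz_p(Y|X),\tild W_p)$ and $(\Pz_p(\hat X),\hat W_p)$ (Lemma \ref{lem:phipsi}, whose verification is left to the reader) together with the metric properties of the glued space $(\hat X,\hat d)$ (Proposition \ref{prop:X_hat_mms}, quoted from Bridson--Haefliger) and the resulting properties of the Wasserstein space over it (Corollary \ref{cor:W_hat_metric}). Your write-up simply fills in the details the paper delegates --- the construction of the doubling, the bijection $\Phi$, and the coupling-decomposition bookkeeping behind the isometry --- so it is correct and essentially identical in approach.
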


Now we are in position to define the
$L^p$-transportation semi-metric between subprobabilities. 
\begin{Def} \label{def:Wnull}
For  $\mu,\nu\in\Pz^{sub}(Y)$ and $p\in[1,\infty)$ we define
 \begin{align}
 W^0_p(\mu,\nu):=& \inf\Big\{\tild W_p(\sigma,\tau) \,\Big|\, \sigma,\tau\in\tild\Pz(Y|X), \sigma^0=\mu, \tau^0=\nu\Big\}
 \label{eq:Wnull}
\\
=&\inf\Big\{\tild W_p\big( (\mu+\rho,\rho), (\nu+\eta,\eta)\big) \,\Big|\, \rho,\eta\in\Pz^{sub}(X), 
(\mu+2\rho)(X)=1, (\nu+2\eta)(X)=1\Big\}, \label{w0 vs what2} 
\end{align}
called the \emph{transportation-annihilation pre-distance}.
Moreover, we let
\[ \Pz^{sub}_p(Y):=\big\{ \mu\in\Pz^{sub}(Y) \,\big|\,  W^0_p(\mu,\delta_y)<\infty \text{ for some/all } y\in Y\big\}. \]
\end{Def}

\begin{remark}\label{remark}
	\begin{itemize} 
		\item[a)] The infima  in the previous Definition  will be attained if $X$ is compact. 
		\item[b)] If $\mu$ and $\nu$ are {\em probability} measures, then $ W^0_p(\mu,\nu)$ coincides with the usual $L^p$-Kanto\-rovich-Wasserstein metric $W_p(\mu,\nu)$.
		\item[c)] In general, $ W^0_p$ will not satisfy the triangle inequality. For instance, let $X=\R, Y=(-3,3), \mu=\delta_{-2}, \nu=\delta_2, \xi=0$. Then
			\[ W^0_p(\mu,\nu)=4\not\le W_p^0(\mu,\xi)+W^0_p(\xi,\nu)=2. \]
		\item[d)] The constraints $(\mu+2\rho)(X)=1, (\nu+2\eta)(X)=1$ can equally  well be replaced by the seemingly weaker constraints $(\mu+2\rho)(X)\leq 1,(\nu+2\eta)(X)\leq 1$. Indeed, whenever we have subprobabilities such that the constraints hold with ``$\leq 1$'', the finiteness of $\tilde W_p((\mu+\rho,\rho),(\nu+\eta,\eta))$ implies that $(\mu+2\rho)(X)=(\nu+2\eta)(X)$. But then we can choose an arbitrary subprobability $\vartheta$ with $\vartheta(X)=\frac{1}{2}(1-(\mu+2\rho)(X))$ and define $\tilde \rho:= \rho+\vartheta, \tilde\eta:=\eta+ \vartheta$. These subprobabilities now satisfy $(\mu+2\tilde\rho)(X)=1=(\nu+2\tilde\eta)(X)$ and we have
			\[ \tilde W_p((\mu+\tilde\rho,\tilde\rho),(\nu+\tilde\eta,\tilde\eta)) \leq \tilde W_p((\mu+\rho,\rho),(\nu+\eta,\eta)). \]
	\end{itemize} 
\end{remark}


\medskip 

To overcome the lack of a triangle inequality for $W_p^0$, we now strive for a related length metric.
In a first step, we define a (pseudo-) metric, and out of this the induced \emph{length} (pseudo-) metric. 

\begin{Def} \label{def:W0_length}
\begin{itemize}
 \item[i)]  Given $\mu,\nu\in\mathcal P^{sub}_p(Y)$, let
  \begin{equation} W_p^\flat(\mu,\nu) := \inf \left\{ \sum_{i=1}^n W_p^0(\eta_{i-1},\eta_{i}) \,\Big|\, n\in\mn, \eta_i\in\mathcal P^{sub}_p(Y), \eta_0=\mu,\eta_n=\nu \right\}. \label{eq:Wflat}
  \end{equation}
 \item[ii)] Given a curve $(\eta_{s})_{s\in [0,1]}\subset \mathcal P^{sub}_p(Y)$, we define its $W_p^\flat$-length by
  \[ L_p^\flat(\eta) := \sup \left\{ \sum_{i=1}^n W_p^\flat(\eta_{s_{i-1}}, \eta_{s_i}) \,\Big|\, n\in\mn, 0=s_0<\ldots<s_n=1 \right\}. \]
  \item[iii)] For two measures $\mu,\nu\in\mathcal P^{sub}_p(Y)$, the induced length metric is now obtained by
    \begin{equation} 
    W_p^\sharp (\mu,\nu) := \inf \left\{ L_p^\flat(\eta) \,\big|\, \eta\colon [0,1] \to \mathcal P^{sub}_p(Y)\, W_p^\flat\text{-continuous, } \eta_0=\mu,\eta_1=\nu \right\}. \label{eq:Wsharp}
    \end{equation}
    It will be called \emph{transportation-annihilation distance}.
\end{itemize}
\end{Def}

\begin{remark}
Both, $W_p^\flat$ and $W_p^\sharp$ are a priori only pseudo-metrics; the former the biggest one below $W_p^0$, the latter the smallest \emph{intrinsic} one above $W_p^\flat$. In what follows, it will turn out however that  both indeed are metrics and for $p=1$ they coincide.
\end{remark}

\medskip

We will compare the previous (pseudo-)metrics with the Kantorovich-Wasserstein metric $W'_p$ on  the one-point completion $(Y',d')$ of $Y$. Here $Y':=Y\cup \{\partial\}$ and the \emph{shortcut metric} $d'$ is given by 
  \begin{equation} 
    d'(x,y) := \min \{ d(x,y), d'(x,\partial) + d'(y,\partial) \}, \label{eq:Wprime}
  \end{equation}
for $x,y\in Y$,
 $d'(x,\partial) = d'(\partial,x) := \inf_{z\in X\setminus Y} d(x,z)$, and  $d'(\partial,\partial):=0$.
If $(X,d)$ is a complete, length metric space then so will be  $(Y',d')$ .
  If in addition $X$ is proper (i.e.\ closed balls are compact) then  $(Y',d')$  will be a geodesic space.
  
  We will further denote $d^\dagger(x,y):= d'(x,\partial) + d'(y,\partial)$, so that $d'=\min\{ d,d^\dagger \}$.

\begin{Def}
 \begin{itemize}
 	\item[i)] $W'_p$ will denote the $L^p$-Kantorovich-Wasserstein distance on $\mathcal P_p(Y')$ induced by the distance $d'$.
 	\item[ii)] Extending each subprobability measure $\mu\in \mathcal P^{sub}(Y)$ to a probability measure $\mu'\in \mathcal P(Y')$ by
 	$\mu':=\mu + (1-\mu(Y))\delta_\partial$ induces a bijective embedding of 
 	$\mathcal P^{sub}(Y)$  into $\mathcal P(Y')$.
 	The induced distance on $\mathcal P^{sub}(Y)$ will  again  be denoted by $W'_p$.
 	\item[iii)] For subprobability measures $\mu,\nu$ \emph{of equal mass} we will also make use of the transportation cost 
 		\begin{equation} 
 		W_p^\dagger(\mu,\nu)^p := \inf_{q \in \operatorname{Cpl}(\mu,\nu)} \int_{Y\times Y} d^\dagger(x,y)^p \dd q(x,y) \label{eq:Wdagger}
 		\end{equation}
 		induced by $d^\dagger$. 
 	\item[iv)] Finally, for subprobabilities of equal mass define the $L^p$-transportation distance with respect to the meta-metric $d^*$
 	\begin{equation} 
 	W_p^*(\mu,\nu)^p := \inf_{q\in \Cpl(\mu,\nu)} \int_{X\times X} d^*(x,y)^p \dd q(x,y), \label{eq:Wstar}
 	\end{equation}
 	and let $W_p^*(\mu):=\frac12W_p^*(\mu,\mu)$, which will be called \emph{annihilation cost of the subprobability $\mu$}. 
 \end{itemize}
\end{Def}

\begin{remark}
	Obviously, $W_p^*$ is symmetric in its arguments and satisfies the triangle inequality but typically $W_p^*(\mu,\mu) \not=0$.
\end{remark}

\begin{example} 
	Let $X=\R, Y=(-1,1), \mu=\delta_{x}, \nu=\delta_y$ for $x,y\in Y$. Then
	\[ W^0_p(\mu,\nu)=|x-y|,\qquad W^\flat_p(\mu,\nu)=W^\sharp_p(\mu,\nu)=W_p'(\mu,\nu)=\min\{|x-y|,2-|x-y|\}.\]
\end{example}

\begin{remark} \label{rem:W'_FigalliGigli}
 One could equally well define 
 	\[ W_p''(\mu,\nu) := \inf \{ W_p(\check{\mu},\check{\nu}) \,\big|\, \check{\mu},\check{\nu}\in \mathcal M(Y'), \check{\mu}|_Y=\mu, \check{\nu}|_Y=\nu \}. \]
 For $p=1$ the metrics $W_1'$ and $W_1''$ coincide, but for $p>1$ this is no longer true. Take for instance $X=\mr$, $Y=(-3,3)$ and $\mu=\delta_{-2},\nu=\delta_2$. Then 
 $W_p'(\mu,\nu)^p=d'(-2,2)^p=2^p$ whereas $W_p''(\mu,\nu)^p
 \le d'(-2,\partial)^p+d'(2,\partial)^p=2$. \newline
 The metric $W_2''$ coincides with Figalli \& Gigli's metric $Wb_2$ \cite{Figalli_Gigli}.
\end{remark}

From now on until the end of this subsection  assume that {\bf $(X,d)$ is a {length} space.} 

\medskip

Quite intuitive characterizations of $W^0_p(\mu,\nu)$, $W^\sharp_p(\mu,\nu)$, and $W'_p(\mu,\nu)$ are possible   in terms of  $L^p$-transportation costs and and  $L^p$-annihilation costs.

\begin{lemma} \label{lem:rep_W0_1}
i) For all $\mu,\nu\in\mathcal P^{sub}_1(Y)$
  \begin{align*} 
    {W^0_1}(\mu,\nu) = \inf \Big\{ & W_1(\mu_1,\nu_1) + W_1^*(\mu_0) + W^*_1(\nu_0) \,\Big|\, \\
      & \mu=\mu_1+\mu_0, \nu=\nu_1+\nu_0, (\mu+\nu_0)(X)\leq 1, (\nu+\mu_0)(X)\leq 1 \Big\}.
  \end{align*}
ii) More generally for all $p\ge1$ and $\mu,\nu\in\mathcal P^{sub}_p(Y)$
  \begin{align*} 
    {W^0_p}(\mu,\nu)^p \le \inf \Big\{ & W_p(\mu_1,\nu_1)^p + W_p^*(\mu_0)^p + W^*_p(\nu_0)^p \,\Big|\, \\
      & \mu=\mu_1+\mu_0, \nu=\nu_1+\nu_0, (\mu+\nu_0)(X)\leq 1, (\nu+\mu_0)(X)\leq 1 \Big\}.
  \end{align*}
iii) For all $\mu,\nu\in\mathcal P^{sub}_1(Y)$
  \begin{align*} 
    W_1^\sharp(\mu,\nu) = \inf \Big\{ & W_1(\mu_1,\nu_1) + W_1^*(\mu_0) + W^*_1(\nu_0) \,\Big|\,        \mu=\mu_1+\mu_0, \nu=\nu_1+\nu_0 \Big\}.
  \end{align*}
iv) For all $\mu,\nu\in\mathcal P^{sub}_p(Y)$
  \begin{align} \nonumber
    W'_p(\mu,\nu)^p &= \inf \Big\{  W_p(\mu_1,\nu_1)^p + W^\dagger_p(\mu_2,\nu_2)^p+ W'_p(\mu_0,0)^p + W'_p(\nu_0,0)^p \,\Big|\\
    &\qquad\qquad  \mu=\mu_1+\mu_2+\mu_0, \ \nu=\nu_1+\nu_2+\nu_0,\  (\mu+\nu_0)(Y)\le1, \ (\nu+\mu_0)(Y)\le1\Big\}
    \label{W'-decompo}
  \end{align}
  where  
  $W'_p(\mu_0,0)^p = \int_Y d'(x,\partial)^p \dd \mu_0(x)$ with 0 denoting the subprobability measure with vanishing total mass.

In the case $p=1$, contributions from the term $W^\dagger_p(\mu_2,\nu_2)^p$ can be avoided, in other words, one can always choose $\mu_2=\nu_2=0$.
\end{lemma}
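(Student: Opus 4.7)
Plan. The plan is to prove the four parts in the order (iv)$\to$(i)$\to$(ii)$\to$(iii); parts (i) and (ii) would be obtained by constructing explicit admissible charged couplings, (iv) by partitioning an optimal Wasserstein coupling on $(Y',d')$, and (iii) by chaining per-hop bounds from (i).

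For (iv), I would extend $\mu,\nu$ canonically to probabilities $\mu',\nu'\in\mathcal P(Y')$, pick an optimal $q'\in\Cpl(\mu',\nu')$, and partition it into four pieces indexed by $\{Y,\{\partial\}\}^2$. The pieces $q'|_{Y\times\{\partial\}}$ and $q'|_{\{\partial\}\times Y}$ produce the marginals $\mu_0$ and $\nu_0$, contributing $W_p'(\mu_0,0)^p+W_p'(\nu_0,0)^p$; the constraints $(\mu+\nu_0)(Y)\le 1$ and $(\nu+\mu_0)(Y)\le 1$ follow at once from $\nu_0(Y)=q'(\{\partial\}\times Y)\le\mu'(\{\partial\})=1-\mu(Y)$ and symmetrically. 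A Borel-measurable split of $q'|_{Y\times Y}$ into the set $\{d\le d^\dagger\}$ (where $d'=d$) and its complement (where $d'=d^\dagger$) would produce $(\mu_1,\nu_1)$ and $(\mu_2,\nu_2)$, completing the four-term identity. The reverse inequality stitches together partial couplings built from any admissible decomposition. For $p=1$, the additivity $d^\dagger(x,y)=d(x,\partial)+d(y,\partial)$ turns $W_1^\dagger(\mu_2,\nu_2)$ into $W_1'(\mu_2,0)+W_1'(\nu_2,0)$, so the $\mu_2,\nu_2$-terms fold into $\mu_0,\nu_0$; the mass constraint remains consistent because $\mu_1(Y)=\nu_1(Y)$ pins the residual $\partial$-mass.

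For (i) and (ii), given an admissible decomposition I would build $\sigma=(\mu+\nu_0+\lambda,\ \nu_0+\lambda)$ and $\tau=(\nu+\mu_0+\lambda',\ \mu_0+\lambda')$ with $\lambda,\lambda'\in\mathcal P^{sub}(X\setminus Y)$ chosen (via Remark~\ref{remark}(d)) to normalize totals; the hypothesis $(\mu+\nu_0)(X)\le 1$ is exactly what makes embedding $\nu_0$ into both $\sigma^+$ and $\sigma^-$ feasible. The coupling of $\bar\sigma,\bar\tau$ combines an optimal $W_p$-coupling $\mu_1\to\nu_1$, the optimal $W_p^*$-coupling of $\mu_0$ with itself linking $\sigma^+$ to $\tau^-$, the symmetric one for $\nu_0$, and diagonal couplings on the boundary-supported $\lambda,\lambda'$ (where $d^*\equiv 0$). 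Under the density factorization $dq^{ij}=u^i(x)v^j(y)\,dq$, each annihilation transport would split between $q^{++}$ (with cost $d$) and $q^{+-}$ (with cost $d^*$); the factor $\tfrac12$ in $W_p^*(\mu):=\tfrac12 W_p^*(\mu,\mu)$ absorbs the $v^+=v^-=\tfrac12$ factor, recombining the contribution into exactly $W_p^*(\mu_0)^p+W_p^*(\nu_0)^p$. For the matching lower bound in (i), I would invoke the isometry $\tilde{\mathcal P}_1(Y|X)\cong\mathcal P_1(\hat X)$ promised in Section~\ref{subsec:identification} to read off an admissible decomposition from any near-optimal $\hat W_1$-coupling on the glued space.

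For (iii), using $W_1^\sharp=W_1^\flat$ (as promised by the remark after Definition~\ref{def:W0_length}), the upper bound follows from the three-hop chain $\mu\to\mu_1\to\nu_1\to\nu$: each hop satisfies its mass constraint automatically (e.g., $\mu\to\mu_1$ uses the decomposition $\mu=\mu_1+\mu_0$ and $\mu_1=\mu_1+0$, reducing $(\mu+0)(X)\le 1$ to $\mu(X)\le 1$), and (i) applied hop-wise yields $W_1^0(\mu,\mu_1)\le W_1^*(\mu_0)$ together with the analogous bounds for the other two hops, summing to the RHS. For the reverse inequality, I would unpack: starting from any chain realizing $W_1^\flat$, apply (i) on each link and merge the per-link decompositions into a global $(\mu_1,\mu_0),(\nu_1,\nu_0)$; this works because each intermediate $\eta_k$ inherits two splittings (as target of hop $k$ and source of hop $k+1$) whose reconciliation does not require the global mass constraint. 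The hard part will be the $\tfrac12$-bookkeeping in (i)/(ii) aligning the definition of $W_p^*$ with the density factorization of the $q^{\pm\pm}$ pieces of the $\tilde W_p$-cost, together with the Borel-measurable selection in (iv) when $d=d^\dagger$ on a set of positive $q'$-measure.
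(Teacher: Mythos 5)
Your treatment of (iv) and of the upper bound in (iii) is essentially sound and close to the paper's route (the paper calls (iv) ``straightforward'' and derives (iii) from (iv), Lemma~\ref{lem:Wstern_rand} and Theorem~\ref{thm:W_sharp-W_prime}(i)), but the core of the lemma --- parts (i) and (ii) --- has genuine gaps. First, your charged measures are infeasible as written: with $\sigma=(\mu+\nu_0+\lambda,\,\nu_0+\lambda)$ the normalization requires $(\mu+2\nu_0)(X)\le 1$, not the hypothesis $(\mu+\nu_0)(X)\le 1$ (take $\mu(X)=0.6$, $\nu_0(X)=0.3$), and $\overline\sigma$ then contains $2\nu_0$ while $\overline\tau$ contains only one copy of $\nu_0$, so your proposed coupling does not have the correct marginals. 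The correct paddings are $\rho=\tfrac12\nu_0+\lambda$ and $\eta=\tfrac12\mu_0+\lambda'$. Second, and more seriously, the claimed ``exact recombination'' into $W_p^*(\mu_0)^p+W_p^*(\nu_0)^p$ is false for $p>1$: any plan that annihilates $\mu_0$ by a single $d^*$-transport of half its mass --- which is exactly what the $v^+=v^-=\tfrac12$ factorization produces --- costs $\tfrac12 W_p^*(\mu_0,\mu_0)^p=2^{p-1}W_p^*(\mu_0)^p$, which exceeds $W_p^*(\mu_0)^p$ unless $p=1$. This is precisely why the paper's proof of (ii) for $p>1$ takes a $\tilde W_p$-geodesic from $(\mu_0,0)$ to $(0,\mu_0)$, splits it into sub-geodesics of half the length and routes each unit of mass through the midpoint in two legs of length $\tfrac12 d^*$, recovering the factor $2^{-p}$; your flat construction cannot achieve this, and this is also where the standing length-space hypothesis enters.

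Third, the lower bound in (i) --- the hardest part of the lemma --- is not addressed by ``read off an admissible decomposition from a near-optimal $\hat W_1$-coupling''. The difficulty is that an optimal coupling on $\hat X$ of $\mu+\rho+\rho'$ with $\nu+\eta+\eta'$ may route mass through the auxiliary measures $\rho,\eta$ in long alternating chains between the two sheets, and one must show that the total cost of each such chain dominates either a direct $d$-transport between pieces of $\mu$ and $\nu$ or a $d^*$-annihilation cost of a piece of $\mu$ or of $\nu$ alone. The paper does this by discretizing to rational-mass Dirac sums, replacing the coupling by a matching, and classifying maximal chains into six cases; some argument of this kind is indispensable and is entirely absent from your plan. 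Finally, a smaller point on (iv): folding $\mu_2,\nu_2$ into $\mu_0,\nu_0$ for $p=1$ can violate the stated constraint, since one would need $(\mu+\nu_0+\nu_2)(Y)\le 1$ (this already fails for probability measures $\mu,\nu$ whose optimal $d'$-plan crosses the boundary); the correct $p=1$ statement, and the one actually used later in the paper, drops the mass constraints altogether.
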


\begin{lemma} \label{lem:Wstern_rand}
For all $p\ge 1$ and all $\mu\in \mathcal P_p(Y)$
$$2^{-1+1/p}\, W'_p(\mu,0)\le W^*_p(\mu)\le W'_p(\mu,0)=\inf \big\{ W_p(\mu,\xi) \,\big|\, \ \xi\in \mathcal P( \partial Y)\big\}.$$

In particular,
$W^*_1(\mu)=W'_1(\mu,0)$. More generally,
for all $\mu,\nu\in \mathcal P_1(Y)$
$$W^*_1(\mu,\nu)=\inf \Big\{ W_1(\mu,\xi)+W_1(\xi,\nu) \,\big|\, \ \xi\in \mathcal P( \partial Y)\Big\}.$$
\end{lemma}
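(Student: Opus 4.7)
The plan is to establish the two-sided chain $2^{-1+1/p}\,W'_p(\mu,0) \le W^*_p(\mu) \le W'_p(\mu,0)$, identify the latter with an infimum over boundary measures, and finally derive the pivot formula for $W^*_1(\mu,\nu)$; the $p=1$ equality $W^*_1(\mu) = W'_1(\mu,0)$ then follows as an immediate corollary of the sandwich bound, since $2^{-1+1/p} = 1$ at $p=1$.

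For the upper bound I insert the diagonal coupling $q = (\id,\id)_*\mu$ into the definition of $W^*_p(\mu,\mu)$: since $d^*(x,x) = 2\,d'(x,\partial)$ directly from the definitions, this gives $W^*_p(\mu,\mu)^p \le 2^p\,W'_p(\mu,0)^p$, hence $W^*_p(\mu) \le W'_p(\mu,0)$ after halving. For the lower bound I use the pointwise estimate $d^*(x,y) \ge d'(x,\partial) + d'(y,\partial)$, which is trivial since every $z \in X\setminus Y$ satisfies $d(x,z) \ge d'(x,\partial)$ and $d(z,y) \ge d'(y,\partial)$. Combined with the elementary inequality $(a+b)^p \ge a^p + b^p$ (valid for $a,b \ge 0$, $p \ge 1$) and integration against any self-coupling of $\mu$, this yields $\int d^*(x,y)^p\,\dd q \ge 2\,W'_p(\mu,0)^p$, and dividing by $2^p$ closes the lower bound. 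The identification $W'_p(\mu,0) = \inf\{W_p(\mu,\xi) \mid \xi \in \mathcal P(\partial Y)\}$ follows similarly: the `$\ge$' side is immediate from $d(x,y) \ge d'(x,\partial)$ when $y \in \partial Y$, and the reverse uses a Jankov--von Neumann measurable selection of a Borel map $T_\epsilon : Y \to \partial Y$ with $d(x, T_\epsilon(x)) \le d'(x,\partial) + \epsilon$ (the length-space hypothesis ensuring that $d'(x,\partial) = \inf_{z \in \partial Y} d(x,z)$), followed by setting $\xi := (T_\epsilon)_*\mu$ and letting $\epsilon \downarrow 0$.

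The pivot formula $W^*_1(\mu,\nu) = \inf_\xi[W_1(\mu,\xi) + W_1(\xi,\nu)]$ splits in two. For `$\le$' I apply the gluing lemma: given $\xi \in \mathcal P(\partial Y)$ and optimal $q_1 \in \Cpl(\mu,\xi)$, $q_2 \in \Cpl(\xi,\nu)$, produce $Q \in \mathcal P(X^3)$ with $(x,z)$- and $(z,y)$-marginals $q_1, q_2$, so that its $(x,y)$-projection lies in $\Cpl(\mu,\nu)$ and has $d^*$-cost bounded by $\int [d(x,z) + d(z,y)]\,\dd Q = W_1(\mu,\xi) + W_1(\xi,\nu)$. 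For `$\ge$' I first upgrade the definition of $d^*$ using the length-space hypothesis: a nearly length-minimising curve from $x$ to any $z^* \in X \setminus Y$ must cross $\partial Y$ at some first-entry point, and length additivity shows this boundary point gives essentially the same cost, hence $d^*(x,y) = \inf_{z \in \partial Y}[d(x,z) + d(z,y)]$. A measurable selection $z_\epsilon(x,y) \in \partial Y$ nearly realising this infimum, applied to an optimal $q \in \Cpl(\mu,\nu)$ for $W^*_1$, yields $\xi := (z_\epsilon)_*q \in \mathcal P(\partial Y)$ together with transport plans $(\id,z_\epsilon)_*q \in \Cpl(\mu,\xi)$ and $(z_\epsilon,\id)_*q \in \Cpl(\xi,\nu)$ of combined cost at most $W^*_1(\mu,\nu) + \epsilon$.

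The main obstacle is the length-space reduction of the infimum defining $d^*$ from $X\setminus Y$ to $\partial Y$: without this, an optimal intermediate point could lie deep in the interior of $X\setminus Y$ and no boundary measure would be able to act as a pivot. Once that reduction is secured via approximate geodesics and their first-entry times into $X\setminus Y$, the remaining measurable-selection and gluing arguments are routine.
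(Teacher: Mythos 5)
Your proof is correct and follows essentially the same route as the paper: the superadditivity $(a+b)^p\ge a^p+b^p$ for the lower bound, first-crossing points of (near-)geodesics through $\partial Y$ plus a measurable selection for the boundary-pivot identities, and additivity of the $L^1$-cost for the final formula (the paper merely phrases the pivot argument in the doubled space $\hat X$, where $d^*$ becomes the genuine distance between the two sheets, rather than directly in $X$). Your one genuine shortcut is the diagonal coupling with $d^*(x,x)=2d'(x,\partial)$ for the upper bound $W_p^*(\mu)\le W_p'(\mu,0)$, which is slightly more direct than the paper's route via the triangle inequality for $d^*$ and the characterization of $W_p'(\mu,0)$ as $\inf_\xi W_p(\mu,\xi)$.
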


\begin{remark} In general, $W^*_p(\mu)$ and $W'_p(\mu,0)$ will not coincide. Our lower bound for $W^*_p(\mu)/W'_p(\mu,0)$ is sharp.

For instance, let $Y=(0,2)\subset X=\R$ and $\mu=\frac12(\delta_1+\delta_\eps)$ for some $\eps\in(0,1)$. Then $W'_p(\mu,0)^p=\frac12(1+\eps^p)$ whereas $W^*_p(\mu)^p=\big(\frac{1+\eps}2\big)^p$. Thus for $\eps$ sufficiently small,
$W^*_p(\mu)/W'_p(\mu,0)$ is arbitrarily close to $2^{-1+1/p}$.
\end{remark}

\begin{theorem} \label{thm:W_sharp-W_prime}
i)
 For all $\mu,\nu\in\mathcal P^{sub}_1(Y)$
$$ W_1^\flat(\mu,\nu) = W_1^\sharp(\mu, \nu)=W_1'(\mu, \nu).$$

ii) More generally, for all $p\ge1$ and  all $\mu,\nu\in\mathcal P^{sub}_p(Y)$
$$W_1'(\mu, \nu)\le W_p^\flat(\mu,\nu)\le
W^\sharp_p(\mu,\nu)\le W'_p(\mu,\nu).$$
\end{theorem}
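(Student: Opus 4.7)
The argument splits naturally into the lower bound $W_1'(\mu',\nu')\le W_p^\flat(\mu,\nu)$ and the upper bound $W_p^\sharp(\mu,\nu)\le W_p'(\mu',\nu')$; the middle inequality $W_p^\flat\le W_p^\sharp$ in (ii) is automatic, since every length pseudo-metric dominates the pseudo-metric that induces it, and (i) then follows from the $p=1$ sandwich.

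\textbf{Lower bound.} My plan is to first establish the pointwise estimate $W_1'(\mu',\nu')\le W_1^0(\mu,\nu)\le W_p^0(\mu,\nu)$. The right inequality is the monotonicity of $L^p$-means applied inside the $\tilde W_p$-formula: for any fixed probability coupling, the $L^1$-cost is dominated by the $L^p$-cost. For the left inequality I take a near-optimal decomposition $\mu=\mu_1+\mu_0$, $\nu=\nu_1+\nu_0$ for $W_1^0$ from Lemma~\ref{lem:rep_W0_1}(i) and read it as a valid $W_1'$-decomposition in Lemma~\ref{lem:rep_W0_1}(iv) with $\mu_2=\nu_2=0$, which is permitted at $p=1$ by the last sentence of that lemma; the cost terms coincide via $W_1^*(\cdot)=W_1'(\cdot,0)$ from Lemma~\ref{lem:Wstern_rand}, and the mass constraints in (i) and (iv) match verbatim. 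Given the pointwise estimate, the triangle inequality in $W_1'$ then yields, for any chain $\eta_0=\mu,\ldots,\eta_n=\nu$ in $\mathcal P_p^{sub}(Y)$,
\[W_1'(\mu',\nu')\le\sum_{i=1}^n W_1'(\eta_{i-1}',\eta_i')\le\sum_{i=1}^n W_p^0(\eta_{i-1},\eta_i),\]
and infimising over chains concludes this direction.

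\textbf{Upper bound.} I will establish the pointwise bound $W_p^\flat(\mu,\nu)\le W_p'(\mu',\nu')$; granted this, for any $\epsilon>0$ I pick a curve $(\mu_s')_{s\in[0,1]}$ in $\mathcal P_p(Y')$ connecting $\mu'$ and $\nu'$ of $W_p'$-length at most $W_p'(\mu',\nu')+\epsilon$ (available because $(\mathcal P_p(Y'),W_p')$ is a length space) and set $\eta_s:=\mu_s'|_Y$. The pointwise bound renders $\eta_\cdot$ a $W_p^\flat$-continuous curve and, via $\sum W_p^\flat(\eta_{s_{i-1}},\eta_{s_i})\le\sum W_p'(\mu_{s_{i-1}}',\mu_{s_i}')$ for every partition, forces $L_p^\flat(\eta)$ to be at most the $W_p'$-length of $(\mu_s')$, hence at most $W_p'(\mu',\nu')+\epsilon$; thus $W_p^\sharp(\mu,\nu)\le L_p^\flat(\eta)\le W_p'(\mu',\nu')+\epsilon$ and $\epsilon\downarrow 0$ finishes. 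To prove the pointwise bound itself, I employ the same ingredients in reverse: fix $\epsilon>0$ and a near-geodesic $(\mu_s')$, partition $[0,1]$ with mesh $\Delta$, and for each interval $[s_{i-1},s_i]$ split the $W_p'$-optimal coupling $q_i'$ into an \emph{interior} piece ($q_i'|_{Y\times Y}$ with $d\le d^\dagger$), a \emph{through-$\partial$} piece ($q_i'|_{Y\times Y}$ with $d^\dagger<d$), and \emph{to-$\partial$} / \emph{from-$\partial$} pieces. A three-step sub-chain $\eta_{s_{i-1}}\to\xi_{i-1}\to\xi_i\to\eta_{s_i}$ annihilates the to-$\partial$-plus-through-source mass, Wasserstein-transports the interior mass inside $Y$, and creates the from-$\partial$-plus-through-target mass; the mass constraints of Lemma~\ref{lem:rep_W0_1}(ii) reduce at each step to $\sigma(Y)\le 1$ and are thus automatic, and combining Lemma~\ref{lem:rep_W0_1}(ii), Lemma~\ref{lem:Wstern_rand}, and $(a+b)^p\ge a^p+b^p$ applied to each through-$\partial$ pair gives $A_i^p+B_i^p+C_i^p\le W_p'(\mu_{s_{i-1}}',\mu_{s_i}')^p$ for the three sub-chain costs.

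\textbf{Main obstacle.} The three-step estimate only controls the $L^p$-combination $A_i^p+B_i^p+C_i^p$, while the chain inequality $W_p^\flat(\eta_{s_{i-1}},\eta_{s_i})\le A_i+B_i+C_i$ is an ordinary $L^1$-sum, so at a single scale the inequality $W_p^\flat(\eta_{s_{i-1}},\eta_{s_i})\le W_p'(\mu_{s_{i-1}}',\mu_{s_i}')$ does not drop out for $p>1$. My resolution is asymptotic. Each particle contributing to $\alpha_i$, the boundary-bound source mass, must reach $\partial$ within time $\Delta$, so $d'(x,\partial)\le\Delta\,v_{\text{particle}}$ and $A_i\le\Delta\bigl(\int v_{\text{particle}}^p\,d\alpha_i\bigr)^{1/p}$; Hölder's inequality across the $n=1/\Delta$ intervals then yields $\sum_i A_i\le\Delta^{1/p}(W_p'(\mu',\nu')+\epsilon)\to 0$ as $\Delta\downarrow 0$, and $\sum_i C_i\to 0$ symmetrically. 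Meanwhile $B_i\le W_p'(\mu_{s_{i-1}}',\mu_{s_i}')$ gives $\sum_i B_i\le$ $W_p'$-length of $(\mu_s')\le W_p'(\mu',\nu')+\epsilon$. Chaining yields $W_p^\flat(\mu,\nu)\le\sum_i(A_i+B_i+C_i)\le W_p'(\mu',\nu')+\epsilon+O(\Delta^{1/p})$, and sending first $\Delta\downarrow 0$ and then $\epsilon\downarrow 0$ delivers the pointwise bound. Making the per-interval Hölder estimate rigorous for general (not merely smooth) near-geodesics is the main technical labour I anticipate.
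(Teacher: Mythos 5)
Your proposal is correct in substance and arrives at the same sandwich $W_1'\le W_p^\flat\le W_p^\sharp\le W_p'$, but the route to the key upper bound $W_p^\sharp\le W_p'$ is genuinely different from the paper's. The paper uses a \emph{single} $W_p^0$-link per small time interval along a $W_p'$-geodesic: it takes the representation \eqref{W'-decompo}, absorbs the $W_p^\dagger$-term into boundary annihilation via $d^\dagger(x,y)^p\ge d'(x,\partial)^p+d'(y,\partial)^p$, and checks that the resulting decomposition is \emph{admissible} in Lemma \ref{lem:rep_W0} -- which forces a preliminary perturbation of $\mu,\nu$ so that $\eta_t(Y)\le 1-\eps$ uniformly, leaving room for the mass constraints on a small scale. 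Your three-step chain (annihilate / transport inside $Y$ / create) sidesteps the mass-constraint issue entirely, since each link involves one-sided annihilation of a sub-measure of one endpoint; the price is the $\ell^1$-versus-$\ell^p$ mismatch between $A_i+B_i+C_i$ and $\bigl(A_i^p+B_i^p+C_i^p\bigr)^{1/p}$, which you correctly kill in the limit: a constant-speed $d'$-geodesic touches $\partial$ at most once, so the families of geodesics annihilated in disjoint time intervals are disjoint, $\sum_i A_i^p\le \Delta^p\int v^p\,d\Pi$, and H\"older gives $\sum_i A_i\le \Delta^{1/p}(W_p'(\mu,\nu)+\eps)\to 0$. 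Both arguments need the same structural input (the lift $\Pi$ to geodesics and the speed bound $L$) and both defer non-geodesic length spaces to an approximation argument, so your acknowledged technical debt matches the paper's. Two further genuine differences: you deduce (i) from the $p=1$ case of (ii), whereas the paper proves (i) separately by an explicit curve that collapses the through-$\partial$ part of the $W_1'$-geodesic onto $\delta_\partial$ and re-expands it; and your lower bound $W_1'\le W_1^0\le W_p^0$ followed by chaining is self-contained, whereas the paper routes the lower bound of (ii) through assertion (i). The lower-bound mechanism itself (matching the decompositions of Lemma \ref{lem:rep_W0_1}(i) and (iv) via Lemma \ref{lem:Wstern_rand}) is the same as the paper's.
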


\begin{example} 
	Let $X=\R, Y=(-2,2), \mu=\frac1{2n+1}\delta_{-1/2}, \nu=\frac1{2n+1}\delta_{+1/2}$ for $n\in \N$. Then
	\[ W'_p(\mu,\nu)^p=W_p(\mu,\nu)^p=\frac1{2n+1}. \]
	Taking 
	\[ \sigma:= \left(	\frac{1}{2n+1} \sum_{k=0}^n \delta_{\frac{2k}{2n+1}-\frac{1}{2}}, \frac{1}{2n+1} \sum_{k=1}^n \delta_{\frac{2k}{2n+1}-\frac{1}{2}} \right) \] 
	and 
	\[ \tau:= \left( \frac{1}{2n+1} \sum_{k=0}^{n} \delta_{\frac{2k+1}{2n+1}-\frac{1}{2}}, \frac{1}{2n+1} \sum_{k=0}^{n-1} \delta_{\frac{2k+1}{2n+1}-\frac{1}{2}} \right), \] 
	we see that
	\[ W_p^0(\mu,\nu)^p \leq \tilde W_p (\sigma,\tau)^p = \left( \frac{1}{2n+1} \right)^p, \]
	so that 
	\[ W_p^\flat(\mu,\nu) \leq W_p^0(\mu,\nu) \leq \left( \frac{1}{2n+1} \right) < \left( \frac{1}{2n+1} \right)^{\frac{1}{p}} = W_p'(\mu,\nu), \]
	for $p>1$, $n\geq 1$.
	In particular, the lower estimate for $W^\flat_p$ in assertion ii) of the previous Theorem is sharp.
\end{example}


A useful feature of $W_p^\sharp$ is that it metrizes vague convergence of subprobability measures.
\begin{proposition} \label{cor:W_sharp_vague_conv}
Assume that $X$ is a compact geodesic space.
Then for every $p\in[1,\infty)$, 
 $W^\sharp_p$ is a complete, separable, geodesic metric on  $\mathcal P^{sub}_p(Y)$ and for $\mu_n,\mu \in \mathcal P^{sub}_p(Y)$ the following are equivalent:
 \begin{itemize}
 \item[(i)] $\mu_n\to\mu$ vaguely on $Y$.
 \item[(ii)] $W^\sharp_p(\mu_n,\mu)\to 0$ as $n\to\infty$ 
 \end{itemize} 
\end{proposition}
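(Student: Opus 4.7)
The plan is to deduce every assertion from the sandwich
\[ W'_1(\mu,\nu)\le W^\sharp_p(\mu,\nu)\le W'_p(\mu,\nu) \]
of Theorem \ref{thm:W_sharp-W_prime}(ii), combined with standard facts about the Kantorovich-Wasserstein distance on the compact metric space $(Y',d')$. Since $X$ is compact, so is $Y'$; moreover all distances on $Y$ are bounded, whence $\mathcal P^{sub}_p(Y)=\mathcal P^{sub}(Y)$. The extension map $\mu\mapsto\mu'$ of Definition (ii) bijects $\mathcal P^{sub}(Y)$ with $\mathcal P(Y')$, and a short approximation argument -- writing any $g\in C(Y')$ as $g(\partial)+(g-g(\partial))$ and approximating the vanishing-at-$\partial$ part by functions in $C_c(Y)$ -- shows that vague convergence $\mu_n\to\mu$ on $Y$ is equivalent to weak convergence $\mu'_n\to\mu'$ on the compact space $Y'$. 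Since $Y'$ is compact, weak convergence on $\mathcal P(Y')$ coincides with $W'_p$-convergence, and equally with $W'_1$-convergence, for every $p\in[1,\infty)$.

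With these preliminaries, the metric properties fall out quickly. The lower bound $W'_1\le W_p^\sharp$ and the fact that $W'_1$ is a genuine metric show that the a priori pseudo-metric $W_p^\sharp$ separates points; hence it is a metric. Since both sides of the sandwich induce the same (weak) topology on $\mathcal P(Y')$, any metric squeezed between them must induce it as well. This immediately yields the equivalence of (i) and (ii). It also gives completeness and separability of $(\mathcal P^{sub}_p(Y),W_p^\sharp)$: indeed $(\mathcal P(Y'),W'_p)$ is a compact Polish space, and a $W_p^\sharp$-Cauchy sequence, viewed inside this compact space, admits a convergent subsequence, whose limit is automatically the $W_p^\sharp$-limit of the whole sequence by the Cauchy property.

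For the geodesic assertion, note that by Definition \ref{def:W0_length}(iii) $W_p^\sharp$ is by construction a length metric. Since the underlying topology was just identified with the compact Polish topology of $\mathcal P(Y')$, the space $(\mathcal P^{sub}_p(Y),W_p^\sharp)$ is complete and (topologically) compact, hence in particular locally compact. The classical Hopf-Rinow-Cohn-Vossen theorem -- any complete, locally compact length metric space is geodesic -- then shows that every two points are joined by a minimising geodesic, so $W_p^\sharp$ is indeed a geodesic metric.

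The only genuinely new ingredient in this argument is the identification of vague convergence on the (typically non-compact) open set $Y$ with weak convergence on the one-point completion $Y'$; everything else is a fairly mechanical application of the sandwich from Theorem \ref{thm:W_sharp-W_prime} together with compactness of $Y'$. I therefore regard that identification -- which ultimately amounts to controlling the contribution of continuous functions near $\partial$ by an \emph{a priori} bound on total mass -- as the main (if modest) technical point of the proof.
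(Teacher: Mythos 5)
Your proof is correct and follows essentially the same route as the paper: both reduce everything to the Kantorovich--Wasserstein metric $W_p'$ on the compact space $Y'$ via Theorem \ref{thm:W_sharp-W_prime} together with the identification of vague convergence on $Y$ with weak convergence on $Y'$. The only cosmetic difference is that the paper first reduces to $p=1$ (where $W_1^\sharp=W_1'$) using the comparison $W_1^\sharp\le W_p^\sharp\le D^{1-1/p}\,(W_1^\sharp)^{1/p}$ on the bounded space, whereas you use the two-sided sandwich $W_1'\le W_p^\sharp\le W_p'$ for each $p$ directly; your write-up is in fact more explicit than the paper's about completeness, separability and the geodesic property.
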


\begin{remark} 
In particular, this implies that $\mu_n\to\mu$ weakly on $Y$ if and only if $W^\sharp_p(\mu_n,\mu)\to 0$ and $\mu_n(Y)\to \mu(Y)$.
A similar result for  $W^0_p$ can be deduced even without requiring that $X$ is geodesic, see Lemma \ref{lem:W_sharp_weak_conv}.

The implication ``(ii)$\Rightarrow$(i)''  holds true for all length spaces $X$ without requiring their compactness.
For the converse, one has to add a condition on convergence of moments, see remark following Lemma \ref{lem:W_sharp_weak_conv}.
\end{remark}

\subsection{Gradient flow perspective and transportation estimates} \label{subsec:gradient_flow_perspective}
From now on, let us be more specific. We assume that
$(X,d,\mm)$ is a metric measure space which satisfies an RCD$(K,\infty)$-condition for some number  $K\in\R$ and that $Y\subset X$ is a dense open subset with $\mm(\partial Y)=0$.  The RCD$(K,\infty)$-condition means that the metric measure space $(X,d,\mm)$  is infinitesimally Hilbertian with Ricci curvature bounded from below by $K$ in the sense of Lott-Sturm-Villani, \cite{STU06i}, \cite{LV09}.  The latter is formulated as $K$-convexity of the Boltzmann entropy $\Ent_\mm$ in $\big(\Pz_2(X), W_2\big)$.  We will additionally request that this property extends to the space of charged probability measures induced by $Y$, that is, we will request that  $(X,Y,d,\mm)$ satisfies the following:

\begin{assumption}[``Charged Lower Ricci Bound $K$''] \label{ass} The Boltzmann entropy
 \begin{eqnarray*}
 \widetilde\Ent_\mm: \quad \tild\Pz_2(Y|X)&\to& (-\infty,\infty]\\
 \sigma&\mapsto& \Ent_\mm(\sigma^+)+\Ent_\mm(\sigma^-)
 \end{eqnarray*}
 is $K$-convex in the metric space $\big(\tild\Pz_2(Y|X), \tild W_2\big)$. 
\end{assumption}

\begin{remark} 
a) Note that, due to the isometric embedding of $\Pz_2(X)$ into $\tild\Pz_2(Y|X)$, this assumption will imply the $K$-convexity of  $\Ent_\mm$ in $\big(\Pz_2(X), W_2\big)$ and thus the CD$(K,\infty)$-condition for the metric measure space $(X,d,\mm)$.

b) If $(X,d,\mm)$  is infinitesimally Hilbertian and if $\mm$ has full topological support then Assumption \ref{ass} implies that $\overline Y=X$.
Indeed, the argument from \cite{Rajala_Sturm} carries over to this framework and yields essential non-branching which in turn implies the density of $Y$ in $X$.
\end{remark}

The \emph{proofs} of the following results will be given in Section \ref{sec:proofs}. They will be based on concepts and results for gluing of metric measure spaces which will be presented in Section \ref{sec:gluing}. For the various kinds of heat flows appearing from this section on, see Subsection \ref{subsec:heat_flows}.

\begin{theorem}\label{riem} 
 Let $(M,g)$ be a complete Riemannian manifold with Ricci curvature bounded below by $K\in \mr$. Take an open, bounded, convex subset $Y\subset M$ with smooth, compact boundary. Consider the closure $X:= \overline Y$ with the Riemannian distance $d$ and the Riemannian volume measure $\mm$ obtained by restriction to $X$. Then the metric measure space $(X,d,\mm)$ satisfies the RCD$(K,\infty)$-condition and $(X,Y,d,\mm)$ satisfies Assumption \ref{ass}.
\end{theorem}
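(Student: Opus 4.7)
The plan is to pass to the doubling $\hat X$ of $X=\overline Y$ along $\partial Y$, equipped with the natural length metric $\hat d$ inherited from $d$ and the doubled measure $\hat\mm$ obtained by summing the two copies of $\mm$. Since $\partial Y$ is smooth and $Y$ is convex in $M$, the doubling carries a $C^{1,1}$-Riemannian structure: away from $\partial Y$ it is locally isometric to $(M,g)$, and across $\partial Y$ the two copies of the second fundamental form have opposite normal and cancel, so that the glued metric is $C^{1,1}$ and the Ricci tensor is bounded below by $K$ in the distributional sense. The convexity hypothesis enters precisely here --- without it, a concentrated boundary contribution of the wrong sign would obstruct a Ricci lower bound at the gluing locus.

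The first step is to invoke the doubling/gluing theorem for RCD spaces to be established in Section \ref{sec:gluing}: applied to this situation it yields that $(\hat X,\hat d,\hat \mm)$ satisfies RCD$(K,\infty)$, and in particular that $\Ent_{\hat\mm}$ is $K$-convex along $W_2$-geodesics in $\Pz_2(\hat X)$. Next, one uses the isometric identification $\Pz_2(\hat X)\cong \tild\Pz_2(Y|X)$ from Section \ref{subsec:identification}. Under this identification a probability $\hat\mu\in\Pz_2(\hat X)$ with restrictions $\mu^+,\mu^-$ to the two sheets corresponds to $\sigma=(\mu^+,\mu^-)\in\tild\Pz_2(Y|X)$, and the entropies match:
\[ \Ent_{\hat\mm}(\hat\mu)=\Ent_\mm(\mu^+)+\Ent_\mm(\mu^-)=\widetilde\Ent_\mm(\sigma). \]
Hence the $K$-convexity of $\Ent_{\hat\mm}$ transfers verbatim to $K$-convexity of $\widetilde\Ent_\mm$ along $\tild W_2$-geodesics, which is Assumption \ref{ass}.

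For the remaining statement that $(X,d,\mm)$ itself is RCD$(K,\infty)$, convexity of $Y$ in $M$ ensures that every minimizing geodesic in $M$ between two points of $\overline Y$ remains in $\overline Y$, so $(X,d)$ embeds as a totally geodesic subset of $(M,g)$. Displacement convexity of $\Ent_\mm$ on $\Pz_2(X)$ then follows by restriction from the corresponding statement on $\Pz_2(M)$, and infinitesimal Hilbertianness is inherited from $M$ because the Cheeger energy is local and $\mm(\partial Y)=0$. Alternatively, this is already contained in Assumption \ref{ass} via the isometric embedding $\Pz_2(X)\hookrightarrow\tild\Pz_2(Y|X)$ noted in the introduction.

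The main obstacle is thus the doubling theorem itself: verifying that the synthetic Ricci lower bound really does survive the gluing across the boundary. Once the $C^{1,1}$-Riemannian description of $\hat X$ is in place, this can in principle be reduced to RCD stability for a smooth approximation (slightly rounding the metric near $\partial Y$ while keeping Ric $\geq K-\eps$ in the classical sense) or checked directly via the entropic curvature-dimension inequality exploiting convexity of $Y$ to build $W_2$-geodesics on $\hat X$ that do not ``see'' the gluing. This is precisely the content to be developed in Section \ref{sec:gluing}; the Riemannian theorem then follows by combining it with the identification of Section \ref{subsec:identification}.
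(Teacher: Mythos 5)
Your overall route is the same as the paper's: show that the doubling $\hat X$ of $\overline Y$ along $\partial Y$ is an $\rcd{K}{\infty}$ space, transfer this to Assumption \ref{ass} via the entropy identification of Lemma \ref{lem:ass=ent_hat_convex}, and obtain the RCD property of $(X,d,\mm)$ itself from convexity of $Y$ in $M$. Two points need correction, though. First, the glued metric is in general only Lipschitz, not $C^{1,1}$: the two copies of the second fundamental form do \emph{not} cancel across the seam (double a disk in $\R^2$ and observe the crease along the circle); the metric is $C^1$ across $\partial Y$ only when the boundary is totally geodesic. What actually happens is that the jump of the second fundamental form across the seam equals $2\,\mathrm{II}$, and convexity guarantees that the resulting singular curvature contribution concentrated on $\partial Y$ has the favourable sign --- your own subsequent sentence about a ``concentrated boundary contribution'' is the correct picture and contradicts the cancellation claim. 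Second, there is no general ``doubling/gluing theorem for RCD spaces'' in Section \ref{sec:gluing} to invoke: that section only identifies the metric measure structure, the Cheeger energy and the heat semigroup of $\hat X$, and says nothing about curvature. The curvature statement is specific to the Riemannian setting and is proved separately (Theorem \ref{thm:doublemanifold}) by exactly the smoothing strategy you offer as a fallback: Schlichting's approximation of the glued metric by smooth metrics $\hat g_\eps$ with $\ric_{\hat g_\eps}\ge K-\eps$, combined with stability of the curvature-dimension condition under measured Gromov--Hausdorff convergence. With these two corrections your argument coincides with the paper's.
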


 \begin{proposition}\label{evi}  
 Assume that Assumption \ref{ass} holds.
 \begin{itemize}
 \item[i)] For each $\sigma_0\in\tild\Pz_2(Y|X)$, there exists a unique $\evik$-gradient flow $(\sigma_t)_{t>0}$  for the Boltzmann entropy $\widetilde\Ent_\mm$ in $\big(\tild\Pz_2(Y|X), \tild W_2\big)$.

 \item[ii)] For each $\mu_0\in\Pz_2^{sub}(Y)$,
 the heat flow $(\mu_t)_{t>0}$ on $Y$ with Dirichlet boundary conditions is obtained as the effective flow 
 $$\mu_t=\sigma^+_t-\sigma^-_t$$
 where $(\sigma_t)_{t>0}$ is the $\evik$-flow as above starting in any $\sigma_0\in\tild\Pz_2(Y|X)$ with $\mu_0=\sigma^+_0-\sigma^-_0$.
 
 \item[iii)]
 For each $\nu_0\in\Pz_2(X)$,
 the heat flow $(\nu_t)_{t>0}$ on $X$ 
 is obtained as the total flow 
 $$\nu_t=\sigma^+_t+\sigma^-_t$$
 where $(\sigma_t)_{t>0}$ is the $\evik$-flow as above starting in 
 any $\sigma_0\in\tild\Pz_2(Y|X)$ with $\nu_0=\sigma^+_0+\sigma^-_0$.
  
 \item[iv)] For each $\sigma_0\in\tild\Pz_2(Y|X)$, the $\evik$-flow $(\sigma_t)_{t>0}$ from i) can be characterized as
 $$\sigma_t=\Big( \frac{\nu_t+\mu_t}2,  \frac{\nu_t-\mu_t}2\Big)$$
 where $(\nu_t)_{t>0}$ will denote the heat flow on $X$ 
 starting in $\nu_0=\sigma^+_0+\sigma^-_0$ and 
 $(\mu_t)_{t>0}$ will denote the heat flow on $Y$ with Dirichlet boundary conditions starting in $\mu_0=\sigma^+_0-\sigma^-_0$.
 \end{itemize}
 \end{proposition}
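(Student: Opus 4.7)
The plan is to lift the problem to the doubled metric measure space $\hat X$ obtained by gluing two copies of $X$ along $X\setminus Y$, a construction to be developed in Section \ref{sec:gluing}. The gluing should furnish a canonical isometric bijection $\Phi:\tild\Pz_2(Y|X)\to \Pz_2(\hat X)$ sending a charged probability $\sigma=(\sigma^+,\sigma^-)$ to the probability measure on $\hat X$ whose restrictions to the two sheets are $\sigma^+$ and $\sigma^-$, and satisfying the entropy identity $\widetilde\Ent_\mm(\sigma)=\Ent_{\hat\mm}(\Phi(\sigma))$ for the natural doubled reference measure $\hat\mm$. Under this identification, Assumption \ref{ass} is precisely the $\CD(K,\infty)$-condition on $\hat X$; combined with infinitesimal Hilbertianity (inherited from $X$ by the gluing), this yields an $\rcd{K}{\infty}$-structure on $\hat X$.

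Part (i) is then a direct application of the Ambrosio--Gigli--Savar\'e theory: on any $\rcd{K}{\infty}$-space the entropy admits a unique $\evik$-gradient flow starting from any initial datum in $\Pz_2$. Pulling the flow $(\hat\sigma_t)_{t>0}$ on $\hat X$ back through $\Phi$ produces the asserted flow $(\sigma_t)_{t>0}$ on $\tild\Pz_2(Y|X)$, with existence and uniqueness transferred by the isometry.

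For (ii)--(iv) the key structural feature is the canonical involution $\iota:\hat X\to\hat X$ swapping the two sheets, which is an $\hat\mm$-preserving isometry with fixed-point set the seam $X\setminus Y$. The heat semigroup $\hat P_t$ on $\hat X$ therefore commutes with $\iota$ and stabilizes the symmetric and antisymmetric subspaces. Antisymmetric functions vanish on the seam, so $\hat P_t$ restricted to them corresponds to the Dirichlet heat semigroup on $Y$; symmetric functions descend to functions on $X$ and $\hat P_t$ on them corresponds to the heat semigroup on $X$. Dually, the canonical projection $\hat X\to X$ collapsing the two sheets maps $\Phi(\sigma_t)$ to $\overline\sigma_t=\sigma^+_t+\sigma^-_t$, while the antisymmetric projection produces the signed measure $\sigma^0_t=\sigma^+_t-\sigma^-_t$. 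Combined with the identification from Paragraph~2 of $(\sigma_t)$ as the image of the heat flow on $\hat X$, this yields the heat-flow evolution of $\overline\sigma_t$ on $X$ proving (iii), and the Dirichlet-heat-flow evolution of $\sigma^0_t=\mu_t$ on $Y$ proving (ii). Inverting the linear relations $\sigma^\pm_t=\tfrac12(\overline\sigma_t\pm\sigma^0_t)$ gives (iv).

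The main technical hurdle is bridging the two pictures: showing that the Wasserstein $\evik$-flow on $\hat X$ genuinely decomposes, under the $\iota$-symmetry, into the Dirichlet and Neumann heat flows. This requires a precise comparison of Cheeger energies across the gluing -- one needs identities of the form $\hat{\ch}(f)=2\,\ch^{\mathrm{Dir}}_Y(f|_Y)$ for antisymmetric $f$ and $\hat{\ch}(f)=2\,\ch(f|_X)$ for symmetric $f$ -- together with the $\rcd{}{}$-theoretic fact that the Wasserstein gradient flow of $\Ent_{\hat\mm}$ coincides with the $L^2$-gradient flow of $\hat{\ch}$. Once these two flows are identified on $\hat X$, the symmetric/antisymmetric decomposition automatically commutes with the evolution and the characterizations in (ii)--(iv) follow.
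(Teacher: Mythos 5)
Your proposal follows essentially the same route as the paper: lift to the doubled space via the isometry $\Phi$, identify the entropies so that Assumption \ref{ass} becomes $\rcd{K}{\infty}$ for $\hat X$, invoke the Ambrosio--Gigli--Savar\'e theory for part (i), and decompose the heat semigroup on $\hat X$ into its symmetric (Neumann) and antisymmetric (Dirichlet) parts for (ii)--(iv) --- the paper realizes this decomposition by explicitly defining the glued semigroup $P_t^{GL}u=P_t\bar u+P_t^0\kringel u_i$ and identifying it with $\hat P_t$ through the Dirichlet forms (Theorem \ref{thm:E_GL=E_hat}, Lemma \ref{lem:P_t_measure}), which is exactly the ``technical hurdle'' you correctly isolate. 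Only a cosmetic caveat: with the paper's normalization $\hat\mm=\frac12(\mm^++\mm^-)$ the Cheeger-energy identity reads $\hat\ch(u)=\ch(f)$ for symmetric $u$ rather than $2\ch(f)$.
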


 \begin{remark}
  
 a) As in \cite{Savare}*{after Cor. 4.3, Thm. 4.4} (based on \cite{AGS-Bakry-Emery}*{Prop. 3.2, Thm. 3.5}) one can extend the flow to measures without finite second moment.
 
 b) In the situation of Theorem \ref{riem}, the ``heat flow on $X$'' will be the heat flow on $\overline Y\subset M$ with Neumann boundary conditions at $\partial Y$.
 \end{remark}

\begin{proposition}\label{prop:W_contraction} 
The $\evik$-flows $(\sigma_t)_{t>0}$ and $(\tau_t)_{t>0}$ as above are $K$-contractive in all $L^p$-transportation distances:
$$\tild W_p\big( \sigma_t,\tau_t)\le e^{-Kt}\cdot \tild W_p\big( \sigma_0,\tau_0)$$
for all $t>0$ and all $p\in[1,\infty)$.
 \end{proposition}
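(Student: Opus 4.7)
The plan is to lift everything to the doubling (glued) space $\hat X$ via the isometry $\tild\Pz_p(Y|X)\cong \Pz_p(\hat X)$ introduced in Section~\ref{subsec:identification} and then invoke known $L^p$-contraction results for heat flows on $\mathrm{RCD}(K,\infty)$ spaces. Concretely, under this identification, a charged probability $\sigma=(\sigma^+,\sigma^-)$ becomes an ordinary probability $\hat\sigma$ on $\hat X$ obtained by placing $\sigma^+$ on the ``positive copy'' and $\sigma^-$ on the ``negative copy'' of $X$. The key point is that for all $p\in[1,\infty)$ this map is an isometry between $\tild W_p$ and the usual Kantorovich--Wasserstein distance $\hat W_p$ on $\hat X$, because the meta-distance $d^*$ between points in different copies is exactly the intrinsic distance in $\hat X$ going through the gluing locus $X\setminus Y$.

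Next, I would translate Assumption~\ref{ass} and Proposition~\ref{evi}. The $K$-convexity of $\widetilde\Ent_\mm$ on $(\tild\Pz_2(Y|X),\tild W_2)$ is, via the isometry, precisely the $K$-convexity of the Boltzmann entropy on $(\Pz_2(\hat X),\hat W_2)$; together with infinitesimal Hilbertianity of $\hat X$ (inherited, as discussed in the gluing section, from that of $X$), this makes $\hat X$ an $\mathrm{RCD}(K,\infty)$ space. Proposition~\ref{evi}(i) then identifies the $\evik$-flow $(\sigma_t)$ with the unique $\evik$-flow $(\hat\sigma_t)$ of the entropy on $\hat X$, i.e.\ with the heat flow on the doubling.

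Given this reduction, the statement $\tild W_p(\sigma_t,\tau_t)\le e^{-Kt}\tild W_p(\sigma_0,\tau_0)$ becomes the classical $L^p$-Wasserstein contraction
\[ \hat W_p(\hat\sigma_t,\hat\tau_t)\le e^{-Kt}\hat W_p(\hat\sigma_0,\hat\tau_0) \]
for the heat flow on the $\mathrm{RCD}(K,\infty)$ space $\hat X$. For $p=2$ this is an immediate consequence of the $\evik$-property itself. For general $p\in[1,\infty)$ it follows from Kuwada's duality between $L^p$-Wasserstein contraction of the dual semigroup and the Bakry--Emery-type gradient estimate $|\nabla P_t f|^q \le e^{-Kqt}P_t(|\nabla f|^q)$ with $q=p/(p-1)$, which holds on any $\mathrm{RCD}(K,\infty)$ space.

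The main obstacle is the first step, namely verifying that the identification is genuinely isometric for every $p\ge 1$, not just for $p=2$. One must check that any coupling $q\in\Cpl(\overline\sigma,\overline\tau)$ together with its canonical decomposition $q=q^{++}+q^{+-}+q^{-+}+q^{--}$ corresponds to a coupling on $\hat X\times\hat X$ whose cost equals the four-piece expression in~\eqref{eq:Wtilde}, and conversely that an optimal $\hat W_p$-coupling on $\hat X$ always admits such a decomposition (in particular, that one may assume mass travelling from one copy to the other actually passes through the gluing set, which uses the definition of $d^*$). Once this is established, and once the gluing machinery ensures $\hat X\in\mathrm{RCD}(K,\infty)$, the contraction is purely a consequence of standard RCD theory.
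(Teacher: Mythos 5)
Your proposal is correct and follows essentially the same route as the paper: transfer everything to the doubling $\hat X$ via the isometry of Lemma \ref{lem:phipsi}, note that Assumption \ref{ass} together with Lemma \ref{lem:ass=ent_hat_convex} makes $\hat X$ an $\rcd{K}{\infty}$ space, and invoke the known $L^p$-Wasserstein contraction of the heat flow on such spaces. The paper states this in two lines; your additional care about the isometry holding for all $p$ (the content of Lemma \ref{lem:phipsi}) and the Kuwada-duality mechanism behind the $p\neq 2$ case is a faithful expansion of the same argument.
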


\begin{theorem} \label{thm:Wnull_contraction}
For all $\mu_0,\nu_0\in\Pz^{sub}_p(Y)$, all $t>0$ and all $p\in[1,\infty)$
$$ W^0_p\big( \mu_t,\nu_t)\le e^{-Kt}\cdot  W^0_p\big( \mu_0,\nu_0)$$
where  $(\mu_t)_{t>0}$ and $(\nu_t)_{t>0}$ denote the heat flows on $Y$ with Dirichlet boundary conditions starting in $\mu_0$ and $\nu_0$, resp.
\end{theorem}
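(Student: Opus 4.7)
The idea is to lift the problem up to the space of charged probabilities, where there is a genuine $\evik$-gradient-flow structure and a contraction estimate (Proposition \ref{prop:W_contraction}), then push the result back down via the effective-measure projection $\sigma\mapsto \sigma^0$, which by Proposition \ref{evi}(ii) maps the lifted flow precisely onto the Dirichlet heat flow.

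More concretely, we may assume $W^0_p(\mu_0,\nu_0)<\infty$; otherwise the claim is trivial. Fix $\varepsilon>0$ and, using Definition \ref{def:Wnull} together with Remark \ref{remark}(d), choose $\rho,\eta\in\mathcal{P}^{sub}(X)$ with $(\mu_0+2\rho)(X)=1=(\nu_0+2\eta)(X)$ such that
$$\tild W_p\bigl((\mu_0+\rho,\rho),(\nu_0+\eta,\eta)\bigr)\le W^0_p(\mu_0,\nu_0)+\varepsilon.$$
Set $\sigma_0:=(\mu_0+\rho,\rho)$, $\tau_0:=(\nu_0+\eta,\eta)$, and run the $\evik$-gradient flow of $\widetilde\Ent_\mm$ (Proposition \ref{evi}(i)) from these data to obtain curves $(\sigma_t)_{t>0},(\tau_t)_{t>0}$ in $\tild\Pz_2(Y|X)$. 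Proposition \ref{prop:W_contraction} then yields
$$\tild W_p(\sigma_t,\tau_t)\le e^{-Kt}\,\tild W_p(\sigma_0,\tau_0).$$
By Proposition \ref{evi}(ii) the effective flows satisfy $\sigma_t^0=\mu_t$ and $\tau_t^0=\nu_t$, so $(\sigma_t,\tau_t)$ is an admissible pair of lifts in the infimum defining $W^0_p(\mu_t,\nu_t)$. Hence
$$W^0_p(\mu_t,\nu_t)\le \tild W_p(\sigma_t,\tau_t)\le e^{-Kt}\bigl(W^0_p(\mu_0,\nu_0)+\varepsilon\bigr),$$
and sending $\varepsilon\downarrow 0$ proves the theorem.

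The main technical point is ensuring the lifts $\sigma_0,\tau_0$ actually lie in $\tild\Pz_2(Y|X)$, as required to invoke the $\evik$-flow. For $p\ge 2$ this is automatic, since the total measure $\overline\sigma_0$ is a probability measure and finite $p$-th moment implies finite second moment by Jensen's inequality. For $p\in[1,2)$ one should first approximate $\mu_0,\nu_0$ by subprobabilities of bounded support (for which near-optimal lifts can be chosen with finite second moment), apply the contraction in that regime, and then pass to the limit, using stability of the Dirichlet heat flow under $W^0_p$-approximation (derivable from the isometric identification of $\tild\Pz_p(Y|X)$ with $\mathcal{P}_p(\hat X)$ of Section \ref{subsec:identification}, together with Proposition \ref{cor:W_sharp_vague_conv}). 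Alternatively, the extension of the $\evik$-flow to measures without finite second moment mentioned in the remark following Proposition \ref{evi} lets the one-step argument above go through directly for all $p\in[1,\infty)$.
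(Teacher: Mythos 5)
Your argument is exactly the paper's proof: pick $\varepsilon$-optimal lifts $\sigma_0,\tau_0$ of $\mu_0,\nu_0$, apply the $\tild W_p$-contraction of Proposition \ref{prop:W_contraction} to the $\evik$-flows, and use that the effective flows are the Dirichlet heat flows together with the definition of $W^0_p$ as an infimum. Your extra paragraph on ensuring the lifts have finite second moment addresses a point the paper passes over silently (resolved, as you note, by the remark on extending the flow beyond finite second moments), so the proposal is correct and essentially identical in approach.
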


\begin{proof}
Given $\mu_0,\nu_0\in\Pz^{sub}_p(Y)$ and $\eps>0$,  we may choose $\sigma_0,\tau_0\in \tilde\Pz_p(Y|X)$ with 
$\mu_0=\sigma^+_0-\sigma^-_0$ and  $\nu_0=\tau^+_0-\tau^-_0$ such that
$$\tild W_p\big( \sigma_0,\tau_0)\le W^0_p\big( \mu_0,\nu_0)+\eps.$$
Thus, by the very definition of  $ W^0_p$ and by the previous proposition,
\begin{eqnarray*}
 W^0_p\big( \mu_t,\nu_t) \le 
\tild W_p\big( \sigma_t,\tau_t)
\le  e^{-Kt}\cdot \tild W_p\big( \sigma_0,\tau_0)=e^{-Kt}\cdot \Big( W^0_p\big( \mu_0,\nu_0)+\eps\Big).
\end{eqnarray*}
Since $\eps>0$ was arbitrary, this proves the claim.
\end{proof}

\begin{corollary} \label{cor:Wsharp_contraction}
Let $\mu_0,\nu_0\in\Pz^{sub}_p(Y)$, and $(\mu_t)_{t>0}$ and $(\nu_t)_{t>0}$ denote the heat flows on $Y$ with Dirichlet boundary conditions starting in $\mu_0$ and $\nu_0$, resp. Then for all $t>0$ and all $p\in[1,\infty)$ we have both
 \[ W^\flat_p\big( \mu_t,\nu_t)\leq e^{-Kt}\cdot  W^\flat_p\big( \mu_0,\nu_0), \]
and
  \[ W^\sharp_p\big( \mu_t,\nu_t)\le e^{-Kt}\cdot  W^\sharp_p\big( \mu_0,\nu_0). \]
  In particular,
 $W'_1\big( \mu_t,\nu_t)\le e^{-Kt}\cdot  W'_1\big( \mu_0,\nu_0)$.
\end{corollary}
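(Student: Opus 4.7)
The plan is to bootstrap up from the $W_p^0$-contraction already established in Theorem \ref{thm:Wnull_contraction}, first to $W_p^\flat$ via its chain definition, then to $W_p^\sharp$ via its length definition, and finally to $W_1'$ via the identification in Theorem \ref{thm:W_sharp-W_prime}(i). Throughout, $S_t$ will denote the Dirichlet heat semigroup on $Y$, which by Proposition \ref{evi}(ii) is well-defined as a map $\Pz_2^{sub}(Y)\to\Pz_2^{sub}(Y)$ (and, as a preliminary check, I would verify via the $\tilde W_p$-contraction of Proposition \ref{prop:W_contraction} that $S_t$ maps $\Pz_p^{sub}(Y)$ into itself, by picking a charged lift of any reference $\delta_y$ and comparing).

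For the $W_p^\flat$-estimate, I would fix $\mu_0,\nu_0\in\Pz_p^{sub}(Y)$ and any chain $\mu_0=\eta_0,\eta_1,\dots,\eta_n=\nu_0$ in $\Pz_p^{sub}(Y)$. Applying $S_t$ componentwise produces a chain $\mu_t=S_t\eta_0,\dots,S_t\eta_n=\nu_t$, and Theorem \ref{thm:Wnull_contraction} yields
\[
\sum_{i=1}^n W_p^0(S_t\eta_{i-1},S_t\eta_i)\,\le\, e^{-Kt}\sum_{i=1}^n W_p^0(\eta_{i-1},\eta_i).
\]
Taking the infimum over chains on the right gives $W_p^\flat(\mu_t,\nu_t)\le e^{-Kt}\,W_p^\flat(\mu_0,\nu_0)$.

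For the $W_p^\sharp$-estimate, I would use the contraction just proved to observe that $S_t$ is (globally) $e^{-Kt}$-Lipschitz for $W_p^\flat$; in particular, if $(\eta_s)_{s\in[0,1]}$ is any $W_p^\flat$-continuous curve from $\mu_0$ to $\nu_0$, then $s\mapsto S_t\eta_s$ is a $W_p^\flat$-continuous curve from $\mu_t$ to $\nu_t$. For any partition $0=s_0<\dots<s_n=1$,
\[
\sum_{i=1}^n W_p^\flat(S_t\eta_{s_{i-1}},S_t\eta_{s_i})\,\le\, e^{-Kt}\sum_{i=1}^n W_p^\flat(\eta_{s_{i-1}},\eta_{s_i})\,\le\, e^{-Kt}\,L_p^\flat(\eta),
\]
so passing to the supremum yields $L_p^\flat(S_t\circ\eta)\le e^{-Kt}L_p^\flat(\eta)$. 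Infimizing over admissible curves $\eta$ then gives $W_p^\sharp(\mu_t,\nu_t)\le e^{-Kt}W_p^\sharp(\mu_0,\nu_0)$.

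The final assertion about $W_1'$ is immediate: Theorem \ref{thm:W_sharp-W_prime}(i) identifies $W_1^\sharp=W_1'$, so the $p=1$ case of the previous inequality is precisely the claim. The main technical point in the whole argument is the preliminary verification that $S_t$ genuinely preserves $\Pz_p^{sub}(Y)$ (so that the chains and curves actually lie in the domain of definition of $W_p^\flat$ and $W_p^\sharp$); once this is in hand, the two contraction inequalities are formal consequences of the definitions combined with the already-established $W_p^0$-estimate, and no further hard work is needed.
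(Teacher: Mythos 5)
Your proposal is correct and follows essentially the same route as the paper: apply the Dirichlet semigroup to chains and use the $W_p^0$-contraction of Theorem \ref{thm:Wnull_contraction} to get the $W_p^\flat$-estimate, then pass to lengths of curves for $W_p^\sharp$, with the $W_1'$ statement following from Theorem \ref{thm:W_sharp-W_prime}(i). Your preliminary remark that the semigroup preserves $\Pz_p^{sub}(Y)$ is a sensible extra check that the paper leaves implicit.
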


\begin{proof}
Observe that
 \begin{align*}
  W_p^\flat(\mu_t,\nu_t) =& \inf \left\{ \sum_{i=1}^n W_p^0(\eta_{i-1}, \eta_i) \,\big|\, n\in\mn, \eta_i\in\mathcal P^{sub}_p(Y), \eta_0=\mu_t, \eta_n=\nu_t \right\} \\
    \leq& \inf \left\{ \sum_{i=1}^n W_p^0(\mathscr P_t^0\xi_{i-1}, \mathscr P_t^0\xi_i) \,\big|\, n\in\mn, \xi_i\in\mathcal P^{sub}_p(Y), \xi_0=\mu_0, \xi_n=\nu_0 \right\} \\
    \leq& e^{-Kt} \inf \left\{ \sum_{i=1}^n W_p^0(\xi_{i-1}, \xi_i) \,\big|\, n\in\mn, \xi_i\in\mathcal P^{sub}_p(Y), \xi_0=\mu_0, \xi_n=\nu_0 \right\} \\
    =& e^{-Kt} W_p^\flat(\mu_0,\nu_0).
 \end{align*}
 Here, $\mathscr P_t^0$ is the heat semigroup with Dirichlet boundary conditions on measures, see Subsection \ref{subsec:heat_flows}. 
 This also implies that for a curve $(\eta_s)_{s\in[0,1]}\subset \mathcal P^{sub}_p(Y)$ its length satisfies $L_p^\flat(\mathscr P_t\eta) \leq e^{-Kt} L_p^\flat(\eta)$, so that eventually
 \begin{align*}
  W_p^\sharp(\mu_t,\nu_t) =\inf_{\eta:\mu_t\leadsto \nu_t} L_p^\flat(\eta) \leq \inf_{\xi:\mu_0 \leadsto \nu_0} L_p^\flat(\mathscr P_t \xi) \leq e^{-Kt} \inf_{\xi:\mu_0 \leadsto \nu_0} L_p^\flat(\xi) = e^{-Kt} W_p^\sharp(\mu_0,\nu_0).
 \end{align*}
\end{proof}

\subsection{Gradient estimates and Bochner's inequality} \label{subsec:gradient_estimate_bochner}
 Let us continue to assume that $(X,d,\mm)$ is a metric measure space which satisfies an RCD$(K,\infty)$-condition  and that $Y\subset X$ is a dense open subset with $\mm(\partial Y)=0$. Assumption \ref{ass} yields a gradient estimate which involves both semigroups, $P_t$ (with Neumann boundary condition) and $P_t^0$ (with Dirichlet boundary condition). Before proving this estimate, we will see that it is equivalent to a Bochner inequality which involves the corresponding Laplace operators. To state directly the $p$-versions, let us introduce the appropriate function spaces. For $p\in [1,\infty)$ we set
  \begin{align}
   D_p(\mathcal E) :=& \{ f\in D(\mathcal E)\cap L^p(X,\mm) \,\big|\, |\nabla f| \in L^p(X,\mm) \}, \label{eq:def_D_p} \\
   D_p(\Delta) :=& \{ f\in D(\Delta) \cap L^p(X,\mm) \,\big|\, \Delta f\in L^p(X,\mm) \}, \label{eq:def_D_p_Laplace}
  \end{align}
 and similarly for $\mathcal E^0$ and $\Delta^0$, which are the Dirichlet form and generator associated to the heat flow $P_t^0$.

\begin{proposition}\label{prop:p_grad=p_bochner} 
Assume that $\mm(X)<\infty$. For each $p\in [1,2]$, the following 
properties are equivalent to each other:
\begin{itemize}
\item[(i)] 
For all $t>0$, and all $f\in D_p(\mathcal E^0)$
\begin{equation} \big|\nabla P^0_tf\big|^p\le e^{-Kpt}\cdot P_t\big(|\nabla f|^p\big) \;\;\;\;\; \text{$\mm$-a.e.\! in $X$} \;\;\;\text{ (``$p$-gradient estimate'')}. \label{eq:p_grad_est}\end{equation}
Note that  different semigroups appear on the left and right hand side.
\item[(ii)] For all $f\in D_p(\Delta^0)$ with $\Delta^0f\in D_p(\mathcal E^0)$ and every $\varphi\in D_\infty(\Delta)$ with $\varphi\geq 0$ 
  \begin{equation} \frac{1}{p} \int_X \Delta\varphi |\nabla f|^p \dm - \int_{\{|\nabla f| \neq 0 \}} \varphi |\nabla f|^{p-2} \nabla f \cdot \nabla \Delta^0 f\dm \geq K \int_X \varphi |\nabla f|^p \dm \;\;\; \text{ (``$p$-Bochner inequality'')}. \label{eq:p_bochner} \end{equation}
\end{itemize}
\end{proposition}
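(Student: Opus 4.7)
The plan is to prove both implications via the semigroup-interpolation technique of Bakry--Émery, paying careful attention to the fact that the left-hand side of \eqref{eq:p_grad_est} involves the Dirichlet object $P_t^0$ while the right-hand side involves the Neumann-type object $P_t$. The key enabling features are the self-adjointness of $P_t$ on $L^2(X,\mm)$ (which lets us move $P_t$ from one factor to the other, crucial since it acts on $|\nabla f|^p$ on the right) and the regularizing effect of both semigroups.

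For the implication (i) $\Rightarrow$ (ii), I would fix $f\in D_p(\Delta^0)$ with $\Delta^0 f\in D_p(\mathcal E^0)$ and a nonnegative $\varphi\in D_\infty(\Delta)$, multiply \eqref{eq:p_grad_est} by $\varphi$, and integrate against $\mm$, obtaining
\[ \int_X \varphi\,|\nabla P_t^0 f|^p \dm \le e^{-Kpt}\int_X P_t\varphi \cdot |\nabla f|^p \dm, \]
which is an equality at $t=0$. Differentiating at $t=0^+$, using the chain rule $\partial_t |\nabla P_t^0 f|^p = p|\nabla P_t^0 f|^{p-2}\nabla P_t^0 f\cdot \nabla \Delta^0 P_t^0 f$ on the left and $\partial_t P_t\varphi = \Delta P_t\varphi$ on the right, then dividing by $p$ and rearranging yields precisely \eqref{eq:p_bochner}.

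For the converse (ii) $\Rightarrow$ (i), I would fix $t>0$, a nonnegative $\varphi\in D_\infty(\Delta)$, and consider the interpolation
\[ F(s) := e^{-Kps}\int_X P_s\varphi \cdot |\nabla g_s|^p \dm, \qquad g_s := P_{t-s}^0 f, \qquad s\in[0,t]. \]
Using $\partial_s g_s = -\Delta^0 g_s$ and self-adjointness of $P_s$, a direct computation gives
\[ F'(s) = -KpF(s) + p\,e^{-Kps}\left[\frac{1}{p}\int_X \Delta(P_s\varphi)\,|\nabla g_s|^p \dm - \int_{\{|\nabla g_s|\neq 0\}} P_s\varphi \cdot |\nabla g_s|^{p-2}\nabla g_s \cdot \nabla\Delta^0 g_s \dm\right]. \]
Since $P_s$ preserves positivity and maps $D_\infty(\Delta)$ to itself (via $\Delta P_s = P_s\Delta$), and since $P_t^0$ regularizes $f$ into the hypothesis class of \eqref{eq:p_bochner} for every $s\in[0,t)$, the bracket is bounded below by $K\int P_s\varphi\cdot|\nabla g_s|^p\dm = Ke^{Kps}F(s)$. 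Hence $F'(s)\ge 0$, so $F(0)\le F(t)$, i.e.
\[ \int_X \varphi\,|\nabla P_t^0 f|^p \dm \le e^{-Kpt}\int_X \varphi\,P_t(|\nabla f|^p)\dm, \]
and letting $\varphi$ range over a sufficiently rich class of nonnegative test functions yields the pointwise estimate \eqref{eq:p_grad_est}.

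The main obstacle is justifying these manipulations rigorously. For $p\in[1,2)$ the weight $|\nabla g_s|^{p-2}$ is singular on $\{|\nabla g_s|=0\}$, so one first works with the regularized density $(|\nabla g_s|^2+\varepsilon)^{p/2}$, collects $\varepsilon$-independent bounds in terms of $\|\nabla g_s\|_p$ and $\|\nabla\Delta^0 g_s\|_p$, and passes to the limit $\varepsilon\downarrow 0$ by dominated convergence. One also needs the semigroup regularity $s\mapsto P_{t-s}^0 f\in D_p(\Delta^0)$ with $\Delta^0 g_s\in D_p(\mathcal E^0)$ for $s\in[0,t)$ together with $L^p$-continuity up to $s=t$; these follow from the spectral theorem in $L^2$ combined with the inclusions $L^2\hookrightarrow L^p$ valid under the standing hypothesis $\mm(X)<\infty$.
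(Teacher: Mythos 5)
Your proposal is correct and follows essentially the same route as the paper, which gives no detailed argument but states that the proof is an adaptation of Han's Theorem 3.6 — precisely the Bakry--\'Emery interpolation $s\mapsto e^{-Kps}\int P_s\varphi\,|\nabla P_{t-s}^0f|^p\dm$ for (ii)$\Rightarrow$(i) and differentiation of the integrated gradient estimate at $t=0^+$ for (i)$\Rightarrow$(ii), with the only novelty being the careful bookkeeping of which semigroup ($P_t$ vs.\ $P_t^0$) acts on which factor. Your treatment of the singular set $\{|\nabla f|=0\}$ for $p<2$ via the regularization $(|\nabla g_s|^2+\varepsilon)^{p/2}$ and your use of $\mm(X)<\infty$ for the $L^2\hookrightarrow L^p$ embeddings are exactly the adaptations required.
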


The proof is an adaption of the one of \cite{Han}*{Thm. 3.6}.

\begin{theorem} \label{thm:ass_implies_bochner}
\begin{itemize}
\item[i)] Assumption \ref{ass} implies that both properties (i) and (ii) of Proposition \ref{prop:p_grad=p_bochner} are satisfied, even for all  $p\in [1,\infty)$ and without the assumption that $\mm(X)<\infty$.
\item[ii)] Moreover, it implies that the flows from Proposition \ref{evi} and the heat semigroups are related to each other by
$$\nu_t=(P_tv)\mm, \qquad \mu_t=(P_t^0w)\mm$$
for $\nu_0=v\mm \in \mathcal P_2(X)$ and $\mu_0=w\mm \in \mathcal P^{sub}_2(X)$.
\end{itemize}
\end{theorem}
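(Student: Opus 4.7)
The plan is to lift everything to the doubled space $\hat X$ and then apply the standard $\text{RCD}(K,\infty)$ toolbox there. The isometry $\tilde{\mathcal P}_2(Y|X)\cong \mathcal P_2(\hat X)$ mentioned in Section \ref{subsec:identification} turns Assumption \ref{ass} into $K$-convexity of the ordinary Boltzmann entropy on $\mathcal P_2(\hat X)$, which together with the infinitesimal Hilbertianity of $X$ (inherited by $\hat X$ through the gluing construction of Section \ref{sec:gluing}) gives the $\text{RCD}(K,\infty)$ condition on $(\hat X,\hat d,\hat\mm)$. On $\hat X$, one then has at disposal the complete Bakry--Émery package: existence and uniqueness of an $\evik$-flow, identification with the linear semigroup $\hat P_t$ generated by the Dirichlet form $\hat{\mathcal E}$, the $p$-gradient estimate $|\nabla \hat P_t f|^p\le e^{-Kpt}\hat P_t(|\nabla f|^p)$ for all $p\in[1,\infty)$, and the equivalent $p$-Bochner inequality. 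This route is insensitive to the total mass of $X$, which is how one gets rid of the assumption $\mm(X)<\infty$.

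For part (ii), the key observation is that the reflection isometry $\iota:\hat X\to\hat X$ interchanging the two copies of $X$ commutes with $\hat P_t$ (because $\hat{\mathcal E}$ is $\iota$-invariant). Thus the $\hat P_t$-semigroup preserves the splitting $L^2(\hat X,\hat\mm)=L^2_{\mathrm{sym}}\oplus L^2_{\mathrm{anti}}$ into symmetric and antisymmetric functions under $\iota$. Under the identification of $L^2_{\mathrm{sym}}$ with $L^2(X,\mm)$ and of $L^2_{\mathrm{anti}}$ with functions on $Y$ that vanish on $X\setminus Y$, the restricted semigroups coincide with the Neumann semigroup $P_t$ on $X$ and the Dirichlet semigroup $P_t^0$ on $Y$, respectively (this is the gluing characterization from Section \ref{sec:gluing}). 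Using the bijection $\sigma=(\sigma^+,\sigma^-)\leftrightarrow \hat\sigma\in\mathcal P_2(\hat X)$ with $\sigma^+=\hat\sigma|_{X_1}$, $\sigma^-=\iota_\#\hat\sigma|_{X_1}$, one reads off $\nu_t=\sigma^+_t+\sigma^-_t$ as the pushforward of the symmetric part and $\mu_t=\sigma^+_t-\sigma^-_t$ as the pushforward of the antisymmetric part, both evolved under $\hat P_t$; combined with the standard fact (in $\text{RCD}(K,\infty)$) that the $\evik$-flow for the entropy on $\hat X$ is realized by the dual semigroup, this yields $\nu_t=(P_tv)\mm$ and $\mu_t=(P_t^0 w)\mm$.

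Given (ii), part (i) follows by specialization: take $f\in D_p(\mathcal E^0)$ and regard it as the antisymmetric lift $\hat f\in D_p(\hat{\mathcal E})$. Then $\hat P_t\hat f$ is antisymmetric and equals the antisymmetric lift of $P_t^0 f$, so $|\nabla \hat P_t\hat f|=|\nabla P_t^0 f|$ pointwise on $X_1\cong X$. On the other hand, $|\nabla \hat f|$ is $\iota$-invariant and equals $|\nabla f|$ on each sheet, hence $\hat P_t(|\nabla\hat f|^p)$ restricted to $X_1$ equals $P_t(|\nabla f|^p)$ (Neumann semigroup on $X$). Applying the $\hat X$-gradient estimate gives precisely (\ref{eq:p_grad_est}), and Proposition \ref{prop:p_grad=p_bochner} then yields the Bochner inequality (\ref{eq:p_bochner}) whenever $\mm(X)<\infty$; the general case is obtained by a standard truncation/localization argument since the pointwise inequality (\ref{eq:p_grad_est}) already holds without finiteness of $\mm$.

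The main technical obstacle is the careful bookkeeping in this sym/anti decomposition: one must verify that the Cheeger energy of the glued space really splits as claimed, that the antisymmetric subspace of $D_p(\hat{\mathcal E})$ is precisely the image of $D_p(\mathcal E^0)$ (this uses $\mm(\partial Y)=0$ and the density statements of Section \ref{sec:gluing}), and that $|\nabla\hat f|$ coincides with $|\nabla f|$ on each copy, so that the symmetric-side semigroup acting on $|\nabla f|^p$ is genuinely the Neumann one. Once this structural dictionary is established, both (i) and (ii) fall out of the RCD theory on $\hat X$ applied in a single stroke.
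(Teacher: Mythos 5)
Your proposal is correct and follows essentially the same route as the paper: lift Assumption \ref{ass} to the $\rcd{K}{\infty}$ property of $\hat X$ via Lemma \ref{lem:ass=ent_hat_convex}, apply the Bakry--\'Emery $p$-gradient estimate there, and transfer it back through the antisymmetric lift $u=f$ on $X^+$, $u=-f$ on $X^-$, using the identification $\hat P_t u = P_t\bar u+P^0_t\kringel u_i$ from Theorem \ref{thm:E_GL=E_hat} (your sym/anti splitting is exactly the paper's $\bar u$ / $\kringel u_i$ decomposition for $k=2$), with part (ii) following from semigroup duality. The only cosmetic difference is that the paper explicitly cites Savar\'e's self-improvement for $p\in[1,2]$ and Jensen's inequality for $p>2$, where you invoke the general RCD package.
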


\begin{corollary} Assume \ref{ass}. Then for all $u:X\to\R$ and all $t>0$
$$\Lip_d(P_t^0u)\le e^{-Kt}\, \Lip_d(u)$$
as well as
$$\Lip_{d'}(P_t^0u)\le e^{-Kt}\, \Lip_{d'}(u).$$
Here $\Lip_{d}(.)$ denotes the Lipschitz constant w.r.t.\ the original metric $d$ on $X=\overline Y$ whereas $\Lip_{d'}(.)$ denotes the Lipschitz constant w.r.t.\ the shortcut metric $d'$ on $Y'=Y\cup\{\partial\}$.
\end{corollary}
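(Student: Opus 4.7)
The plan is to dualize the analytic contractions already established in the preceding subsections. The first inequality I would extract from the $p$-gradient estimate of Theorem \ref{thm:ass_implies_bochner} together with the Sobolev-to-Lipschitz property of $\mathrm{RCD}$-spaces; the second from the $W_1'$-contraction of Corollary \ref{cor:Wsharp_contraction} together with Kantorovich--Rubinstein duality on the complete length space $(Y',d')$.

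For the $d$-bound, assume $L:=\Lip_d(u)<\infty$, so that $|\nabla u|\le L$ holds $\mm$-a.e. Since the Neumann heat semigroup $P_t$ is Markovian with $P_t 1 = 1$, the $p$-gradient estimate \eqref{eq:p_grad_est} yields, for every $p\in[1,\infty)$,
\[
|\nabla P_t^0 u|^p\le e^{-Kpt}\,P_t(|\nabla u|^p)\le e^{-Kpt}L^p\quad \mm\text{-a.e.}
\]
Taking $p$-th roots gives an $\mm$-a.e.\ bound on the minimal weak upper gradient of $P_t^0 u$, and the Sobolev-to-Lipschitz property available in any $\mathrm{RCD}(K,\infty)$-space then upgrades this to the pointwise bound $\Lip_d(P_t^0 u)\le e^{-Kt}L$.

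For the $d'$-bound, the embedding $\mu\mapsto\mu'=\mu+(1-\mu(Y))\delta_\partial$ identifies $W_1'$ on $\Pz^{sub}(Y)$ with the standard Kantorovich--Wasserstein distance on $(\Pz(Y'),d')$, giving access to the dual formula
\[
W_1'(\mu',\nu')=\sup\Bigl\{\int_{Y'} f\,d(\mu'-\nu')\,:\,\Lip_{d'}(f)\le 1\Bigr\}.
\]
For a $d'$-Lipschitz $u$ on $Y'$ the normalization $u(\partial)=0$ may be assumed (subtracting the constant $u(\partial)$ does not change $\Lip_{d'}(u)$), whence $\int u\,d(\mathscr P_t^0\delta_x)'=P_t^0 u(x)$ for every $x\in Y$. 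Pairing the duality with the $W_1'$-contraction $W_1'(\mathscr P_t^0\delta_x,\mathscr P_t^0\delta_y)\le e^{-Kt}d'(x,y)$ of Corollary \ref{cor:Wsharp_contraction} then yields
\[
|P_t^0 u(x)-P_t^0 u(y)|\le e^{-Kt}\Lip_{d'}(u)\,d'(x,y).
\]

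The main technical concern is the transition from $\mm$-a.e.\ bounds to genuine pointwise Lipschitz statements. In the $d$-case this is precisely the content of the Sobolev-to-Lipschitz property and accounts for the only non-cosmetic step. In the $d'$-case one must additionally verify that $P_t^0 u$ admits a continuous extension to $Y'$ taking the value $0$ at $\partial$; this follows from the same contraction, since $|P_t^0 u(x)|\le\Lip_{d'}(u)\,W_1'(\mathscr P_t^0\delta_x,\delta_\partial)\to 0$ as $d'(x,\partial)\to 0$.
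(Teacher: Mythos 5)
Your proposal is correct and follows essentially the same route as the paper: the $\Lip_d$-bound by taking supremum norms in the gradient estimate \eqref{eq:p_grad_est} (your invocation of the Sobolev-to-Lipschitz property to pass from the $\mm$-a.e.\ bound on the weak gradient to a genuine Lipschitz bound is a welcome explicit step the paper leaves implicit), and the $\Lip_{d'}$-bound by Kuwada/Kantorovich--Rubinstein duality applied to the $W_1'$-contraction of Corollary \ref{cor:Wsharp_contraction}, exactly as the paper indicates.
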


\begin{proof} The $\Lip_{d}$-estimate follows from the previous gradient estimates \eqref{eq:p_grad_est} by taking supremum norm.
The $\Lip_{d'}$-estimate, on the other hand, follows via Kuwada duality from the transport estimate in Corollary \ref{cor:Wsharp_contraction} with $p=1$.
\end{proof}
\medskip

Let us finally give a geometric  characterization of Assumption \ref{ass}. Given a metric measure space $(V,d_V,\mm_V)$ we say that an open subset $U\subset V$ is a {\em halfspace} if there exists a measure-preserving isometry $\psi: V\to V$ with invariant set $\partial U=\{x\in V: \psi(x)=x\}$ such that
$\psi(U)=V\setminus \overline U$. We call two metric measure spaces $(V,d_V,\mm_V)$ and $(W,d_W,\mm_W)$ \emph{mms-isomorphic} if there exists a measure-preserving isometry $\xi\colon (V,d_V,\mm_V) \to (W,d_W,\mm_W)$.

\begin{theorem} \label{thm:halfspace}
Let $(X,d,\mm)$ be a metric measure space, and $Y\subset X$ an open local $\rcd{K}{\infty}$ space. 
The following properties are equivalent
\begin{itemize}
\item[(i)] Assumption $\ref{ass}$.
\item[(ii)] $Y$ is a halfspace  in some $\rcd{K}{\infty}$-space $(V,d_V,\mm_V)$ in the sense that there is a halfspace $\tilde Y\subset V$ and a measure-preserving isometry $\xi\colon (Y,d,\mm|_Y)\to (\tilde Y,d_V,\mm_V|_{\tilde Y})$.
\item[(iii)] $\partial Y$ is covered by open sets $X_i$ such that $Y\cap X_i$ for each $i$ is mms-isomorphic to a halfspace $W_i$ in some $\rcd{K}{\infty}$-space $(V_i,d_i,\mm_i)$.
\end{itemize}
\end{theorem}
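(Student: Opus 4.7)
The strategy is to route everything through the \emph{doubling} $\hat X$ of $X$ along $X\setminus Y$ constructed in Section \ref{sec:gluing}. The central intermediate statement, call it $(\star)$, is:
\[ \hat X \text{ (equipped with its doubled metric and measure) is an } \rcd{K}{\infty}\text{-space}. \]
I would first prove $(\mathrm{i}) \Leftrightarrow (\star)$, then $(\star) \Leftrightarrow (\mathrm{ii})$, and finally $(\mathrm{ii}) \Leftrightarrow (\mathrm{iii})$.

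For $(\mathrm{i}) \Leftrightarrow (\star)$, I would use the canonical isometry (from Subsection \ref{subsec:identification}) between $(\tild\Pz_2(Y|X),\tild W_2)$ and the subspace of $\Pz_2(\hat X)$ consisting of measures invariant under the doubling involution $\hat\psi$, under which $\tild\Ent_\mm$ corresponds to $\Ent_{\hat\mm}$. Thus Assumption \ref{ass} is exactly the $K$-convexity of $\Ent_{\hat\mm}$ along $W_2$-geodesics in the invariant subspace. To promote this to $K$-convexity on all of $\Pz_2(\hat X)$, use the involution-averaging trick: for arbitrary $\mu_0,\mu_1\in\Pz_2(\hat X)$, average a $W_2$-geodesic with its $\hat\psi$-pushforward to obtain an invariant geodesic between $\hat\psi$-invariantized endpoints, combined with joint convexity of the entropy and the easy bounds relating the entropy of a measure and its symmetrization. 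This yields $\CD(K,\infty)$ for $\hat X$; infinitesimal Hilbertianity passes through the gluing since the gluing locus has zero $\hat\mm$-measure (because $\mm(\partial Y)=0$ and $Y$ is dense in $X$), and the local Dirichlet form on $\hat X$ is the symmetric extension of the form on the copies of $X$, which is quadratic. Conversely, $(\star)$ immediately implies Assumption \ref{ass} by restriction of $K$-convexity to the invariant subspace.

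For $(\star) \Leftrightarrow (\mathrm{ii})$: the forward direction takes $V:=\hat X$ together with the doubling involution $\hat\psi$; by construction $Y$ (identified with one of the two copies) is a halfspace in $\hat X$ with invariant set naturally identified with $\partial Y$. Conversely, given $(\mathrm{ii})$ with isometry $\xi\colon(Y,d,\mm|_Y)\to(\tild Y,d_V,\mm_V|_{\tild Y})$, the ambient $(V,d_V,\mm_V)$ \emph{is} the doubling of $\tild Y$ along $\partial\tild Y$ (that is the content of being a halfspace); composing $\xi$ with the identification of this doubling with $\hat X$ gives a measure-preserving isometry $\hat\xi\colon\hat X\to V$, so $\hat X$ inherits $\rcd{K}{\infty}$ from $V$.

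For $(\mathrm{ii}) \Leftrightarrow (\mathrm{iii})$: $(\mathrm{ii}) \Rightarrow (\mathrm{iii})$ is immediate by taking a one-element cover of $\partial Y$. For the converse $(\mathrm{iii}) \Rightarrow (\star)$, observe that each $W_i$ inside $V_i$ is, by the halfspace identification, mms-isomorphic to a neighborhood in the local doubling of $Y\cap X_i$, and hence to an open set in $\hat X$ containing the ``boundary stratum'' above $X_i\cap\partial Y$. Covering $\partial Y$ by the $X_i$ thus covers the gluing locus in $\hat X$ by open sets each isometric to an open subset of an $\rcd{K}{\infty}$-space; away from this locus $\hat X$ is locally isometric to $X$, which is $\rcd{K}{\infty}$ by hypothesis (since $Y$ is a local $\rcd{K}{\infty}$ space and $\mm(\partial Y)=0$). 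Hence $\hat X$ is a local $\rcd{K}{\infty}$-space, and the local-to-global theorem for $\rcd{K}{\infty}$ (which is available for length spaces, invoked here from the gluing section) yields $(\star)$. The main obstacle is this last step: local-to-global for $\rcd{K}{\infty}$ requires that $\hat X$ be genuinely a length space and that the local pieces be compatible, so the technical care sits in verifying that the mms-isomorphisms in $(\mathrm{iii})$ are compatible on overlaps to produce a bona fide local $\rcd$ atlas on $\hat X$ to which the local-to-global statement applies.
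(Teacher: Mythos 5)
Your overall architecture -- routing everything through the statement $(\star)$ that the doubling $\hat X$ is an $\rcd{K}{\infty}$ space, with $(\star)\Leftrightarrow(\mathrm{ii})$ via the mirror map, $(\mathrm{ii})\Rightarrow(\mathrm{iii})$ trivially, and $(\mathrm{iii})\Rightarrow(\star)$ via a local-to-global argument near the gluing locus -- is exactly the paper's strategy. However, your treatment of $(\mathrm{i})\Leftrightarrow(\star)$ contains a genuine error. The isometry $\Phi$ of Lemma \ref{lem:phipsi} identifies $(\tild\Pz_2(Y|X),\tild W_2)$ with \emph{all} of $(\Pz_2(\hat X),\hat W_2)$, not with the subspace of measures invariant under the doubling involution: a charged measure $(\sigma^+,\sigma^-)$ places $\sigma^+$ on the upper sheet and $\sigma^-$ on the lower sheet, and these are allowed to differ arbitrarily on $Y$ (only their restrictions to $X\setminus Y$ must agree, which is exactly what well-definedness on the gluing locus requires). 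The involution-invariant measures correspond only to the charged measures with $\sigma^+=\sigma^-$. Since $\widetilde\Ent_\mm\circ\Psi=\widehat\Ent_{\hat\mm}+\log\frac12$ (Lemma \ref{lem:ass=ent_hat_convex}), Assumption \ref{ass} is \emph{verbatim} the $K$-convexity of the entropy on the whole of $\Pz_2(\hat X)$, and $(\mathrm{i})\Leftrightarrow(\star)$ needs no further argument.

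The detour you introduce to repair the (nonexistent) gap -- averaging a $W_2$-geodesic with its $\hat\psi$-pushforward -- would not work even if it were needed. The superposition $\nu_t=\frac12(\mu_t+\hat\psi_\#\mu_t)$ is in general not a $W_2$-geodesic (the union of the two optimal plans need not be cyclically monotone, and $W_2(\nu_0,\nu_1)$ can be strictly smaller than $W_2(\mu_0,\mu_1)$), and even granting $K$-convexity along it, the joint convexity of the entropy gives $\Ent(\nu_t)\le\Ent(\mu_t)$, which is the wrong direction to conclude anything about $\Ent$ along the original geodesic between non-invariant endpoints. So that step should simply be deleted and replaced by the direct appeal to Lemmas \ref{lem:phipsi} and \ref{lem:ass=ent_hat_convex}. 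With that correction, the remaining implications are sound and coincide with the paper's proof, including your (correctly flagged) caveat that the local-to-global step in $(\mathrm{iii})\Rightarrow(\star)$ requires combining the convexity near the gluing locus, supplied by the halfspaces $W_i\subset V_i$, with the local $\rcd{K}{\infty}$ property of the two sheets away from it.
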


\begin{remark} 
The heat flow with Dirichlet boundary values from an optimal transport perspective, to  our knowledge has so far only been investigated in \cite{Figalli_Gigli}, where the authors define a  transportation distance between measures allowing to create or destroy mass at the boundary. This metric is a modification of our transportation metric $W_2'$ based on the shortcut metric $d'$, see Remark \ref{rem:W'_FigalliGigli}. This leads to a gradient flow description of the heat equation with \emph{strictly positive, constant} Dirichlet boundary conditions. However, it does not apply to the study of the heat flow with vanishing Dirichlet boundary conditions. Further approaches to metrics on the space of finite Radon measures are given in \cites{LieroMielkeSavare, PiccoliRossi, KondratyevMV}.
\end{remark}
\medskip

\textbf{Structure of the paper:} In Section \ref{sec:introduction} we introduced the setting of particles and antiparticles, giving definitions, stating the main results and giving proofs of those results which do not need the doubling. 
Section \ref{sec:heat_flows_X} deals with the heat flow on metric measure spaces. In particular, the heat flow with Dirichlet boundary values is discussed.
In Section \ref{sec:gluing}, gluing of metric measure spaces is introduced and the space
of charged probability measures is identified with the space of probability measures on the doubled space.
Section \ref{sec:Wnull} is devoted to the detailed study of various (generalized) metrics on the space of probability measures.
Finally, in Section \ref{sec:proofs}, we present the remaining proofs of the results of Subsections \ref{subsec:gradient_flow_perspective} \& \ref{subsec:gradient_estimate_bochner}.

\medskip

In the sequel, the notion of a metric on a space $X$ will be crucial: it is a real-valued, symmetric function on $X\times X$ which satisfies the triangle inequality, vanishes  on the diagonal and is positive otherwise. We will also use several extensions which satisfy all but one of the above properties:
\begin{itemize}
\item extended metric: also the value $+\infty$ is admitted
\item pseudo-metric: may vanish also outside the diagonal
\item meta-metric: not necessarily vanishing on the diagonal
\item semi-metric: triangle inequality is not requested.
\end{itemize}
As we will encounter as much as 9 generalized ``$W$- metrics'', let us give a short overview where to find the definitions:
\medskip 

\noindent
\begin{itemize}
	\item[-] $W_p$ usual Kantorovich-Wasserstein metric on $\mathcal P_p(X)$
	\item[-] $\tilde W_p$ transportation metric on $\tilde{\mathcal P}_p(Y|X)$, \eqref{eq:Wtilde}
	\item[-] $W_p^0$ transportation-annihilation pre-metric on $\mathcal P^{sub}_p(Y)$, \eqref{eq:Wnull}
	\item[-] $W_p^\flat$ pseudo-metric on $\mathcal P^{sub}_p(Y)$, \eqref{eq:Wflat}
	\item[-] $W_p^\sharp$ transportation-annihilation metric on $\mathcal P^{sub}_p(Y)$, \eqref{eq:Wsharp}
	\item[-] $W_p'$ Kantorovich-Wasserstein metric on $\mathcal P_p(Y')$, based on shortcut metric $d'$, \eqref{eq:Wprime}
	\item[-] $W_p^\dagger$ transportation cost ``over the boundary'' on measures on $Y$ of the same mass, \eqref{eq:Wdagger} 
	\item[-] $W_p^*$ annihilation cost; meta-metric on measures on $X$ of the same mass, \eqref{eq:Wstar}
	\item[-] $\hat{W}_p$ Kantorovich-Wasserstein metric on $\mathcal P_p(\hat X)$, Lemma \ref{lem:phipsi}
\end{itemize}

\section{Metric measure spaces and heat flows} \label{sec:heat_flows_X}

\subsection{Gradients and Dirichlet forms} \label{subsec:dirichlet_forms} 
In this subsection we will introduce some notation and collect some results for Dirichlet forms on the original space $X$. 

Let $(X,d)$ be a complete, separable, length metric space, and let $\mm$ be a Borel measure with full support $\operatorname{supp}\mm=X$, satisfying the exponential integrability condition
\begin{equation} \int_X e^{-cd(x,x^*)^2} \dm(x) < \infty  \label{eq:expint}\end{equation}
for some $c>0,\,x^*\in X$.

The Cheeger energy of a function $f\in L^2(X,\mm)$ is defined as
  \[ \ch(f):= \inf \left\{ \liminf_{k\to \infty} \frac{1}{2}\!\int_{X}\! |\li(f_k)|^2 \dm  \,\Big|\, f_k\!\in\lip(X,d), \text{ s.t. } f_k\to f \text{ in } L^2(X,\mm) \right\}, \]
with domain $\mathcal F:=\{ f\in L^2(X,\mm) \,\big|\, \ch(f)<\infty\}$ (sometimes also denoted by $D(\ch)$ or $\sob(X,d,\mm)$). 
Here $\li(f)(x) :=\limsup_{y\to x} \frac{|f(x)-f(y)|}{d(x,y)}$ denotes the local Lipschitz constant of the function $f$.
Functions $f\in\mathcal F$ have a weak gradient, i.e.\ a function $|\nabla f|\in L^2(X,\mm)$ such that $\ch(f)=\frac{1}{2}\int_X |\nabla f|^2\dm$.

In what follows, we always assume that $X$ is infinitesimally Hilbertian, meaning that $\ch$ is a quadratic form. By polarisation of $\mathcal E(f):= 2\ch(f)$ we get a strongly local Dirichlet form $(\mathcal E, D(\mathcal E))$ on $L^2(X,\mm)$, where $D(\mathcal E):= \mathcal F$. The domain is then a Hilbert space with norm $\| f \|_{\mathcal E}^2 := \|f\|_{L^2(X,\mm)}^2+ \mathcal E(f)$. Thanks to the exponential integrability \eqref{eq:expint}, the Cheeger energy is quasi-regular, cf. \cite{Savare}*{Thm. 4.1}.  

Given an open subset $Y\subset X$ with $\mm(\partial Y)=0$, restricting to functions which vanish on $Z:=X\setminus Y$ quasi-everywhere, we get another Dirichlet form, corresponding to homogeneous Dirichlet ``boundary values'' on $Z$:  
  \begin{equation} 
    \begin{dcases} \label{eq:def_E_0}
   D(\mathcal E^0) := \{ f\in D(\mathcal E) \,\big|\, \tilde f=0 \text{ quasi-everywhere on } Z \}, \\
  \mathcal E^0(f) := \mathcal E(f) \text{ for } f\in D(\mathcal E^0), 
    \end{dcases}
  \end{equation}
where $\tilde f$ is the quasi-continuous representative of $f$.

By general Dirichlet form theory,  a symmetric, strongly continuous contraction semigroup on $L^2(X,\mm)$ is associated with each Dirichlet form. Thus we have a semigroup $(P_t)_{t>0}$ associated with $(\mathcal E, D(\mathcal E))$ and another one $(P_t^0)_{t>0}$ associated with $(\mathcal E^0, D(\mathcal E^0))$. They are related to the Dirichlet forms in the following way: For functions $f,g\in L^2(X,\mm)$ define the approximated forms $\mathcal E_t, \mathcal E_t^0:L^2(X,\mm)\times L^2(X,\mm)\to \mr$ by
  \begin{align*}
   \mathcal E_t(f,g):=& -\frac{1}{t} \int_X g (P_tf-f) \dm, \\
   \mathcal E_t^0(f,g):=& -\frac{1}{t} \int_X g (P_t^0f-f) \dm. 
  \end{align*}
Then we can recover the corresponding Dirichlet form in the following way (see \cite{FOT}*{Lemma 1.3.4}):
 \begin{equation} \begin{dcases}
   D(\mathcal E) = \left\{ f\in L^2(X,\mm) \,\Big|\, \lim_{t\to 0} \mathcal E_t(f,f) <\infty \right\}, \\
   \mathcal E(f,g) =\lim_{t\to 0} \mathcal E_t(f,g), \; \text{for } f,g\in D(\mathcal E).
  \end{dcases} \label{thm:approx_dirichlet_FOT}
 \end{equation}
Further, for $f\in L^2(X,\mm)$ the map $t\mapsto \mathcal E_t(f,f)$ is non-increasing and non-negative.  The same is true for $P_t^0$ and  $(\mathcal E^0, D(\mathcal E^0))$.

\subsection{Heat flows}\label{subsec:heat_flows}

Let us clarify the different heat flows. We have the ``usual'' heat flow and the one with Dirichlet boundary values, and to each a corresponding ``dual'' flow for measures.

\subsubsection*{Heat flow $P_t$ for functions on $X$.}
The heat flow $(t,u_0)\mapsto u_t=P_tu_0$ is defined by means of the semigroup in $L^2(X,\mm)$ corresponding to the Dirichlet form $(\mathcal E, D(\mathcal E))$.

\subsubsection*{Heat flow $\scrp_t$ for probability measures on $X$.}
From now on we additionally assume that $(X,d,\mm)$ is an $\rcd{K}{\infty}$ space. In this case, there is a Brownian motion $(B_t,\PP_x)$ on $X$ and corresponding to this a Markov kernel $p_t(x,A)= \PP_x(B_t\in A)$ (and even a heat kernel), all corresponding to the Dirichlet form $\mathcal E$, see \cite{AGMR}*{Sections 7.1, 7.2}. We use it to define the heat flow for probability measures: for $\mu\in \Pz(X)$ let 
  \[ \scrp_t\mu (A) := \int_X p_t(x,A) \dd\mu(x). \]
This coincides with the $\operatorname{EVI}_K$-flow of the entropy in $(\mathcal P_2(X),W_2)$.
Since the Brownian motion is connected to the Dirichlet form $\mathcal E$ uniquely, we get the following formula for the heat flow on functions through the Markov kernel
  \[ P_tf(x) = \int_X f(y) p_t(x,\dd y). \] 
The heat semigroups $P_t$ and $\scrp_t$ are dual in the following sense: For $f\colon X\to \mr$ bounded Borel, and $\mu\in \Pz(X)$ we have
  \begin{align} \int_X P_tf(x) \dd\mu(x) = \int_X \int_X f(y)p_t(x,\dd y) \dd\mu(x) = \int_X f(y) \int_X p_t(x,\dd y) \dd\mu(x) = \int_X f(y) \dd\scrp_t\mu(y). \label{eq:P_dual}\end{align}
  
The same applies to the heat flows $\hat P_t$ and $\hat\scrp_t$ on $\hat X$ (to be discussed in detail in the next section) and the equivalent flow $\tilde\scrp_t$ on $\tilde{\mathcal P}(Y|X)$, defined by means of the isometry introduced in Lemma \ref{lem:phipsi}.

 \subsubsection*{Heat flow with Dirichlet boundary values on $Y$.
 }

Let $Y\subset X$ be open 
and with $\mm(\partial Y)=0$.
Let us define a stopping time 
  \[ \tau_Z := \inf\{t>0 \,\big|\, B_t\in Z\}, \]
where as before $Z:=X\setminus Y$. Then we can define a Markov kernel
  \[ p_t^0(x,A) := \PP_x(B_t\in A,\, t < \tau_Z). \]
Note that we use Fukushima's convention that a Markov kernel is a \emph{sub}probability on $X$, in particular $p_t^0(x,A)\leq p_t(x,A)$. 
This Markov kernel is associated to the Dirichlet form $(\mathcal E^0,D(\mathcal E^0))$ given by \eqref{eq:def_E_0},
 see \cite{FOT}*{Thm. 4.4.2}. 
With this we can define the heat flows for bounded Borel functions $f\colon X\to \mr$ and measures $\mu\in \Pz^{sub}(X)$ as
  \[ P_t^0f(x) := \int_X f(y) p_t^0(x,\dd y) \]
and 
  \[ \scrp_t^0\mu (A) := \int_X p_t^0(x,A) \dd\mu(x). \]
They also satisfy the duality relation \eqref{eq:P_dual}.
  
  \begin{remark}
  With the help of the Markov kernels, all of these heat flows of measures can be extended to signed, finite Borel measures.
  \end{remark}

\section{Gluing} \label{sec:gluing}

In this section we glue together a finite number of copies of an open subset in a metric measure space ``along the boundary''. We will identify the Cheeger energy and the heat semigroup of the glued space in terms of the original objects.

Beginning with Alexandrov in the 40s, gluing has been studied in connection with curvature bounds a number of times, but mostly in Alexandrov spaces, see \cite{Alexandrov}*{``Verheftungssatz'' Kap. IX, \textsection 3}, \cite{Pogorelov}*{Chapter I, \textsection 11}, \cite{Perelman}*{\textsection 5}, \cite{Petrunin}*{Theorem 2.1}, \cite{Kosovskii}*{Theorem 1.1}. More recently, Schlichting \cites{SchlichtingThesis,SchlichtingArticle} applied the method of \cite{Kosovskii} to show preservation of various curvature bounds (among them Ricci curvature) on manifolds in an \emph{approximate} sense which we will use later to give the Riemannian case as an example. 
In \cite{Paulik}, metric measure spaces supporting Dirichlet forms are glued together. There is also a very recent preprint by Rizzi which shows that gluing does not preserve the dimension in the measure-contraction property \cite{Rizzi}. Apart from curvature bounds, the doubling of manifolds with boundary has also been applied by other communities to produce a related manifold without boundary, see for instance \cite{Atiyah_Bott}.

\subsection{Gluing of metric measure spaces} \label{subsec:gluing}

Take an open subset $Y\subset X$ and denote $Z:= X\setminus Y$. Fix a number $k\in\mn$. We now consider $k$ copies of $X$, denoted by $X^1,\dots,X^k$. 
We will identify these spaces with the original one via maps $\iota_i:X\to X^i, i=1,\dots,k$, which send points $x\in X$ to the corresponding points in $X^i$.
Each $X^i$ is equipped with the metric $d_i:=d\circ (\iota_i^{-1}, \iota_i^{-1})$ and the measure $\mm^i:={\iota_i}_{\#}\mm$, but in this section we usually suppress the indices and write $d$ and $\mm$ on every $X^i$. 
Let $Y^i:= \iota_i(Y),\, Z^i:= \iota_i(Z)$. We define an equivalence relation by identifying the points in the $Z^i$'s:
  \[ X^i\ni x \sim y\in X^j \;\;\; :\Leftrightarrow \;\;\; \left(i=j \textbf{ and } x=y\right) \text{ or } \left(\iota_i^{-1}(x)\in Z \textbf{ and }  \iota_i^{-1}(x) = \iota_j^{-1}(y) \right). \]
The \emph{$k$-gluing of $X$ along $Z$} is now obtained as the quotient of the disjoint union of the $X^i$ under this equivalence relation
  \[ \hat X := \left( \bigsqcup_{i=1}^k X^i \right)/\sim. \]
We can view $X^i$ as a subset of $\hat X$, since the canonical map $\sqcup_{i} X^i\to\hat X$ restricted to $X^i$ is injective. 
In the following, we will also make use of the partition 
  \[ \hat X = \left( \bigsqcup_{i=1}^k Y^i \right) \sqcup Z .\]
Define a metric $\hat d:\hat X\times \hat X\to \mr$ by
  \[ \hat d(x,y) := \begin{dcases}
                     \inf_{p\in Z} \left( d_i(x,\iota_i(p)) + d_j(\iota_j(p),y) \right) , & \text{ if } x\in X^i, y\in X^j, i\neq j \\
                     d(x,y) , & \text{ otherwise}.
                    \end{dcases}
 \]
As a measure we use
$\hat\mm:= \frac{1}{k}\sum_{i=1}^k \mm^i$, 
meaning that for a Borel set $A\subset \hat X$, we consider the restrictions to the copies and set
  \[ \hat\mm (A):= \frac{1}{k}\sum_{i=1}^k \mm^i(A\cap X^i). \]
This turns $\hat X$ into a metric measure space. \newline
For the special case of gluing together only two copies, we call the resulting space the \emph{doubling of} $Y$ in $X$, and as indices we will use $i\in\{+,-\}$.

\begin{Prop} \label{prop:X_hat_mms}
The space $(\hat X,\hat d)$ is a complete and separable length space, and the measure $\hat \mm$ is Borel.

If additionally $X$ is geodesic and $Z$ is proper (i.e.\! all closed balls are compact), then $\hat X$ is geodesic. 
\end{Prop}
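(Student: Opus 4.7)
The plan is to verify each of the three assertions by reducing properties of $\hat X$ to the corresponding properties of $X$, exploiting the fact that each $\iota_i\colon (X,d)\to (\hat X,\hat d)$ is a distance-preserving embedding whose image $X^i$ is closed in $\hat X$. First I would check that $\hat d$ is a metric: symmetry is immediate, and the triangle inequality is verified by case analysis on which copies the three points lie in. The crucial observation in each case is that points $p\in Z$ are identified across all copies, so that $d(\iota_i(p),\iota_i(q))=d(\iota_j(p),\iota_j(q))$ for any $p,q\in Z$, which allows one to compare detours across copies via the triangle inequality inside a single copy.

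For the length-space property, given $x\in X^i$ and $y\in X^j$ with $i\neq j$ and $\eps>0$, I would pick an $\eps$-optimal $p\in Z$ in the defining infimum for $\hat d(x,y)$. Since $X$ is a length space, there are curves in $X^i$ from $x$ to $\iota_i(p)$ and in $X^j$ from $\iota_j(p)$ to $y$ whose lengths are within $\eps$ of $d(x,\iota_i(p))$ and $d(\iota_j(p),y)$, respectively; concatenating them at the identified point yields a curve in $\hat X$ of length within $3\eps$ of $\hat d(x,y)$. The matching lower bound follows by splitting any $\hat X$-curve from $X^i$ to $X^j$ at a crossing of $Z$ and applying the triangle inequality in each copy, which also shows that shortcuts through foreign copies do not help when $x,y$ lie in the same $X^i$.

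For completeness, a Cauchy sequence $(x_n)\subset\hat X$ either has a subsequence contained in a single copy $X^i$, in which case it converges by completeness of $X$ together with closedness of $X^i$, or else the Cauchy property forces $\hat d(x_n,Z)\to 0$: one extracts $p_n\in Z$ with $\hat d(x_n,p_n)\to 0$, observes that $(p_n)$ is Cauchy in the complete subspace $Z\subset X$, and takes the common limit in $Z\subset\hat X$. Separability follows by unioning countable dense subsets of the finitely many copies. The measure $\hat\mm$ is Borel because each $X^i$ is a closed subset of $\hat X$ and $\hat\mm|_{X^i}=\tfrac{1}{k}\iota_{i*}\mm$ is the pushforward of a Borel measure under the isometric embedding $\iota_i$.

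The main obstacle is the geodesic statement, which requires attaining the infimum in the definition of $\hat d(x,y)$ for $x\in X^i$, $y\in X^j$, $i\neq j$. Here I would exploit properness of $Z$: the function $f(p):=d(x,\iota_i(p))+d(y,\iota_j(p))$ is continuous on $Z$, and any sublevel set $\{f\le C\}$ is contained in a closed $d$-ball in $Z$ (centered at any fixed point of $Z$), which is compact by assumption. Consequently $f$ attains its minimum at some $p_0\in Z$, and concatenating $X$-geodesics from $\iota_i^{-1}(x)$ to $p_0$ and from $p_0$ to $\iota_j^{-1}(y)$, transported into the respective copies, produces the required geodesic in $\hat X$. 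For $x,y$ lying in the same copy an $X$-geodesic between them transfers directly into $\hat X$, completing the proof.
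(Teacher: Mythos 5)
Your proposal is correct; the paper's own ``proof'' consists solely of a citation to Bridson--Haefliger (Lemma I.5.24) for the metric properties, and your argument is a faithful, self-contained rendering of that standard gluing proof, including the properness/sublevel-set argument needed to attain the infimum over $Z$ in the geodesic case, which the paper leaves entirely to the reference. The only cosmetic remark is that your completeness dichotomy is redundant: since there are only finitely many copies, every Cauchy sequence automatically has a subsequence in a single $X^i$, so the second branch is never needed.
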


\begin{proof}
 The metric properties are shown in \cite{Bridson_Haefliger}*{p.67f, Lemma 5.24}. 
\end{proof}

The metric properties directly transfer to the Wasserstein space, see for instance \cite{VIL09ii}.

\begin{Cor} \label{cor:W_hat_metric}
 For $p\in [1,\infty)$, the Kantorovich-Wasserstein metric $\hat W_p$ obtained from $\hat d$ is a complete, separable length metric on $\mathcal P_p(\hat X)$
\end{Cor}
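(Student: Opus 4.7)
The plan is to invoke standard transfer results for Kantorovich–Wasserstein spaces, which say that metric properties of the base space propagate to the space of probability measures endowed with $\hat W_p$. Proposition \ref{prop:X_hat_mms} has already done the hard geometric work of showing that $(\hat X,\hat d)$ is a complete, separable length metric space, so the corollary reduces to a three-part translation.

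First I would verify that $\hat W_p$ actually defines a metric on $\mathcal P_p(\hat X)$: nonnegativity and symmetry are immediate from the definition, the triangle inequality follows from the standard gluing lemma for couplings, and positivity relies only on $\hat d$ being a metric. Finite moments are preserved by admissible couplings, so $\hat W_p$ takes finite values throughout $\mathcal P_p(\hat X)$. Second, I would import completeness and separability from $(\hat X,\hat d)$: separability passes to $\mathcal P_p(\hat X)$ by approximating any measure in $\hat W_p$-distance by finitely-supported measures with rational weights on a countable dense subset of $\hat X$, while completeness of $\hat W_p$ follows from tightness arguments (a $\hat W_p$-Cauchy sequence has uniformly controlled $p$-moments and is therefore tight by Prokhorov's theorem on the Polish space $\hat X$, and the limit in weak convergence together with convergence of $p$-moments yields convergence in $\hat W_p$).

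Third, I would show the length-space property: given $\mu_0,\mu_1\in\mathcal P_p(\hat X)$ and $\varepsilon>0$, produce an $\varepsilon$-midpoint. Since $(\hat X,\hat d)$ is a length space, for each pair $(x,y)$ in the support of an (almost) optimal coupling $\pi$ between $\mu_0$ and $\mu_1$ there exists an $\varepsilon$-midpoint $m(x,y)$ with $\hat d(x,m(x,y))^p+\hat d(m(x,y),y)^p\le 2^{-p+1}(\hat d(x,y)+\varepsilon)^p$. A measurable selection of such midpoints (using that $\hat X$ is Polish and the midpoint map can be taken to be Borel) yields a probability measure $\mu_{1/2}$ which is an $\varepsilon$-midpoint in $\hat W_p$; iterating produces approximate geodesics.

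All of these are the standard arguments collected for instance in \cite{VIL09ii}*{Chapter 6}, applied to the Polish length space $(\hat X,\hat d)$; the only place where any care is required is the measurable selection of $\varepsilon$-midpoints in the length-space step, but this is routine in Polish spaces. Hence no step should be an obstacle once Proposition \ref{prop:X_hat_mms} is available, and the corollary follows by direct citation.
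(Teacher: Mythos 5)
Your proposal is correct and matches the paper's approach: the paper simply remarks that the metric properties transfer from $(\hat X,\hat d)$ (established in Proposition \ref{prop:X_hat_mms}) to the Wasserstein space and cites \cite{VIL09ii}, which is exactly the standard transfer argument you spell out in more detail.
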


Now we introduce some notation for dealing with functions on $\hat X$. 
For us it will be useful to consider the functions $u_i:X^i\to\mr$ given by
 $u_i:=u|_{X^i}$.
We consider the mean value
  $\bar u\colon X\to \mr, \;\;\bar u:= \frac{1}{k} \sum_{i=1}^k u_i\circ \iota_i$
and the ``mean free'' functions
  \[ \kringel{u}_i \colon X\to \mr,\;\;\; \accentset{\circ}{u}_i := u_i\circ \iota_i-\bar u. \]
Observe that since the $u_i$ all coincide on $Z$, the $\kringel u_i$ are zero \emph{everywhere} on $Z$. Also, we have 
	\begin{equation} \sum_{i=1}^k \kringel u_i =0. \label{eq:sumkringel0} \end{equation}
	
\noindent {\bf Notation:} During the proof of Lemma \ref{lem:P_GL_semigroup} we will start to 
simplify notation, by mostly omitting the identification maps $\iota_i$. Whenever a function $u_i$ now gets an argument from $X$, it is understood as $u_i\circ \iota_i$ and similar for $\overline u, \kringel{u}_i$ with $\iota_i^{-1}$. 

Let $(\chhat, \hat{\mathcal F})$ denote the Cheeger energy of the space $(\hat X, \hat d, \hat\mm)$. 

\begin{Lemma} \label{lem:X_hat_inf_hilbert}
 The space $\hat X$ is infinitesimally Hilbertian and for every $u\in \hat{\mathcal F}$, the functions $u_i\circ \iota_i$ are in $\mathcal F$ and 
  \[ \chhat(u) = \frac{1}{k}\sum_{i=1}^k \ch(u_i\circ \iota_i). \]
\end{Lemma}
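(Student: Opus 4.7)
The plan is to establish the formula $\chhat(u) = \frac{1}{k}\sum_{i=1}^k \ch(u_i\circ\iota_i)$ via two matching bounds and then derive infinitesimal Hilbertianity from it. Write $f_i := u_i\circ\iota_i$ and $Z := X\setminus Y$; throughout I assume $\mm(Z)=0$, which is the case in the applications.

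\textbf{Lower bound.} Given $u\in\hat{\mathcal F}$, take Lipschitz $v^n\to u$ in $L^2(\hat X,\hat\mm)$ with $\tfrac12\int|\li(v^n)|^2\,d\hat\mm\to\chhat(u)$. Each $v^n_i := v^n\circ\iota_i$ is Lipschitz on $(X,d)$, because $\hat d$ coincides with $d$ on $X^i\times X^i$, and $v^n_i\to f_i$ in $L^2(X,\mm)$ since $k\hat\mm\ge\mm^i$ on $X^i$. For $x\in Y$, a sufficiently small $\hat d$-ball around $\iota_i(x)$ is contained in $X^i$, so $\li_{\hat X}(v^n)(\iota_i(x))=\li_X(v^n_i)(x)$. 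Integrating against $\hat\mm|_{X^i}=\tfrac1k\mm^i$ and using $\mm(Z)=0$ yields
\[
\int_{\hat X}|\li(v^n)|^2\,d\hat\mm \ \ge\ \frac{1}{k}\sum_{i=1}^k\int_X|\li(v^n_i)|^2\,d\mm .
\]
Passing to the $\liminf$ and invoking lower semicontinuity of $\ch$ gives $\chhat(u)\ge\tfrac{1}{k}\sum_i\ch(f_i)$, and in particular $f_i\in\mathcal F$.

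\textbf{Upper bound.} This is the main obstacle: one must produce Lipschitz approximations of the $f_i$ on $X$ that \emph{agree on $Z$}, so that they can be glued into one function on $\hat X$. I will use the canonical splitting $f_i = \bar u + \kringel{u}_i$, where $\bar u := \tfrac1k\sum_j f_j\in\mathcal F$ and $\kringel{u}_i := f_i-\bar u\in\mathcal F$. Because the $f_j|_Z$ all agree, each $\kringel u_i$ vanishes on $Z$ quasi-everywhere, so $\kringel u_i\in D(\mathcal E^0)$. By quasi-regularity of $\mathcal E$ and $\mathcal E^0$, pick Lipschitz $v^n\to\bar u$ on $X$ and Lipschitz $w^n_i\to\kringel u_i$ on $X$ with $w^n_i|_Z = 0$. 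Define $V^n\colon\hat X\to\mr$ by $V^n|_{X^i} := (v^n+w^n_i)\circ\iota_i^{-1}$; on $Z$ this reduces to $v^n|_Z$ for every $i$, so $V^n$ is single-valued and Lipschitz on $(\hat X,\hat d)$, with $V^n\to u$ in $L^2(\hat X,\hat\mm)$. Using the local-Lipschitz identification from the lower-bound argument and $\mm(Z)=0$ one obtains
\[
\int_{\hat X}|\li(V^n)|^2\,d\hat\mm = \frac{1}{k}\sum_i\int_X|\li(v^n+w^n_i)|^2\,d\mm .
\]
What remains is $\tfrac12\int|\li(v^n+w^n_i)|^2\,d\mm\to\ch(f_i)$. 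The naive bound $|\li(v^n+w^n_i)|\le|\li(v^n)|+|\li(w^n_i)|$ loses the angle between the two gradients and yields too much; instead one selects the approximations via a Mazur-type combination, exploiting the quadratic structure of $\ch$ on $X$ (infinitesimal Hilbertianity), so that $|\li(v^n+w^n_i)|^2\,\mm$ approximates $|\nabla f_i|^2\,\mm$ in $L^1$. This delivers $\chhat(u)\le\tfrac{1}{k}\sum_i\ch(f_i)$.

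\textbf{Infinitesimal Hilbertianity.} The map $u\mapsto u_i\circ\iota_i$ is linear. Combined with the quadraticity of $\ch$ on $X$, the formula yields, writing $g_i := v_i\circ\iota_i$ for $v\in\hat{\mathcal F}$,
\[
\chhat(u+v)+\chhat(u-v) = \frac{1}{k}\sum_i\!\bigl[\ch(f_i+g_i)+\ch(f_i-g_i)\bigr] = \frac{2}{k}\sum_i\!\bigl[\ch(f_i)+\ch(g_i)\bigr] = 2\chhat(u)+2\chhat(v),
\]
so $\chhat$ satisfies the parallelogram identity and $(\hat X,\hat d,\hat\mm)$ is infinitesimally Hilbertian. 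The principal difficulty is clearly the energy control in the upper bound, where infinitesimal Hilbertianity of the original space enters crucially through the selection of compatible approximations whose mean and mean-free components, added across the gluing locus, reassemble the correct gradient norm of $f_i$.
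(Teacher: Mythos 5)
Your overall strategy (prove the energy identity by two inequalities, then deduce infinitesimal Hilbertianity from the parallelogram law on $X$) is legitimate, and your lower bound is essentially correct. The genuine gap is in the upper bound, at exactly the point where the whole difficulty of the lemma sits. Gluing forces you into the decomposition $f_i=\bar u+\kringel{u}_i$, and you then need Lipschitz $v^n\to\bar u$ and Lipschitz $w_i^n\to\kringel{u}_i$ with $w_i^n|_Z=0$ such that $\int_X\li(v^n+w_i^n)^2\,\dm\to 2\ch(f_i)$ for every $i$ simultaneously, with the \emph{same} $v^n$. You rightly note that the subadditivity bound $\li(v^n+w_i^n)\le\li(v^n)+\li(w_i^n)$ overshoots, but the proposed repair --- ``a Mazur-type combination exploiting the quadratic structure'' --- is not an argument. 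Mazur's lemma yields convex combinations converging strongly in the energy norm; it gives no control on the pointwise local Lipschitz constants $\li$ of those combinations, and the assertion that approximants can be selected so that $\li(v^n+w_i^n)^2\to|\nabla f_i|^2$ in $L^1$ is precisely the content of the deep strong-approximation/identification theorems for the Cheeger energy. Even those theorems do not cover your situation, because you impose the extra constraints that the approximant of $f_i$ split as $v^n+w_i^n$, that $w_i^n$ vanish identically (not merely quasi-everywhere) on $Z$, and that the $\bar u$-part be shared across all copies. Relatedly, quasi-regularity of $\mathcal E^0$ does not by itself produce Lipschitz approximations of $\kringel{u}_i$ vanishing on all of $Z$; density of such functions in $D(\mathcal E^0)$ is a separate, nontrivial fact.

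The paper avoids all of this by invoking the locality property \eqref{thm:locality} of minimal weak gradients: restricting a Sobolev function to the closure of an open set with negligible boundary preserves Sobolev membership and the weak gradient. Applied to the open sets $Y^i$ and $Z^\circ$ of $\hat X$, this gives both inequalities at once, with no Lipschitz approximation needed. If you wish to keep your relaxation-based route, you must either import that locality theorem (which makes your construction redundant) or honestly build the compatible approximants, e.g.\ by strong approximation of each $f_i$ followed by a cutoff near $Z$ with a capacity/energy estimate for the error. A secondary point: your standing assumption $\mm(Z)=0$ is stronger than the paper's $\mm(\partial Y)=0$; the lemma also covers the case where $Z^\circ$ has positive measure, where the contribution of $Z^\circ$ (on which all the $u_i$ coincide) must be retained, and your lower-bound computation simply discards it.
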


\begin{proof}
 This follows directly from the locality property \eqref{thm:locality} of weak gradients by applying it to the open sets $Y^i$ and $Z^\circ$ (which can be found in \cite{AGS12RIEM}*{Thm. 4.19}):

Given a complete, separable metric space equipped with a Borel measure $(W,d_W,\mm_W)$, and an open subset $\Omega\subset W$ with $\mm_W(\partial\Omega)=0$, we have that the restriction of a function $f\in D(\ch^W)$ to $\overline\Omega$ is a function in $D(\ch^{\overline \Omega})$, and
\begin{equation} |\nabla  (f|_{\overline\Omega})|_{\overline\Omega} = (|\nabla f|_{W})|_{\overline\Omega} \;\;\;\mm\text{-a.e. in } \overline\Omega.  \label{thm:locality}\end{equation}

\end{proof}
In particular, we get a Dirichlet form $(\ehat, D(\ehat))$ on $\hat X$ by polarizing $\ehat(u):= 2\chhat(u)$ and setting $D(\ehat):= \hat{\mathcal F}$.

\begin{Lemma} \label{lem:domains}
 If $u\in D(\ehat)$, then $\bar u\in D(\mathcal E)$ and $\kringel u_i\in D(\mathcal E^0),\, i=1,\dots,n$.
\end{Lemma}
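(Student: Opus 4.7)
The geometric content of the lemma is essentially obvious: on $\hat X$ every function is single-valued on the glued stratum $Z$ (all copies of $Z$ collapse to one), so any mean-free combination $\kringel u_i$ must vanish there. The work is to make this statement rigorous for an arbitrary $u\in D(\ehat)$ and to turn ``vanishes on $Z$'' into ``vanishes quasi-everywhere on $Z$'' with respect to $(\mathcal E,D(\mathcal E))$.

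First, by Lemma~\ref{lem:X_hat_inf_hilbert} each pullback $u_i\circ\iota_i$ already lies in $\mathcal F = D(\mathcal E)$. Since $D(\mathcal E)$ is a linear space, both $\bar u=\frac{1}{k}\sum_{i=1}^k u_i\circ\iota_i$ and each $\kringel u_i = u_i\circ\iota_i-\bar u$ automatically belong to $D(\mathcal E)$. This settles the claim for $\bar u$ and reduces the lemma to showing that each $\kringel u_i$ vanishes quasi-everywhere on $Z$.

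For the quasi-everywhere vanishing I would argue by approximation. Choose a sequence $u^{(n)}\in\lipb(\hat X,\hat d)\cap L^2(\hat X,\hat\mm)$ with $u^{(n)}\to u$ in the graph norm of $\ehat$. Such a sequence exists: take a minimizing sequence in the defining infimum of $\chhat(u)$, which converges to $u$ in $L^2$ with bounded Cheeger energy, and apply Mazur's lemma (equivalently Banach--Saks in the Hilbert space $D(\ehat)$) together with the lower semicontinuity of $\chhat$ to upgrade $L^2$-convergence to convergence in the graph norm. Since $\iota_i\colon(X,d)\to(\hat X,\hat d)$ is an isometric embedding (the metric $\hat d$ restricted to $X^i$ coincides with $d$), each $u^{(n)}_i\circ\iota_i$ is Lipschitz on $(X,d)$. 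By construction of $\hat X$ the points $\iota_i(z)$ and $\iota_j(z)$ are identified for every $z\in Z$, so $u^{(n)}$ takes a single value at each point of $Z$, and therefore $u^{(n)}_i\circ\iota_i(z)=u^{(n)}_j\circ\iota_j(z)$ for all $i,j$ and every $z\in Z$. Hence $\overline{u^{(n)}}=u^{(n)}_i\circ\iota_i$ pointwise on $Z$ and $\kringel{u^{(n)}}_i\equiv 0$ on $Z$. Being continuous, $\kringel{u^{(n)}}_i$ then vanishes quasi-everywhere on $Z$, so $\kringel{u^{(n)}}_i\in D(\mathcal E^0)$.

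Finally, applying Lemma~\ref{lem:X_hat_inf_hilbert} to $u^{(n)}-u$ yields $u^{(n)}_i\circ\iota_i\to u_i\circ\iota_i$ in $D(\mathcal E)$ for each $i$, and by continuity of the linear operations also $\kringel{u^{(n)}}_i\to \kringel u_i$ in $D(\mathcal E)$. The space $D(\mathcal E^0)$ is a closed subspace of $D(\mathcal E)$ in the graph norm (the q.e.-vanishing constraint is preserved under $\mathcal E$-convergence), so the limit satisfies $\kringel u_i\in D(\mathcal E^0)$, finishing the proof. The only step I expect to be genuinely nontrivial is the graph-norm density of Lipschitz functions in $D(\ehat)$, which requires the Mazur/Banach--Saks upgrade from the defining $L^2$-approximation; the rest is bookkeeping with the gluing relation.
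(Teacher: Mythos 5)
Your proposal is correct. For the first claim ($\bar u\in D(\mathcal E)$) you argue exactly as the paper does: Lemma \ref{lem:X_hat_inf_hilbert} puts each $u_i\circ\iota_i$ in $D(\mathcal E)$, and linearity does the rest. For the second claim the paper is far terser than you are -- it simply states that the assertion ``follows from the fact that all the $u_i$'s coincide on $Z$'' -- which glosses over precisely the point you isolate, namely that membership in $D(\mathcal E^0)$ requires the \emph{quasi-continuous representative} of $\kringel u_i$ to vanish quasi-everywhere on $Z$, a condition that is not literally meaningful for an $L^2$-class when $\mm(Z)=0$. Your route -- approximate $u$ in the $\ehat$-graph norm by Lipschitz functions (graph-norm density of Lipschitz functions in $D(\chhat)$ holds by the relaxation definition of the Cheeger energy plus the Hilbertian upgrade via Mazur/Banach--Saks, or equivalently weak convergence plus convergence of energies), observe that for continuous functions the mean-free parts vanish identically on $Z$ and hence q.e., and conclude by closedness of $D(\mathcal E^0)$ in the $\mathcal E_1$-norm -- is a legitimate and complete way to fill this in. The one small technical point to watch is integrability of the approximants when $\mm(X)=\infty$: you should take Lipschitz functions with $\li(u^{(n)})\in L^2$, which the standard density statement provides, rather than merely bounded Lipschitz functions. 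With that caveat, your argument is a rigorous version of the paper's one-line assertion rather than a genuinely different proof.
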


\begin{proof}
 Being in $D(\ehat)$ means $\chhat(u)<\infty$. By the previous lemma, this implies
  \[ \sum_{i=1}^k \frac{1}{k}\ch(u_i\circ \iota_i) = \chhat(u) <\infty. \]
 Since each term is non-negative, $\ch(u_i\circ \iota_i)<\infty$ for every $i=1,\dots,k$. Thus $u_i\circ \iota_i\in D(\mathcal E)$ and also the linear combination $\bar u\in D(\mathcal E)$.
 
 The other assertion follows from the fact that all the $u_i$'s coincide on $Z$.    
\end{proof}

Now we are going to define a semigroup on $\hat X$ and we will show that it actually is the one corresponding to $\ehat$.
\begin{Def} \label{def:P_GL_semigroup}
 The \emph{glued semigroup} $P_t^{GL}: L^2(\hat X,\hat\mm)\to L^2(\hat X,\hat\mm)$ is defined by
  \[ P_t^{GL}u(x) := P_t\bar u(\iota_i^{-1}(x)) + P_t^0\kringel u_i(\iota_i^{-1}(x)), \,\,\text{ if } x\in X^i ,\,\, i=1,\dots,k\\. \]
 Also, define the \emph{approximated glued Dirichlet form} $\mathcal E_t^{GL} : L^2(\hat X,\hat\mm)\times L^2(\hat X,\hat\mm) \to \mr$,
  \[ \mathcal E_t^{GL}(u,v) := -\frac{1}{t}\int_{\hat X} v (P_t^{GL}u-u) \dd\hat\mm. \]
\end{Def}

\begin{remark}
 Observe that $P_t^{GL}$ is well-defined, since $u_i=u_j$ on $Z$ for every $i,j=1,\dots,k$.
\end{remark}

\begin{Lemma} \label{lem:P_GL_semigroup}
 $(P_t^{GL})_{t>0}$ is a symmetric, strongly continuous contraction semigroup on $L^2(\hat X,\hat\mm)$. In particular, there is a corresponding Dirichlet form $(\mathcal E^{GL}, D(\mathcal E^{GL}))$ connected to $P_t^{GL}$ via
  \[ \begin{dcases}
   D(\mathcal E^{GL}) = \left\{ u\in L^2(\hat X,\hat\mm) \,\Big|\, \lim_{t\to 0} \mathcal E_t^{GL}(u) <\infty \right\} \\
   \mathcal E^{GL}(u,v) =\lim_{t\to 0} \mathcal E_t^{GL}(u,v), \; \text{for } u,v\in D(\mathcal E^{GL}).
  \end{dcases}\]
\end{Lemma}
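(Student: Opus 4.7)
The plan is to exploit the decomposition $u_i = \bar u + \kringel u_i$ on each copy $X^i$, together with the orthogonality identity
\[ \|u\|_{L^2(\hat X,\hat\mm)}^2 = \|\bar u\|_{L^2(X,\mm)}^2 + \frac{1}{k}\sum_{i=1}^k \|\kringel u_i\|_{L^2(X,\mm)}^2, \]
which is immediate from $\hat\mm=\frac1k\sum \mm^i$ and $\sum_i \kringel u_i = 0$. In particular, for every $u\in L^2(\hat X,\hat\mm)$ one has $\bar u\in L^2(X,\mm)$ and $\kringel u_i\in L^2(X,\mm)$ for every $i$, so the definition of $P_t^{GL}u$ makes sense.

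First, I would compute the mean and mean-free parts of $P_s^{GL}u$ itself. Since the $i$-th restriction is $P_s\bar u+P_s^0\kringel u_i$, averaging over $i$ and using $\sum_i \kringel u_i=0$ gives $\overline{P_s^{GL}u}=P_s\bar u$ and $\kringel{(P_s^{GL}u)}_i=P_s^0\kringel u_i$. Iterating yields
\[ P_t^{GL}(P_s^{GL}u)_i = P_tP_s\bar u + P_t^0 P_s^0\kringel u_i = P_{t+s}\bar u + P_{t+s}^0\kringel u_i = (P_{t+s}^{GL}u)_i, \]
so the semigroup property is inherited from $P_t$ and $P_t^0$. Contraction follows at once from the orthogonality identity applied to $P_t^{GL}u$ together with the individual contractions $\|P_t\bar u\|\le\|\bar u\|$ and $\|P_t^0\kringel u_i\|\le\|\kringel u_i\|$; strong continuity is obtained the same way, applying the identity to $P_t^{GL}u-u$ (whose mean is $P_t\bar u-\bar u$ and whose mean-free parts are $P_t^0\kringel u_i-\kringel u_i$) and invoking strong continuity of $P_t$ and $P_t^0$ in $L^2(X,\mm)$.

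For symmetry, I would expand
\[ \int_{\hat X} v\,P_t^{GL}u\,\dd\hat\mm = \frac1k\sum_i \int_X (\bar v+\kringel v_i)\bigl(P_t\bar u + P_t^0\kringel u_i\bigr)\dm, \]
and handle the four resulting terms. The pure mean term yields $\int_X\bar v\,P_t\bar u\dm$, the pure mean-free term yields $\frac1k\sum_i\int_X \kringel v_i P_t^0\kringel u_i\dm$, and the two mixed terms vanish: one becomes $\int_X\bar v\,P_t^0(\sum_i\kringel u_i)\dm=0$ by $\sum_i\kringel u_i=0$, and analogously for the other. The surviving two expressions are manifestly symmetric in $u,v$ thanks to the symmetry of $P_t$ and $P_t^0$ on $L^2(X,\mm)$.

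The statement about the associated form is then automatic: any symmetric, strongly continuous contraction semigroup on a Hilbert space corresponds to a closed, densely defined, non-negative symmetric bilinear form, and the representation via $\lim_{t\to 0}\mathcal E_t^{GL}$ is the content of \cite{FOT}*{Lemma 1.3.4} cited in Subsection \ref{subsec:dirichlet_forms}. I do not foresee a real obstacle; the only point requiring care is the bookkeeping that $\bar u$ and each $\kringel u_i$ genuinely lie in $L^2(X,\mm)$ so that $P_t\bar u$ and $P_t^0\kringel u_i$ are well defined and independent of the representatives, which is handled by the orthogonality identity above.
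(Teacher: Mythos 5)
Your argument is correct in its main computations, but it takes a genuinely slicker route than the paper for the contraction property. The key observation you add is the orthogonal decomposition $\|u\|_{L^2(\hat X,\hat\mm)}^2=\|\bar u\|_{L^2(X,\mm)}^2+\frac1k\sum_i\|\kringel u_i\|_{L^2(X,\mm)}^2$, valid because $\sum_i\kringel u_i=0$; combined with your identification $\overline{P_t^{GL}u}=P_t\bar u$ and $\kringel{(P_t^{GL}u)}_i=P_t^0\kringel u_i$ (which also makes the semigroup property a one-liner), the $L^2$-contraction and strong continuity drop out immediately from the corresponding properties of $P_t$ and $P_t^0$ separately. The paper instead proves $L^2$-contraction the long way: it first shows that $P_t^{GL}$ is Markovian (positivity preserving and $L^\infty$-contractive), deduces $L^1$-contractivity by duality and symmetry, and then interpolates via Riesz--Thorin. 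Your symmetry computation is essentially the paper's, just organized through the $\bar u/\kringel u_i$ splitting so that the cross terms visibly vanish.

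There is, however, one point your shortcut loses: the lemma asserts that the associated form is a \emph{Dirichlet} form, not merely a closed symmetric non-negative form. The general correspondence you invoke (and \cite{FOT}*{Lemma 1.3.4}) gives the closed form and the representation via $\lim_{t\to0}\mathcal E_t^{GL}$ from symmetry, strong continuity and $L^2$-contractivity alone; but to conclude that this form is Markovian one needs the sub-Markov property $0\le u\le1\Rightarrow 0\le P_t^{GL}u\le1$, which the paper establishes (and which your proof never touches, since the orthogonality identity bypasses it). You should add this verification: for $0\le u\le 1$ one has, on $X^i$,
\[ P_t^{GL}u = P_t\bar u + P_t^0(u_i-\bar u) \le P_t\bar u + P_t^0\,\mathbf 1 - P_t^0\bar u \le P_t\bar u + P_t(1-\bar u)\le 1, \]
using $p_t^0(x,\cdot)\le p_t(x,\cdot)$ applied to the nonnegative function $1-\bar u$, and similarly $P_t^{GL}u\ge P_t\bar u + P_t^0 u_i - P_t^0\bar u\ge P_t\bar u-P_t^0\bar u\ge0$. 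With that supplement your proof is complete and, in my view, cleaner than the original.
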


\begin{proof}
 {\it Symmetry:} We use that $P_t$ and $P_t^0$ are symmetric with respect to $\mm$:
 \begin{align*}
  \int_{\hat X} u P_t^{GL}v \dd\hat\mm =& \sum_{i=1}^k \frac{1}{k} \int_{X^i} u_i \left((P_t\bar v)\circ \iota_i^{-1} + (P_t^0\kringel v_i)\circ \iota_i^{-1}\right) \dm^i \\
    =& \sum_{i=1}^k \frac{1}{k} \int_{X} \bar v P_t(u_i\circ \iota_i) + \kringel v_i P_t^0(u_i\circ \iota_i) \dm \\
    =& \sum_{i,j=1}^k \frac{1}{k^2} \int_{X} (v_j\circ \iota_j) P_t(u_i\circ \iota_i) + (v_i\circ \iota_i) P_t^0(u_i\circ \iota_i) - (v_j\circ \iota_j) P_t^0(u_i\circ \iota_i) \dm \\
    =& \sum_{i,j=1}^k \frac{1}{k^2} \int_{X} (v_j\circ \iota_j) P_t(u_i\circ \iota_i) + (v_j\circ \iota_j) P_t^0(u_j\circ \iota_j) - (v_j\circ \iota_j) P_t^0(u_i\circ \iota_i) \dm \\
    =& \sum_{j=1}^k \frac{1}{k} \int_{X} (v_j\circ \iota_j) \frac{1}{k}\sum_{i=1}^k P_t(u_i\circ \iota_i) + (v_j\circ \iota_j) \left(P_t^0(u_j\circ \iota_j) - \frac{1}{k}\sum_{i=1}^k P_t^0(u_i\circ \iota_i)\right) \dm \\
    =& \sum_{j=1}^k \frac{1}{k} \int_{X} (v_j\circ \iota_j) (P_t\bar u + P_t^0 \kringel u_j ) \dm 
    = \int_{\hat X} vP_t^{GL}u \dd\hat\mm.
 \end{align*}
From now on we will apply the abuse of notation introduced before. This is in order to improve readability.
 
 {\it Semigroup property:} 
 First observe that on $X^i$ we have $P_0^{GL}u=P_0\bar u + P_0^0\kringel u_i= \bar u + u_i-\bar u=u$.
 Denote $v:= P_t^{GL}u$. Then $v_i = P_t\bar u + P_t^0 \kringel u_i$. Now on $X^i$
 \begin{align*}
  P_s^{GL}P_t^{GL}u =& P_s^{GL}v = P_s\bar v + P_s^0\kringel v_i 
    = \frac{1}{k}\sum_{j=1}^k P_s v_j + P_s^0v_i - \frac{1}{k}\sum_{j=1}^k P_s^0v_j \\
    =& \frac{1}{k}\sum_{j=1}^k P_s (P_t\bar u + P_t^0 \kringel u_j) + P_s^0(P_t\bar u + P_t^0 \kringel u_i) - \frac{1}{k}\sum_{j=1}^k P_s^0(P_t\bar u + P_t^0 \kringel u_j) \\
    =& \frac{1}{k} \sum_{j=1}^k P_{s+t}\bar u + \underbrace{\frac{1}{k} \sum_{j=1}^k P_sP_t^0\kringel u_j}_{=0} + P_s^0P_t\bar u + P_{s+t}^0 \kringel u_i 
      - \frac{1}{k}\sum_{j=1}^k P_s^0P_t\bar u - \underbrace{\frac{1}{k}\sum_{j=1}^k P_{s+t}^0 \kringel u_j}_{=0} \\
    =& P_{s+t}\bar u + P_{s+t}^0 \kringel u_i 
    = P_{s+t}^{GL}u,
 \end{align*}
where we used \eqref{eq:sumkringel0}. 

 {\it Contraction:} To show the contraction property in $L^2(\hat X,\hat\mm)$, we first show that $P_t^{GL}$ is Markovian (i.e.\! positivity preserving and $L^\infty$-contractive in $L^2\cap L^\infty$). By symmetry of $P_t^{GL}$, we also get $L^1$-contractivity. Using the Riesz-Thorin interpolation theorem, we finally get contractivity in $L^2$. 
 
 Let $u\in L^2\cap L^\infty (\hat X,\hat\mm)$ with $0\leq u\leq 1$. Then also $0\leq u_i,\bar u\leq 1$. Then, on $X^i$,
  \[ P_t^{GL}u = P_t\bar u + P_t^0\kringel u_i \leq P_t\bar u+ P_t \kringel u_i = P_t u_i \leq 1. \]
 For the other side, we have to show $P_t^{GL}u\geq 0$, which is equivalent to
  \[ P_t^0 \bar u \leq P_t\bar u + P_t^0u_i. \]
 But this holds true because $P_t^0f\leq P_tf$ for every $f\in L^2$, and $P_t^0 u_i\geq 0$.
 
 Now we use that $L^1$ is a subspace of the dual of $L^\infty$. For $u\in L^1\cap L^2(\hat X,\hat\mm)$, consider the bounded, linear functional $\ell: L^\infty(\hat X,\hat\mm)\to \mr,\,  \ell(v):= \int_{\hat X} v P_t^{GL}u\dd\hat\mm$. The dual space norm of $\ell$ coincides with the $L^1$-norm of $P_t^{GL}u$, thus
  \begin{align*}
   \|P_t^{GL}u\|_{L^1(\hat X)} =& \sup_{\|v\|_{L^\infty(\hat X)}\leq 1} \int_{\hat X} vP_t^{GL}u \dd\hat\mm =  \sup_{\|v\|_{L^\infty(\hat X)}\leq 1} \int_{\hat X} P_t^{GL}v u \dd\hat\mm \\
    \leq&  \sup_{\|v\|_{L^\infty(\hat X)}\leq 1} \int_{\hat X} vu \dd\hat\mm = \|u\|_{L^1(\hat X)}.
  \end{align*}
 Here we used the symmetry of $P_t^{GL}$ and the $L^\infty$-contractivity.
 
 Hence $P_t^{GL}$ is a contraction in $L^1\cap L^2$ and also in $L^\infty\cap L^2$. By the Riesz-Thorin interpolation theorem, it is then also a contraction in $L^2$.
 
 {\it Strong continuity:} This follows directly from the strong continuity of $P_t$ and $P_t^0$:
  \begin{align*}
   \|P_t^{GL}u-u\|_{L^2(\hat X)}^2 =& \int_{\hat X} \left( P_t^{GL}u-u \right)^2 \dd\hat\mm = \sum_{i=1}^k \frac{1}{k}\int_{X^i} \left( P_t\bar u + P_t^0\kringel u_i - u_i \right)^2\dm^i \\
    =& \sum_{i=1}^k \frac{1}{k} \int_{X} \left( P_t\bar u -\bar u + P_t^0\kringel u_i - \kringel u_i \right)^2\dm \\
    \leq & \sum_{i=1}^k \frac{2}{k} \int_{X} \left( P_t\bar u -\bar u \right)^2 + \left(P_t^0\kringel u_i - \kringel u_i \right)^2\dm \\
    =& \sum_{i=1}^k \frac{2}{k} \left( \|P_t\bar u- \bar u\|_{L^2(X)}^2 + \|P_t^0\kringel u_i-\kringel u_i\|_{L^2(X)}^2 \right) 
     \longrightarrow 0 
  \end{align*}
  as $ t\to 0$.
\end{proof}

\begin{Lemma} \label{lem:approxE}
For every $u,v\in L^2(\hat X,\hat\mm)$:
  \begin{equation}  \mathcal E_t^{GL}(u,v) =  \mathcal E_t(\bar u,\bar v) + \frac{1}{k}\sum_{i=1}^k \mathcal E_t^0(\kringel u_i,\kringel v_i). \label{eq:approxE} \end{equation}
\end{Lemma}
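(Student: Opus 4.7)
The plan is a direct computation starting from the definition of $\mathcal E_t^{GL}$, unfolding $P_t^{GL}$ piece by piece on the copies $X^i$, and then using the decomposition $u_i = \bar u + \kringel u_i$ (suppressing the identification maps $\iota_i$) together with the identity $\sum_{i=1}^k \kringel u_i = 0$ from \eqref{eq:sumkringel0} to collect the terms into the claimed sum.

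First I would write
\[
\mathcal E_t^{GL}(u,v) = -\frac{1}{t}\int_{\hat X} v\,(P_t^{GL}u-u)\,\dd\hat\mm = -\frac{1}{t}\sum_{i=1}^k\frac{1}{k}\int_{X} v_i\,\bigl(P_t\bar u + P_t^0\kringel u_i - u_i\bigr)\,\dm,
\]
using the definition of $P_t^{GL}$ on $X^i$ and $\hat\mm = \frac{1}{k}\sum_i \mm^i$. Since $u_i = \bar u + \kringel u_i$, the bracket splits as $(P_t\bar u - \bar u) + (P_t^0\kringel u_i - \kringel u_i)$, giving two pieces to handle.

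For the first piece, $\sum_i \frac{1}{k}v_i = \bar v$, so summing produces exactly $-\frac{1}{t}\int_X \bar v\,(P_t\bar u - \bar u)\,\dm = \mathcal E_t(\bar u,\bar v)$. For the second piece, I would further decompose $v_i = \bar v + \kringel v_i$. The contribution from $\kringel v_i$ already yields $\frac{1}{k}\sum_i \mathcal E_t^0(\kringel u_i,\kringel v_i)$. The remaining cross term is
\[
-\frac{1}{t}\sum_{i=1}^k \frac{1}{k}\int_X \bar v\,(P_t^0 \kringel u_i - \kringel u_i)\,\dm = -\frac{1}{t}\int_X \bar v\,\Bigl(P_t^0\bigl(\tfrac{1}{k}\sum_i\kringel u_i\bigr) - \tfrac{1}{k}\sum_i\kringel u_i\Bigr)\,\dm,
\]
which vanishes by linearity of $P_t^0$ and $\sum_i \kringel u_i = 0$. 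Collecting the surviving terms gives \eqref{eq:approxE}.

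I do not anticipate any real obstacle; the identity is a bookkeeping exercise combining (a) the decomposition $u_i = \bar u + \kringel u_i$ (linear in $u$, so it automatically respects $P_t$, $P_t^0$ and the pairing against $v$), and (b) the vanishing of $\sum_i \kringel u_i$, which is precisely what decouples the Neumann-type and Dirichlet-type contributions. The only mild care needed is to remember the identification of functions on $X^i$ with functions on $X$ through $\iota_i$ (notationally suppressed), and to use symmetry of $P_t^0$ only implicitly through the fact that $P_t^0$ acts linearly.
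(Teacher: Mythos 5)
Your proposal is correct and matches the paper's proof essentially verbatim: both unfold $P_t^{GL}$ on the copies, rewrite $u_i=\bar u+\kringel u_i$ and $v_i=\bar v+\kringel v_i$, and kill the cross term $\int_X\bar v\,(P_t^0\kringel u_i-\kringel u_i)\,\dm$ summed over $i$ using linearity of $P_t^0$ and \eqref{eq:sumkringel0}. No gaps.
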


\begin{proof}
 We just compute
 \begin{align*}
   \mathcal E_t^{GL}(u,v) =& -\frac{1}{t} \int_{\hat X} v\left(P_t^{GL}u-u\right) \dd\hat\mm \\
    =& -\sum_{i=1}^k \frac{1}{kt}\int_{X^i} v_i\left( P_t\bar u + P_t^0\kringel u_i-u_i \right) \dm^i 
    = -\sum_{i=1}^k \frac{1}{kt}\int_{X} v_i\left( P_t\bar u - \bar u + P_t^0\kringel u_i-\kringel u_i \right) \dm \\
    =& -\frac{1}{t} \int_{X} \bar v\left( P_t\bar u-\bar u \right)\dm - \sum_{i=1}^k \frac{1}{k} \int_X v_i\left( P_t^0 \kringel u_i- \kringel u_i \right) \dm 
      + \underbrace{\sum_{i=1}^k \frac{1}{k} \int_X \bar v\left( P_t^0 \kringel u_i- \kringel u_i \right) \dm}_{=0 \text{ by } \eqref{eq:sumkringel0}}\\
    =& \mathcal E_t(\bar u,\bar v) + \frac{1}{k}\sum_{i=1}^k \mathcal E_t^0(\kringel u_i,\kringel v_i). 
  \end{align*}
\end{proof}

\begin{Lemma} \label{lem:domains2}
If $u\in D(\mathcal E^{GL})$, then $\bar u\in D(\mathcal E)$ and $\kringel u_i\in D(\mathcal E^0), \, i=1,\dots,k$. 
\end{Lemma}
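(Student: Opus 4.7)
The plan is to reduce the claim to the analogous statement from Lemma \ref{lem:domains} via the identity in Lemma \ref{lem:approxE}. The key point is that all three approximated forms appearing there are non-negative, so dominating one by the sum is automatic.

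First I would verify the $L^2$-membership, which is the easy part: since $u\in L^2(\hat X,\hat\mm)$, each restriction $u_i$ lies in $L^2(X^i,\mm^i)$, hence $u_i\circ\iota_i\in L^2(X,\mm)$, and by linear combinations both $\bar u$ and $\kringel u_i = u_i\circ\iota_i - \bar u$ lie in $L^2(X,\mm)$.

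Next I would exploit Lemma \ref{lem:approxE}, which gives the pointwise-in-$t$ identity
\[ \mathcal E_t^{GL}(u,u) = \mathcal E_t(\bar u,\bar u) + \frac{1}{k}\sum_{i=1}^k \mathcal E_t^0(\kringel u_i,\kringel u_i). \]
Each of the terms on the right is non-negative (since $P_t$, resp.\ $P_t^0$, is a symmetric contraction on $L^2$, so that $\int f(P_tf-f)\,\dm\le 0$), and by the general fact recalled in \eqref{thm:approx_dirichlet_FOT} the maps $t\mapsto\mathcal E_t(\bar u,\bar u)$ and $t\mapsto \mathcal E_t^0(\kringel u_i,\kringel u_i)$ are monotone non-increasing as $t\downarrow 0$. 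In particular each of these terms is bounded above by $\mathcal E_t^{GL}(u,u)$.

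Now $u\in D(\mathcal E^{GL})$ means, by Lemma \ref{lem:P_GL_semigroup}, that $\lim_{t\to 0}\mathcal E_t^{GL}(u,u)<\infty$. Passing to the limit in the inequalities
\[ \mathcal E_t(\bar u,\bar u)\le \mathcal E_t^{GL}(u,u), \qquad \tfrac{1}{k}\mathcal E_t^0(\kringel u_i,\kringel u_i)\le \mathcal E_t^{GL}(u,u), \]
both limits exist (by monotonicity) and are finite. Applying the characterization \eqref{thm:approx_dirichlet_FOT} for $\mathcal E$ and its analogue for $\mathcal E^0$ then yields $\bar u\in D(\mathcal E)$ and $\kringel u_i\in D(\mathcal E^0)$ for every $i$. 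There is no serious obstacle here; the only subtle point is making sure that the sign/monotonicity discussion applies also to $\mathcal E^0$, which follows from the corresponding Markovian semigroup properties of $P_t^0$ recalled in Subsection \ref{subsec:dirichlet_forms}.
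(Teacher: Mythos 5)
Your proof is correct and follows essentially the same route as the paper's: apply the identity of Lemma \ref{lem:approxE}, use non-negativity and monotonicity of the approximated forms to pass to the limit termwise, and invoke the characterization \eqref{thm:approx_dirichlet_FOT}. (One small phrasing slip: the maps $t\mapsto\mathcal E_t(f,f)$ are non-increasing in $t$, so their values are non-\emph{decreasing} as $t\downarrow 0$; your argument only needs existence of the limit plus the uniform bound by $\mathcal E_t^{GL}(u,u)$, so this does not affect the conclusion.)
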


\begin{proof}
 By definition and \eqref{eq:approxE},
  \[ \infty > \mathcal E^{GL}(u) = \lim_{t\to 0} \mathcal E_t^{GL}(u) = \lim_{t\to 0}\left( \mathcal E_t(\bar u,\bar v) + \frac{1}{k}\sum_{i=1}^k \mathcal E_t^0(\kringel u_i,\kringel v_i) \right). \]
 Since the sum converges and every term is non-negative and non-decreasing as $t\to 0$, the terms converge and we can interchange sum and limit to get
  \[ \infty > \mathcal E^{GL}(u) = \lim_{t\to 0} \mathcal E_t(\bar u,\bar v) + \frac{1}{k}\sum_{i=1}^k \lim_{t\to 0} \mathcal E_t^0(\kringel u_i,\kringel v_i) =  \mathcal E(\bar u,\bar v) + \frac{1}{k}\sum_{i=1}^k \mathcal E^0(\kringel u_i,\kringel v_i). \]
\end{proof}

Now we come to the main theorem of this section, which identifies the semigroup $P_t^{GL}$ with the heat semigroup $\hat P_t$ associated to $\ehat.$
\begin{Thm}\label{thm:E_GL=E_hat}
The semigroups $P_t^{GL}$ and $\hat P_t$ coincide on $L^2(\hat X,\hat\mm)$ .
\end{Thm}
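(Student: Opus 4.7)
The strategy is to identify the two semigroups through the Dirichlet forms they generate: by general theory, a symmetric, strongly continuous contraction semigroup on $L^2$ is uniquely determined by its associated Dirichlet form, so it suffices to show that $\mathcal E^{GL}$ (from Lemma \ref{lem:P_GL_semigroup}) and $\hat{\mathcal E}$ coincide as quadratic forms on $L^2(\hat X,\hat\mm)$, i.e.\ both domain and values agree.

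First I would compute $\mathcal E^{GL}(u)$ for $u\in D(\mathcal E^{GL})$. Starting from Lemma~\ref{lem:approxE} and passing to the limit $t\to 0$ via the standard approximation formula \eqref{thm:approx_dirichlet_FOT} applied to each of the (non-negative, non-decreasing as $t\downarrow 0$) summands individually, I get
\[ \mathcal E^{GL}(u) = \mathcal E(\bar u) + \frac{1}{k}\sum_{i=1}^k \mathcal E^0(\kringel u_i). \]
On the other hand, Lemma~\ref{lem:X_hat_inf_hilbert} gives
\[ \hat{\mathcal E}(u) = \frac{1}{k}\sum_{i=1}^k \mathcal E(u_i\circ \iota_i) = \frac{1}{k}\sum_{i=1}^k \mathcal E(\bar u + \kringel u_i). \]
Expanding by bilinearity,
\[ \hat{\mathcal E}(u) = \mathcal E(\bar u) + 2\,\mathcal E\!\Bigl(\bar u, \tfrac{1}{k}\sum_{i=1}^k \kringel u_i\Bigr) + \frac{1}{k}\sum_{i=1}^k \mathcal E(\kringel u_i). \]
The cross term vanishes by the identity \eqref{eq:sumkringel0}, and since each $\kringel u_i\in D(\mathcal E^0)$ by Lemma~\ref{lem:domains}, we have $\mathcal E(\kringel u_i)=\mathcal E^0(\kringel u_i)$. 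This yields $\hat{\mathcal E}(u) = \mathcal E^{GL}(u)$ whenever either side is finite.

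Next I would verify that $D(\mathcal E^{GL}) = D(\hat{\mathcal E})$. The inclusion $D(\hat{\mathcal E}) \subset D(\mathcal E^{GL})$ is immediate from Lemma~\ref{lem:domains} together with the explicit formula above. For the reverse inclusion, take $u\in D(\mathcal E^{GL})$; by Lemma~\ref{lem:domains2}, $\bar u\in D(\mathcal E)$ and each $\kringel u_i \in D(\mathcal E^0)\subset D(\mathcal E)$, so each $u_i\circ \iota_i = \bar u + \kringel u_i \in D(\mathcal E)=\mathcal F$. Using the locality property \eqref{thm:locality} on the open sets $Y^i$ of $\hat X$ (applied in reverse: one glues together the functions $u_i\circ \iota_i\in\mathcal F$ to reconstruct $u\in\hat{\mathcal F}$ by checking that the squared slopes assemble into an $L^2(\hat X,\hat\mm)$-function), one obtains $u\in \hat{\mathcal F}=D(\hat{\mathcal E})$.

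Thus $\mathcal E^{GL}=\hat{\mathcal E}$ as Dirichlet forms. By the uniqueness of the semigroup associated to a Dirichlet form, $P_t^{GL}=\hat P_t$ on $L^2(\hat X,\hat\mm)$. The step I expect to be most delicate is the reverse domain inclusion: one must confirm that having all pieces $u_i\circ \iota_i\in \mathcal F$ really implies $u\in \hat{\mathcal F}$, i.e.\ that the Cheeger energy of $u$ on the glued space is indeed recovered from the energies of its pieces on the copies---this is precisely where the assumption $\hat\mm(\partial Y^i)=0$ and the locality of weak gradients enter.
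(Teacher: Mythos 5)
Your proposal is correct and follows essentially the same route as the paper: identify the two semigroups by showing $\mathcal E^{GL}=\hat{\mathcal E}$ with equal domains, using Lemma \ref{lem:approxE} to pass to the limit $t\to 0$ termwise, Lemmas \ref{lem:domains} and \ref{lem:domains2} for the two domain inclusions, the identity \eqref{eq:sumkringel0} to kill the cross term, and the fact that $\mathcal E$ extends $\mathcal E^0$. Your bilinear expansion of $\frac{1}{k}\sum_i\mathcal E(\bar u+\kringel u_i)$ is just the paper's computation $\mathcal E(\bar u,\bar v)+\frac{1}{k}\sum_i\mathcal E(u_i-\bar u,v_i-\bar v)=\frac{1}{k}\sum_i\mathcal E(u_i,v_i)$ read in the other direction, and you rightly single out the reverse domain inclusion (reassembling $u\in\hat{\mathcal F}$ from the pieces via locality) as the step the paper passes over most quickly.
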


\begin{proof}
We will proof that the Dirichlet forms $(\mathcal E^{GL}, D(\mathcal E^{GL}))$ and $(\ehat, D(\ehat))$ coincide.
 Let $u,v\in D(\ehat)$. By Lemma \ref{lem:approxE},
  \[ \mathcal E_t^{GL}(u,v) =  \mathcal E_t(\bar u,\bar v) + \frac{1}{k}\sum_{i=1}^k \mathcal E_t^0(\kringel u_i,\kringel v_i). \]

 By Lemma \ref{lem:domains}, $\bar u,\bar v \in D(\mathcal E)$ and $\kringel u_i,\kringel v_i\in D(\mathcal E^0)$, so that we can take the limit $t\to 0$. This yields
  \begin{align*}
   \mathcal E^{GL}(u,v) =& \lim_{t\to 0} \mathcal E_t^{GL}(u,v) 
    = \lim_{t\to 0} \left( \mathcal E_t(\bar u,\bar v) + \frac{1}{k}\sum_{i=1}^k \mathcal E_t^0(\kringel u_i,\kringel v_i) \right) \\
    =& \mathcal E(\bar u,\bar v) + \frac{1}{k}\sum_{i=1}^k \mathcal E^0(\kringel u_i,\kringel v_i) 
    = \mathcal E(\bar u,\bar v) + \frac{1}{k}\sum_{i=1}^k \mathcal E(\kringel u_i,\kringel v_i) \\
    =& \mathcal E(\bar u,\bar v) + \frac{1}{k}\sum_{i=1}^k \mathcal E(u_i-\bar u,v_i-\bar v) 
    = \frac{1}{k} \sum_{i=1}^k \mathcal E(u_i,v_i) 
    = \ehat(u,v),
  \end{align*}
 where we used that $\mathcal E$ is an extension of $\mathcal E^0$.
 This also shows that $D(\ehat)\subset D(\mathcal E^{GL})$. The other direction works with the same argument but using Lemma \ref{lem:domains2} instead.
\end{proof}

\subsection{Identification of $\tild{\mathcal P}(Y|X)$ and $\mathcal P(\hat X)$} \label{subsec:identification}

We will show how the space of charged measures $\tild{\mathcal P}(Y|X)$ can be identified with the space of probability measures on the glued space, $\mathcal P(\hat X)$. Since we only look at two copies of $Y\subset X$, we index the different copies by $Y^+$ and $Y^-$ instead of the numerical indices in the previous subsection. Still, $Z:=X\setminus Y$ and $\hat X = \left( X^+\sqcup X^- \right)/\sim$. As we are dealing now with measures which are not equal on the different copies of $X$, in this section we do keep track of the identification maps $\iota_i, i\in\{+,-\}$. Every subset used in this section is assumed to be a Borel-measurable set in the space it is taken from.

\begin{Lemma}\label{lem:phipsi}
 The maps $ \Phi : \tild{\mathcal P}(Y|X) \to \mathcal P(\hat X)$ and $\Psi: \mathcal P(\hat X) \to \tild{\mathcal P}(Y|X)$, given by
  \[  \Phi((\sigma^+,\sigma^-))(A):= \sigma^+(\iota_+^{-1}(A\cap Y^+)) + \sigma^-(\iota_-^{-1}(A\cap Y^-)) + \sigma^+(\iota_+^{-1}(A\cap Z)) + \sigma^-(\iota_-^{-1}(A\cap Z))\]
  for  $A\subset \hat X$,
and 
  \[  \Psi(\hat \sigma)^i(B):= \hat\sigma(\iota_i(B)\cap Y^i) +\frac{1}{2}\hat\sigma(\iota_i(B)\cap Z)\]
  for   $B\subset X,\, i\in\{+,-\}$, 
respectively, are inverse to each other
and
 isometries between $(\tild{\mathcal P}_p(Y|X), \tild W_p)$ and $(\mathcal P_p(\hat X), \hat W_p)$ for each $p\in [1,\infty)$, where $\hat W$ denotes the Kantorovich-Wasserstein metric on $\mathcal P(\hat X)$.
\end{Lemma}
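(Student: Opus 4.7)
The plan is to proceed in three stages: verify bijectivity of $\Phi$ and $\Psi$, identify the metric structure on $\hat X$ in terms of $d$ and $d^*$, and then transfer couplings between $\mathrm{Cpl}(\overline\sigma,\overline\tau)$ and $\mathrm{Cpl}(\Phi(\sigma),\Phi(\tau))$ in a cost-preserving way.

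\emph{Stage 1: well-definedness and bijectivity.} I would first compute the total mass of $\Phi(\sigma)$ as $\sigma^+(Y)+\sigma^-(Y)+\sigma^+(Z)+\sigma^-(Z)=1$ to land in $\mathcal P(\hat X)$, and check that $\Psi(\hat\sigma)$ satisfies $\Psi(\hat\sigma)^+|_Z=\Psi(\hat\sigma)^-|_Z=\tfrac12 \hat\sigma|_Z$ with the correct total mass. The identities $\Psi\circ\Phi=\mathrm{id}$ and $\Phi\circ\Psi=\mathrm{id}$ then reduce to unfolding the definitions on the three pieces $Y^+$, $Y^-$ and $Z$, using the constraint $\sigma^+|_Z=\sigma^-|_Z$. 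The useful reformulation arising from this stage is $\Phi(\sigma)=(\iota_+)_\#\sigma^+ + (\iota_-)_\# \sigma^-$, each summand being a Borel measure on $\hat X$ supported on $Y^i \cup Z$.

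\emph{Stage 2: metric compatibility.} Straight from the definitions of $\hat d$ and $d^*$ one has
\[
\hat d\bigl(\iota_i(x),\iota_j(y)\bigr) = \begin{cases} d(x,y), & i=j, \\ d^*(x,y), & i\neq j; \end{cases}
\]
moreover $d(x,y)=d^*(x,y)$ whenever $x$ or $y$ lies in $Z$, so the right-hand side is well-defined at the identified boundary.

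\emph{Stage 3: isometry via transfer of couplings.} Given $q\in\mathrm{Cpl}(\overline\sigma,\overline\tau)$ with canonical decomposition $(q^{ij})$, set $\hat q:=\sum_{i,j\in\{+,-\}}(\iota_i,\iota_j)_\# q^{ij}$. A marginal check using $\sum_j q^{ij}\in\mathrm{Cpl}(\sigma^i,\overline\tau)$ and the reformulation from Stage~1 gives $\hat q\in\mathrm{Cpl}(\Phi(\sigma),\Phi(\tau))$; the metric identity from Stage~2 shows that the $\hat W_p^p$-cost of $\hat q$ equals the $\tilde W_p^p$-cost of $q$, yielding $\hat W_p(\Phi(\sigma),\Phi(\tau))\le\tilde W_p(\sigma,\tau)$. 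For the reverse, given $\hat q\in\mathrm{Cpl}(\Phi(\sigma),\Phi(\tau))$, I would use the Borel densities $\hat u^\pm := \mathbf 1_{Y^\pm}+\tfrac12 \mathbf 1_Z$ on $\hat X$ (and the analogous $\hat v^\pm$) to decompose $\dd\hat q^{ij}:=\hat u^i(x)\hat v^j(y)\,\dd\hat q$, each supported on $(Y^i\cup Z)\times (Y^j \cup Z)$, then pull back via $q^{ij}:=(\iota_i^{-1},\iota_j^{-1})_\# \hat q^{ij}$. The sum $q:=\sum_{i,j}q^{ij}$ is a coupling in $\mathrm{Cpl}(\overline\sigma,\overline\tau)$ whose canonical decomposition is precisely $(q^{ij})$, with matching cost. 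Taking infima in both directions yields equality of the Wasserstein distances.

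\emph{Main obstacle.} The delicate bookkeeping is on the identified set $Z\subset\hat X$. Because $\iota_+$ and $\iota_-$ collapse there, the densities $\hat u^\pm$ must equal $\tfrac12$ on $Z$ (rather than being characteristic functions of the copies), mirroring the constraint $\sigma^+|_Z=\sigma^-|_Z$ and the doubling of $Z$-mass under $\Phi$. The coincidence $d=d^*$ on $Z$ from Stage~2 is precisely what makes the transport cost independent of how the $Z$-mass is split among the four summands $\hat q^{ij}$, ensuring that the pushforward and pull-back constructions are mutually consistent and that the constructed $(q^{ij})$ indeed coincide with the canonical decomposition of the sum $q=\sum q^{ij}$.
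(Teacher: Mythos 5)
Your overall strategy is the natural one (the paper declares this proof ``straightforward'' and omits it, and your Stages 1--2 together with the forward half of Stage~3 are exactly that argument): the identities $\Psi\circ\Phi=\id$ and $\Phi\circ\Psi=\id$, the reformulation $\Phi(\sigma)=(\iota_+)_\#\sigma^++(\iota_-)_\#\sigma^-$, the dichotomy $\hat d(\iota_i(x),\iota_j(y))=d(x,y)$ resp.\ $d^*(x,y)$ according to $i=j$ or $i\neq j$ (consistent on $Z$ because $d=d^*$ there), and the pushforward $\hat q=\sum_{i,j}(\iota_i,\iota_j)_\#q^{ij}$ all check out and yield $\hat W_p(\Phi(\sigma),\Phi(\tau))\le\tilde W_p(\sigma,\tau)$.

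The gap is the final claim of Stage~3, that the pulled-back quadruple is ``precisely the canonical decomposition of the sum $q=\sum_{ij}q^{ij}$''. It is not: your densities $\hat u^i$ are not constant on the fibres $\{\iota_+(x),\iota_-(x)\}$ over $Y$, whereas the canonical density $u^i=\dd\sigma^i/\dd\overline\sigma$ only sees the folded point, so after folding, mass arriving at $(x,y)$ from the two sheets gets re-split in the proportions $u^i(x)v^j(y)$ irrespective of which sheet it came from. Concretely, take $X=\R$, $Y=(-1,1)$, $\sigma=(\tfrac12\delta_0,\tfrac12\delta_0)$, $\tau=(\tfrac12\delta_{1/2},\tfrac12\delta_{-1/2})$. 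The coupling $\hat q=\tfrac12\delta_{(\iota_+(0),\iota_+(1/2))}+\tfrac12\delta_{(\iota_-(0),\iota_-(-1/2))}$ has $\hat d^p$-cost $(\tfrac12)^p$, and your pull-back reproduces this cost with $q^{+-}=q^{-+}=0$; but the \emph{canonical} decomposition of the (unique) coupling $q$ of $\overline\sigma=\delta_0$ and $\overline\tau$ has $u^\pm\equiv\tfrac12$ at $0$, hence charges $q^{+-}$ and $q^{-+}$ with mass $\tfrac14$ each and has cost $\tfrac12(\tfrac12)^p+\tfrac12(\tfrac32)^p$. So the two decompositions differ, and -- more seriously -- if one reads \eqref{eq:Wtilde} literally as ``infimum over $q\in\Cpl(\overline\sigma,\overline\tau)$ with cost computed from the (Radon--Nikodym) canonical decomposition'', the reverse inequality $\tilde W_p\le\hat W_p$ fails in this example. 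The repair is to read \eqref{eq:Wtilde} as an infimum over $q$ \emph{together with} all admissible decompositions $q=\sum_{ij}q^{ij}$ with $q^{ij}\in\Cpl(\sigma^{ij},\tau^{ij})$ for some splittings $\sigma^i=\sigma^{i+}+\sigma^{i-}$, $\tau^j=\tau^{+j}+\tau^{-j}$ (the canonical one being just one competitor). Your pull-back does produce such an admissible quadruple, with $\sum_jq^{ij}(\cdot,X)=\sigma^i$, $\sum_iq^{ij}(X,\cdot)=\tau^j$ and cost equal to that of $\hat q$; with that reading the argument closes, but you must drop (or correct) the sentence identifying it with the canonical decomposition.
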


The \emph{proof} is straightforward and left to the reader.

The isometry allows to deduce a representation of the heat flow of charged measures in terms of the heat flows of their effective and total measures.
  
\begin{Lemma}\label{lem:P_t_measure}
Let $\sigma\in \tilde\Pz(Y|X)$. Then
  \[ \tilde\scrp_t\sigma = \left( \scrp_t\frac{\sigma^++\sigma^-}{2} + \scrp_t^0\frac{\sigma^+-\sigma^-}{2}, \scrp_t\frac{\sigma^++\sigma^-}{2} - \scrp_t^0\frac{\sigma^+-\sigma^-}{2} \right). \]
\end{Lemma}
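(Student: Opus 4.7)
The plan is to prove the identity by transporting the question through the isometry $\Phi\colon\tild{\mathcal P}(Y|X)\to\mathcal P(\hat X)$ of Lemma \ref{lem:phipsi}. By construction, $\tild\scrp_t=\Psi\circ\hat\scrp_t\circ\Phi$, and by Theorem \ref{thm:E_GL=E_hat} the semigroup dual to $\hat\scrp_t$ is precisely $P_t^{GL}$, whose explicit action on functions is given in Definition \ref{def:P_GL_semigroup}. Hence it suffices to denote the right-hand side by
\[ \tau:=\left(\scrp_t\tfrac{\sigma^++\sigma^-}{2}+\scrp_t^0\tfrac{\sigma^+-\sigma^-}{2},\ \scrp_t\tfrac{\sigma^++\sigma^-}{2}-\scrp_t^0\tfrac{\sigma^+-\sigma^-}{2}\right), \]
and to show, by duality against bounded Borel $f\colon\hat X\to\mathbb R$, that $\Phi(\tau)=\hat\scrp_t\Phi(\sigma)$.

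First I would check that $\tau\in\tild{\mathcal P}(Y|X)$. Nonnegativity of $\tau^\pm$ follows from the kernel inequality $p_t^0\le p_t$ (which extends $\scrp_t^0\rho\le\scrp_t\rho$ to nonnegative $\rho$) applied separately to $\sigma^+$ and $\sigma^-$. Conservation of total mass $\tau^+(X)+\tau^-(X)=(\sigma^++\sigma^-)(X)=1$ is stochastic completeness of $\scrp_t$. Finally $\tau^+|_Z=\tau^-|_Z$ because $\scrp_t^0\rho$ assigns zero mass to $Z=X\setminus Y$ by the homogeneous Dirichlet condition; hence $\scrp_t^0(\sigma^+-\sigma^-)$ vanishes on $Z$.

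The core step is a pairing identity. Unpacking the definition of $\Phi$ in Lemma \ref{lem:phipsi}, using that $Z$ sits as a single copy inside $\hat X$ while $\eta^+|_Z=\eta^-|_Z$ for $\eta\in\tild{\mathcal P}(Y|X)$, one obtains for any bounded Borel $f$ on $\hat X$ with $f_\pm:=f\circ\iota_\pm$
\[ \int_{\hat X} f\,d\Phi(\eta)=\int_X f_+\,d\eta^++\int_X f_-\,d\eta^-. \]
Applying this with $\eta=\tau$ and using the definitions of $\tau^\pm$ together with the duality $\int P_t g\,d\rho=\int g\,d\scrp_t\rho$ (and the analogous duality for $P_t^0,\scrp_t^0$), the right-hand side collapses to
\[ \int_X P_t\bar f\,d(\sigma^++\sigma^-)+\int_X P_t^0\kringel f_+\,d(\sigma^+-\sigma^-), \]
where I used $\bar f=(f_++f_-)/2$ and $\kringel f_+=(f_+-f_-)/2=-\kringel f_-$. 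On the other hand, applying the same pairing identity to $\eta=\sigma$ with $P_t^{GL}f$ in place of $f$, and inserting the formula $(P_t^{GL}f)_\pm=P_t\bar f+P_t^0\kringel f_\pm$ from Definition \ref{def:P_GL_semigroup}, produces the same expression after using $\kringel f_-=-\kringel f_+$. Since $\hat\scrp_t\Phi(\sigma)$ is characterised by $\int f\,d\hat\scrp_t\Phi(\sigma)=\int P_t^{GL}f\,d\Phi(\sigma)$ (Theorem \ref{thm:E_GL=E_hat}), this forces $\Phi(\tau)=\hat\scrp_t\Phi(\sigma)$ and therefore $\tau=\tild\scrp_t\sigma$.

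The only real technical nuisance is the bookkeeping around the shared boundary set $Z$: the measure $\Phi(\eta)$ receives contributions on $Z$ from both copies, yet every test function on $\hat X$ restricts to a single value there. This is reconciled exactly by the defining constraint $\eta^+|_Z=\eta^-|_Z$ for charged probabilities and by the Dirichlet vanishing of $P_t^0\kringel f_\pm$ on $Z$, which together make the $Z$-contributions cancel cleanly. Once this accounting is done, the proof is essentially linear algebra on the $\bar f$/$\kringel f_\pm$ decomposition combined with the heat-semigroup dualities.
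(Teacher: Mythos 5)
Your proposal is correct and follows essentially the same route as the paper: pass to the glued space via the isometry of Lemma \ref{lem:phipsi}, use $\hat P_t=P_t^{GL}$ from Theorem \ref{thm:E_GL=E_hat} together with the dualities \eqref{eq:P_dual} for $(P_t,\scrp_t)$ and $(P_t^0,\scrp_t^0)$, and regroup the $\bar f$/$\kringel f_\pm$ terms. The additional check that the right-hand side lies in $\tild\Pz(Y|X)$ and the explicit bookkeeping on $Z$ are fine (the paper leaves them implicit) and introduce no gap.
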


\begin{proof}
We do the calculation in the equivalent setting of the doubled space $\hat X$.
Let $\hat\sigma\in \mathcal P(\hat X)$. Then
\begin{align*}
 \int_{\hat X} u \dd\hatp_t\hat\sigma =& \int_{\hat X} \hat P_tu \dd\hat\sigma \\
  =& \int_{X^+} \Big(P_t\frac{u^++u^-}{2} + P^0_t\frac{u^+-u^-}{2}\Big)\dd\sigma^+ + \int_{X^-} \Big(P_t\frac{u^++u^-}{2} - P^0_t\frac{u^+-u^-}{2}\Big)\dd\sigma^- \\
  =& \int_{X^+}\frac{1}{2}u^+\dd\scrp_t\sigma^+ + \int_{X^+}\frac{1}{2}u^-\dd\scrp_t\sigma^+ + \int_{X^+}\frac{1}{2}u^+\dd\scrp_t^0\sigma^+ - \int_{X^+}\frac{1}{2}u^-\dd\scrp_t^0\sigma^+ \\
  &+ \int_{X^-}\frac{1}{2}u^+\dd\scrp_t\sigma^- + \int_{X^-}\frac{1}{2}u^-\dd\scrp_t\sigma^- - \int_{X^-}\frac{1}{2}u^+\dd\scrp_t^0\sigma^- + \int_{X^-}\frac{1}{2}u^-\dd\scrp_t^0\sigma^- \\
 =& \int_{X^+} u^+ \dd\left(\scrp_t\frac{\sigma^++\sigma^-}{2} + \scrp_t^0\frac{\sigma^+-\sigma^-}{2}\right) + \int_{X^-} u^- \dd\left(\scrp_t\frac{\sigma^++\sigma^-}{2} - \scrp_t^0\frac{\sigma^+-\sigma^-}{2}\right)
\end{align*}
We relied heavily on the fact that we glue together copies of the \emph{same} space, making it possible to ``switch'' indices when necessary. 
\end{proof}

\begin{Lemma} \label{lem:ass=ent_hat_convex}
 Assumption \ref{ass} in $\tild{\mathcal P}_2(Y|X)$ is satisfied if and only if the entropy $\widehat \eent$ is convex in $\mathcal P_2(\hat X)$ (i.e.\! $\hat X$ is an $\rcd{K}{\infty}$ space).
\end{Lemma}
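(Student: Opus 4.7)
The plan is to transfer the $K$-convexity between the two spaces via the isometry $\Phi$ from Lemma \ref{lem:phipsi}, then compare the two entropy functionals explicitly. Since $\Phi\colon (\tilde{\mathcal P}_2(Y|X),\tilde W_2) \to (\mathcal P_2(\hat X),\hat W_2)$ is a bijective isometry, the statement will follow once we verify that $\widetilde{\Ent}_\mm$ and $\widehat{\Ent}$ agree up to an additive constant under this identification, and then invoke the infinitesimal Hilbertianity of $\hat X$ established in Lemma \ref{lem:X_hat_inf_hilbert}.

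First I would compute the density of $\Phi(\sigma)$ with respect to $\hat\mm = \tfrac12(\mm^++\mm^-)$. Since $Y$ is dense in $X$ and $\mm(\partial Y)=0$, the closed set $Z=X\setminus Y$ is $\mm$-negligible, hence $\hat\mm(Z)=0$. For $\sigma=(\sigma^+,\sigma^-)\in\tilde{\mathcal P}_2(Y|X)$ with $\sigma^\pm=u^\pm\mm$, the formula for $\Phi$ gives $\Phi(\sigma)|_{Y^i}=(u^i\circ\iota_i^{-1})\,\mm^i$ for $i\in\{+,-\}$, which, because $\hat\mm|_{Y^i}=\tfrac12\mm^i$, translates into the $\hat\mm$-density
\[ \hat\rho = 2(u^+\circ\iota_+^{-1})\,\mathbf 1_{Y^+} + 2(u^-\circ\iota_-^{-1})\,\mathbf 1_{Y^-}. \]
A direct integration, together with the identity $\sigma^+(Y)+\sigma^-(Y)=\sigma^+(X)+\sigma^-(X)=1$ (which holds because $\sigma^\pm\ll\mm$ and $\mm(Z)=0$), then yields
\[ \widehat{\Ent}(\Phi(\sigma)) \;=\; \Ent_\mm(\sigma^+) + \Ent_\mm(\sigma^-) + \log 2 \;=\; \widetilde{\Ent}_\mm(\sigma) + \log 2. \]
If either $\sigma^+$ or $\sigma^-$ fails to be absolutely continuous, both sides are $+\infty$, so the identity extends to all of $\tilde{\mathcal P}_2(Y|X)$.

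With the entropy identity in hand, the two functionals differ by a constant on isometric spaces, so $K$-convexity of $\widetilde{\Ent}_\mm$ along $\tilde W_2$-geodesics is equivalent to $K$-convexity of $\widehat{\Ent}$ along $\hat W_2$-geodesics. That is precisely the $\CD(K,\infty)$ condition on $\hat X$. Combined with Lemma \ref{lem:X_hat_inf_hilbert}, which tells us that $\hat X$ is infinitesimally Hilbertian, this is the same as $\hat X$ being an $\rcd{K}{\infty}$ space. The equivalence of the two statements follows.

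The only place requiring any care is the bookkeeping with the factor $2$ in the density calculation and making sure the boundary contributions vanish; this is handled by exploiting $\mm(Z)=0$ so that $\sigma^+(Y)+\sigma^-(Y)=1$ is constant across $\tilde{\mathcal P}_2(Y|X)$, ensuring that the $\log 2$ offset is genuinely a constant and therefore irrelevant for $K$-convexity.
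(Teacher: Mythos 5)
Your proposal is correct and follows essentially the same route as the paper: both proofs reduce the statement to the isometry of Lemma \ref{lem:phipsi} together with the identity $\widehat{\Ent}\circ\Phi=\widetilde{\Ent}_\mm+\log 2$ (the paper computes the equivalent identity $\widetilde{\Ent}_\mm\circ\Psi=\widehat{\Ent}+\log\tfrac12$ in the opposite direction), so that $K$-convexity transfers because the functionals differ by a constant on isometric spaces. Your explicit handling of the non-absolutely-continuous case and the appeal to Lemma \ref{lem:X_hat_inf_hilbert} for infinitesimal Hilbertianity are fine additions but not a different method.
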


\begin{proof}
 Let $\hat\sigma\in \mathcal P_2(\hat X)$ with $\hat\sigma = \hat\xi\hat\mm$. We will show that the entropy of $\hat\sigma$ in $\mathcal P_2(\hat X)$ equals that of $\Psi(\hat\sigma)$ in $\tilde{\mathcal P}_2(Y|X)$ up to an additive constant, and then the result follows by Lemma \ref{lem:phipsi} and the fact that $K$-convexity is preserved if you add a constant to the functional.
 We have
  \begin{align*}
   \widehat\eent (\hat\sigma) =& \int_{\hat X} \hat \xi \log \hat\xi\dd\hat\mm \\
    =& \frac{1}{2}\int_{Y^+} \hat\xi|_{Y^+} \log \hat\xi|_{Y^+} \dm + \frac{1}{2}\int_{Y^-} \hat\xi|_{Y^-} \log \hat\xi|_{Y^-} \dm + \int_{Z} \hat\xi|_{Z} \log \hat\xi|_{Z} \dm \\
    =& \frac{1}{2}\int_{X^+} \hat\xi|_{X^+} \log \hat\xi|_{X^+} \dm + \frac{1}{2}\int_{X^-} \hat\xi|_{X^-} \log \hat\xi|_{X^+} \dm
  \end{align*}
 On the other hand, to compute $\widetilde\eent(\Psi(\hat\sigma))$, let us first identify the density of $\Psi(\hat\sigma)^i$ with respect to $\mm$: For a Borel-measurable set $A\subset X$
  \begin{align*}
   \Psi(\hat\sigma)^i (A) =& \hat\sigma (\iota_i(A)\cap Y^i) + \frac{1}{2}\hat\sigma(\iota_i(A)\cap Z) 
    = \int_{\iota_i(A)\cap Y^i} \dd\hat\sigma + \frac{1}{2}\int_{\iota_i(A)\cap Z} \dd\hat\sigma \\
    =& \int_{\iota_i(A)\cap Y^i} \frac{1}{2}\hat\xi \dd\mm + \frac{1}{2} \int_{\iota_i(A)\cap Z} \hat\xi\dm 
    = \frac{1}{2} \int_{\iota_i(A)\cap X^i} \hat\xi|_{X^i} \dm,
  \end{align*}
so that
  $\Psi(\hat\sigma)^i = \frac{1}{2} \left( \hat\xi|_{X^i}\circ \iota_i \right) \mm$.
Thus
  \begin{align*}
   \widetilde\eent(\Psi(\hat\sigma)) =& \eent(\Psi(\hat\sigma)^+) +\eent(\Psi(\hat\sigma)^-) \\
    =& \int_{X} \frac{1}{2} \left( \hat\xi|_{X^+}\circ \iota_+\right) \log \left(\frac{1}{2} \left( \hat\xi|_{X^+}\circ \iota_+\right)\right) \dm +  \int_{X} \frac{1}{2} \left( \hat\xi|_{X^-}\circ \iota_-\right) \log \left(\frac{1}{2} \left( \hat\xi|_{X^-}\circ \iota_-\right)\right) \dm \\
    =& \int_{X} \frac{1}{2} \left( \hat\xi|_{X^+}\circ \iota_+\right) \log \left(\left( \hat\xi|_{X^+}\circ \iota_+\right)\right) \dm + \int_{X} \frac{1}{2} \left( \hat\xi|_{X^+}\circ \iota_+\right) \log \left(\frac{1}{2}\right) \dm \\
    &+ \int_{X} \frac{1}{2} \left( \hat\xi|_{X^-}\circ \iota_-\right) \log \left(\left( \hat\xi|_{X^-}\circ \iota_-\right)\right) \dm + \int_{X} \frac{1}{2} \left( \hat\xi|_{X^-}\circ \iota_-\right) \log \left(\frac{1}{2}\right) \dm \\
      &+ \log \frac{1}{2} \underbrace{\int_X \frac{1}{2} \left( \hat\xi|_{X^+}\circ \iota_+\right) + \frac{1}{2} \left( \hat\xi|_{X^-}\circ \iota_-\right) \dm}_{=1} \\
    =& \widehat\eent (\hat\sigma) + \log\frac{1}{2}.
  \end{align*}
 \end{proof}

\section{Transportation (semi-)distances between subprobabilities}\label{sec:Wnull}

Let $(X,d)$ be a complete separable metric space and $Y\subset X$ be an open subset with $\emptyset\not= Y\not= X$.
Recall the definition of 
$L^p$-transportation semi-metric between subprobabilities  $\mu,\nu\in\Pz^{sub}(Y)$:
 \begin{align*}
 W^0_p(\mu,\nu):=& \inf\Big\{\tild W_p(\sigma,\tau) \,\Big|\, \sigma,\tau\in\tild\Pz(Y|X), \sigma^0=\mu, \tau^0=\nu\Big\}
\\
=&\inf\Big\{\tild W_p\big( (\mu+\rho,\rho), (\nu+\eta,\eta)\big) \,\Big|\, \rho,\eta\in\Pz^{sub}(X), 
(\mu+2\rho)(X)=1, (\nu+2\eta)(X)=1\Big\}. \nonumber
\end{align*}

\begin{proof}[Proof of Lemma \ref{lem:hatW_metric}]
This is an immediate consequence of the isometry between $\tild{\mathcal P}_p(Y|X)$ and $\mathcal P_p(\hat X)$, together with Lemma \ref{prop:X_hat_mms}.
\end{proof}

Every coupling of the charged probability measures $(\mu+\rho,\rho)$ and $(\nu+\eta,\eta)$ induces a decomposition of each of the involved measures into three parts. This leads to another, more detailed description of the transportation costs from above.
\begin{Lemma} \label{lem:remark_decomposition}
Let $\mu,\nu\in \mathcal P_p^{sub}(Y)$. Then
 \begin{eqnarray}
   W^0_p(\mu,\nu)^p
    &=&
  \inf\Big\{ W_p(\mu_1,\nu_1)^p+W_p(\mu_2,\eta_1^+)^p+W_p^*(\mu_3,\eta_1^-)^p \notag\\
  &&\qquad
    +W_p(\rho^+_1,\nu_2)^p+W_p(\rho^+_2,\eta_2^+)^p+W_p^*(\rho^+_3,\eta_2^-)^p \notag\\
  &&\qquad
    +W^*_p(\rho^-_1,\nu_3)^p+W^*_p(\rho^-_2,\eta_3^+)^p+W_p(\rho^-_3,\eta_3^-)^p
    \,\Big|\, \label{eq:W0_decomposition}\\
  &&\qquad\
    \mu=\mu_1+\mu_2+\mu_3, \rho=\rho_1^++\rho_2^++\rho_3^+=\rho_1^-+\rho_2^-+\rho_3^-,(\mu+2\rho)(X)=1, \notag\\
  &&\qquad\
    \nu=\nu_1+\nu_2+\nu_3, \ \eta=\eta_1^++\eta_2^++\eta_3^+=\eta_1^-+\eta_2^-+\eta_3^-, \ (\nu+2\eta)(X)=1     \Big\}. \notag 
\end{eqnarray}
The decompositions implicitly require the coupled measures to have the same mass, so for instance $\mu_1(X)=\nu_1(X)$ etc.
\end{Lemma}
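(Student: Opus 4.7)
The plan is to unfold the infimum defining $W_p^0$ and recognise \eqref{eq:W0_decomposition} as a refined enumeration of the integrand in the definition of $\tilde W_p$. The key observation is that, once auxiliary measures $\rho,\eta$ are fixed, the total measures admit three-part labelled decompositions
\[
\overline\sigma = \mu + \rho^+ + \rho^-, \qquad \overline\tau = \nu + \eta^+ + \eta^-,
\]
in which $\rho^+$ and $\rho^-$ both equal $\rho$ but are tagged according to whether they sit inside $\sigma^+=\mu+\rho$ or inside $\sigma^-=\rho$, and similarly for $\eta^\pm$. Any coupling $q\in\Cpl(\overline\sigma,\overline\tau)$ therefore splits canonically into a $3\times 3$ array $q_{k,l}$ indexed by $k\in\{\mu,\rho^+,\rho^-\}$ and $l\in\{\nu,\eta^+,\eta^-\}$. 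Cell by cell the integrand in \eqref{eq:Wtilde} evaluates to $d(x,y)^p$ whenever the labels $k,l$ share the same sign (five cells) and to $d^*(x,y)^p$ otherwise (four cells), reproducing exactly the nine summands on the right-hand side of \eqref{eq:W0_decomposition}.

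For the inequality $W_p^0(\mu,\nu)^p \le \text{RHS}$, I would start from an $\varepsilon$-optimal admissible collection $(\mu_i,\nu_i,\rho_i^\pm,\eta_i^\pm)$ for the right-hand side and pick, for each of the nine cells, an optimal $W_p$- or $W_p^*$-coupling of the corresponding pair of marginals. Setting $\rho:=\sum_i\rho_i^+=\sum_i\rho_i^-$ and $\eta$ analogously, the sum of the nine sub-couplings is an element of $\Cpl(\overline\sigma,\overline\tau)$. Regrouping these cells according to the sign tags gives back the canonical four-part decomposition $q^{\pm\pm}$ appearing in \eqref{eq:Wtilde}, and the total cost is exactly the sum of the nine terms, so passing to the infimum over admissible data gives the desired bound.

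For the reverse inequality, fix $\rho,\eta$ and $q\in\Cpl(\overline\sigma,\overline\tau)$, and introduce the Radon--Nikodym densities $u_\mu := d\mu/d\overline\sigma$ and $u_{\rho^+}:=u_{\rho^-}:=d\rho/d\overline\sigma$, where the two copies are kept labelled as contributions to $\sigma^+$ and $\sigma^-$ respectively, so that $u_\mu+u_{\rho^+}+u_{\rho^-}=1$ $\overline\sigma$-a.e.; analogously define $v_\nu, v_{\eta^\pm}$. Setting
\[
dq_{k,l}(x,y) := u_k(x)\,v_l(y)\,dq(x,y),
\]
the marginals yield pieces $\mu_i,\nu_i,\rho_i^\pm,\eta_i^\pm$ satisfying all the mass constraints of \eqref{eq:W0_decomposition}. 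Each $q_{k,l}$ is a valid coupling of its own marginals, so its contribution to the $\tilde W_p^p$-integral dominates the corresponding $W_p^p$- or $W_p^{*p}$-term. Taking the infimum over $\rho,\eta,q$ delivers the RHS as an upper bound for $W_p^0(\mu,\nu)^p$.

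The one slightly delicate point in the bookkeeping is the simultaneous compatibility $\sum_i \rho_i^+ = \sum_i \rho_i^- = \rho$, which has to hold for both tag choices at once. In the construction above this is automatic because both sums equal the measure $u_{\rho^\pm}(x)\,d\overline\sigma(x)=\rho$, the tag being a pure label; no further matching argument is required, and likewise for $\eta$. The equal-mass requirement implicit in every $W_p$- or $W_p^*$-term is similarly automatic, since the two marginals of $q_{k,l}$ have the common total mass $\int u_k\,v_l\,dq$.
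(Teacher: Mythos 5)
Your argument is in substance the paper's own: the proof given there is the one-line instruction to pass to the glued space via the isometry of Lemma \ref{lem:phipsi} and to disintegrate the appearing measures, and your $3\times 3$ array of cells $dq_{k,l}=u_k(x)v_l(y)\,dq(x,y)$ is exactly that disintegration, carried out on $X$ rather than on $\hat X$. The cell count (five $d$-cells, four $d^*$-cells) and the matching with the nine terms of \eqref{eq:W0_decomposition} are correct, and the ``$\ge$'' direction is fine. One claim in the ``$\le$'' direction is, however, not literally true and should be repaired: the sum of the nine optimal sub-couplings is indeed a coupling $q\in\Cpl(\overline\sigma,\overline\tau)$, but regrouping the cells by sign does \emph{not} in general reproduce the \emph{canonical} four-part decomposition of that $q$ — the canonical pieces are forced to have the product-density form $u^i(x)v^j(y)\,dq(x,y)$, which your regrouped pieces need not have. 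If \eqref{eq:Wtilde} is read with the canonical decomposition only, the cost of $q$ could a priori exceed the sum of your nine terms, because $d\le d^*$ gives the comparison the wrong way on the same-sign cells. The clean fix, and what the paper intends, is to assemble the nine sub-couplings directly as a measure on $\hat X\times\hat X$ (mass labelled $\mu,\rho^+,\nu,\eta^+$ on the upper sheet, mass labelled $\rho^-,\eta^-$ on the lower sheet); this is an admissible competitor for $\hat W_p(\Phi\sigma,\Phi\tau)$ whose cost is exactly the sum of the nine terms, since $\hat d=d$ on same-sheet pairs and $\hat d=d^*$ across sheets. Equivalently, one notes that by Lemma \ref{lem:phipsi} the infimum in \eqref{eq:Wtilde} effectively runs over \emph{all} decompositions $q=\sum_{i,j}q^{ij}$ compatible with the marginals $\sigma^i,\tau^j$, not only the canonical one; with that reading your regrouped decomposition is admissible and the bound follows.
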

The \emph{proof} consists in using again the isometry between $\tilde{\mathcal P}_p(Y|X)$ and $\hat{\mathcal P}_p(\hat X)$ and disintegrating the appearing measures.
In the case $p=1$, a more explicit description is possible.

\begin{Lemma} \label{lem:rep_W0}
 For all $\mu,\nu\in\mathcal P_p^{sub}(Y)$ and $p=1$
  \begin{align*} 
   {W^0_p}(\mu,\nu)^p = \inf \Big\{ & W_p(\mu_1,\nu_1)^p + W_p^*(\mu_0)^p + W^*_p(\nu_0)^p \,\Big|\, \\
      & \mu=\mu_1+\mu_0, \nu=\nu_1+\nu_0, (\mu+\nu_0)(X)\leq 1, (\nu+\mu_0)(X)\leq 1 \Big\}.
  \end{align*}
  Moreover, the $\le$-inequality holds for all $p\in[1,\infty)$ if $(X,d)$ is a length space.
\end{Lemma}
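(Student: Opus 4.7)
I will prove the equality at $p=1$ and the $\le$-inequality for general $p\in[1,\infty)$ in length spaces separately.

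\textbf{The $\le$-direction.} Given a decomposition $\mu=\mu_1+\mu_0$, $\nu=\nu_1+\nu_0$ satisfying $(\mu+\nu_0)(X)\le 1$ and $(\nu+\mu_0)(X)\le 1$, the plan is to construct an admissible pair $\sigma,\tau\in\tilde{\mathcal P}(Y|X)$ with $\sigma^0=\mu,\tau^0=\nu$ together with an explicit decomposition $q=q^{++}+q^{+-}+q^{-+}+q^{--}$ of a coupling of $\overline\sigma,\overline\tau$. The guiding choice is $\sigma=(\mu+\rho,\rho)$ and $\tau=(\nu+\eta,\eta)$, where $\rho$ is a suitable multiple of $\nu_0$ and $\eta$ of $\mu_0$, together with common paddings on $Z$ bringing the total mass to $1$ (possible by Remark \ref{remark}(d)); the mass hypothesis combined with $\mu_1(X)=\nu_1(X)$ ensures that the two $Z$-paddings have equal mass and can be identity-coupled at a single point of $Z$. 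In the coupling, $\mu_1\subset\sigma^+$ is sent to $\nu_1\subset\tau^+$ by an optimal $W_p$-plan inside $q^{++}$ (contributing $W_p(\mu_1,\nu_1)^p$), the $\rho$-, $\eta$-parts and the paddings are identity-coupled, and the $\mu_0$, $\nu_0$-parts are paired against the corresponding antiparticle masses via the meta-metric $d^*$ inside $q^{+-}$ and $q^{-+}$, contributing the two $W^*_p$-terms. For $p=1$ a direct identity coupling already delivers exactly $W^*_1(\mu_0)$ and $W^*_1(\nu_0)$. For $p>1$ the identity coupling overshoots by a factor $2^{p-1}$ coming from $W^*_p(\mu):=\tfrac12W^*_p(\mu,\mu)$, and this is eliminated by a length-space smearing: I split $\mu_0,\nu_0$ into many small pieces and interpolate the antiparticles along near-geodesics from their supports to $Z$, so that in the limit the $d^*$-cost converges to $W^*_p(\mu_0,\mu_0)^p$ with the correct $2^{-p}$ prefactor.

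\textbf{The $\ge$-direction at $p=1$.} Given an admissible pair $\sigma,\tau$ with an optimal coupling $q$, I pass to the isometric picture $(\tilde{\mathcal P}_1(Y|X),\tilde W_1)\cong(\mathcal P_1(\hat X),\hat W_1)$ provided by Lemma \ref{lem:phipsi} and work with $\hat\sigma,\hat\tau\in\mathcal P(\hat X)$ and an optimal $\hat W_1$-plan $\hat q$. Disintegrating $\hat q$ along the partition $\hat X=Y^+\sqcup Y^-\sqcup Z$ separates mass staying inside one copy from mass crossing over through $Z$. I declare $\mu_1$ to be the portion of $\mu$ whose image under $\hat q$ lies inside $Y^+$ within the $\nu$-support, and set $\mu_0:=\mu-\mu_1$, analogously for $\nu$. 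The mass constraints $(\mu+\nu_0)(X)\le 1,(\nu+\mu_0)(X)\le 1$ follow from $\hat\sigma(\hat X)=\hat\tau(\hat X)=1$ and the observation that each $\nu_0$-particle in $\hat\tau$ is fed by $\hat\sigma$-mass not staying inside $Y^+$. The cost of the internal piece of $\hat q$ then dominates $W_1(\mu_1,\nu_1)$ by the very definition of the Kantorovich distance, while the cost of the crossing pieces, using $\hat d(x,y)=\inf_{z\in Z}d(x,z)+d(z,y)$ for $x\in Y^+,y\in Y^-$, dominates $W_1^*(\mu_0)+W_1^*(\nu_0)$ via the identity $W_1^*(\mu)=W_1'(\mu,0)=\inf\{W_1(\mu,\xi):\xi\in\mathcal P(\partial Y)\}$ from Lemma \ref{lem:Wstern_rand}.

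\textbf{Main obstacle.} The hidden factor $2^{p-1}$ coming from the definition $W_p^*(\mu):=\tfrac12W_p^*(\mu,\mu)$ is the principal difficulty for the $\le$-direction when $p>1$; it is precisely the obstruction that forces the length-space hypothesis there while being absent at $p=1$. The matching $\ge$-direction at $p=1$ relies on linearity of $W_1$ together with Lemma \ref{lem:Wstern_rand}, whose sharp $p=1$-equality has no counterpart for $p>1$; this is why the lemma only claims equality at $p=1$.
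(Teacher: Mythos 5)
Your $\le$-direction is essentially the paper's: at $p=1$ one splits $\mu_0=\tfrac12\mu_0+\tfrac12\mu_0$, sends one half at zero cost to a copy $\tfrac12\mu_0\subset\eta\subset\tau^+$ and the other half by a $d^*$-coupling to the mirror copy in $\tau^-$, paying $W_1^*(\tfrac12\mu_0,\tfrac12\mu_0)=W_1^*(\mu_0)$; the constraint $(\nu+\mu_0)(X)\le 1$ is exactly what makes $\overline\tau=\nu+\mu_0+2\vartheta'$ admissible, and $\mu_1(X)=\nu_1(X)$ equalizes the paddings. For $p>1$ your ``smearing'' is the right instinct but the phrase ``near-geodesics from their supports to $Z$'' points at the wrong object: the correct construction (as in the paper) takes a $W_p^*$-optimal self-coupling of $\mu_0$, interpolates each pair $(x,y)$ along the $d^*$-geodesic joining $x$ on the upper sheet to $y$ on the lower sheet, and places the auxiliary measure $\eta_1$ at the \emph{midpoints}; splitting the plan according to which sheet the midpoint lies on gives $\mu_2+\mu_3=\mu_0$ and the cost $2\cdot(\tfrac12)^p\cdot\tfrac12 W_p^*(\mu_0,\mu_0)^p=W_p^*(\mu_0)^p$. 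Geodesics from $\operatorname{supp}\mu_0$ to $Z$ would instead produce $W_p'(\mu_0,0)^p$, which exceeds $W_p^*(\mu_0)^p$ for $p>1$.

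The $\ge$-direction, however, has a genuine gap. Defining $\mu_1$ as the part of $\mu$ whose \emph{immediate} image under $\hat q$ lies in the $\nu$-part, and $\mu_0:=\mu-\mu_1$, does not work, because mass of $\mu$ can be sent to $\eta$ \emph{on the same sheet} $Y^+$: such a pair pays only ordinary $d$-cost, never crosses $Z$, and yet your definition classifies it as $\mu_0$, for which you then claim a lower bound by $W_1^*(\mu_0)$. Concretely, take $X=\R$, $Y=(0,10)$, $\mu=\tfrac13\delta_5$, $\nu=\tfrac13\delta_6$, $\rho=\eta=\tfrac13\delta_{5.5}$, and the coupling $5^+\mapsto 5.5^+$, $5.5^+\mapsto 6^+$, $5.5^-\mapsto 5.5^-$, of total cost $\tfrac13$. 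Your recipe gives $\mu_1=\nu_1=0$, $\mu_0=\mu$, $\nu_0=\nu$, and the claimed lower bound $W_1^*(\mu_0)+W_1^*(\nu_0)=\tfrac13\cdot 5+\tfrac13\cdot 4=3$, which is false. The point is that the coupling may route $\mu$-mass to $\nu$ through the reservoirs $\rho,\eta$ and their mirror copies (using that $\rho'$ is an exact copy of $\rho$, and $\eta'$ of $\eta$), so that the correct split of $\mu$ into ``transported'' and ``annihilated'' parts is determined only by following maximal \emph{chains} of coupled pairs linked by the conjugation map. The paper's proof of this direction consists precisely of discretizing to rational atomic measures, choosing an optimal matching, and classifying these chains into six cases: chains from $\mu$ to $\nu$ are bounded below by direct $d$-transport (giving $W_1(\mu^1,\nu^1)$), chains returning to $\mu$ (resp.\ $\nu$) must change sheets and are bounded below by $d^*$ between their endpoints (giving $\tfrac12 W_1^*(\mu^0,\mu^0)$, resp.\ $\tfrac12 W_1^*(\nu^0,\nu^0)$), and cyclic chains contribute nothing. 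This combinatorial step is the core of the argument and is entirely missing from your proposal; without it, neither the decomposition $\mu=\mu_1+\mu_0$ nor the cost lower bound is justified.
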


\begin{proof}
 The ``$\leq$''-direction follows from the previous Lemma by choosing the decomposition $\rho_3^+=\eta_2^-=\rho_2^-=\eta_3^+=0$ and $\rho_2^+=\eta_2^+=\rho_3^-=\eta_3^-$, so that 
 	\begin{align*}
 	{W}^0_p(\mu,\nu)^p \leq & \inf \big\{ W_p(\mu_1,\nu_1)^p + W_p(\mu_2,\eta_1^+)^p + W^*_p(\mu_3,\eta_1^-)^p +  W_p(\rho_1^+,\nu_2)^p + W^*_p(\rho_1^-,\nu_3)^p \,\big|\, \\
 	&\qquad\qquad\qquad (\mu+2\nu_2)(X)\le1, \ (\nu+2\mu_2)(X)\le1 \big\}\\
 	\le&   \inf \big\{  W_p(\mu_1,\nu_1)^p + W^*_p(\mu_0)^p + W^*_p(\nu_0)^p \,\big|\, \ 
 	(\mu+\nu_0)(X)\le1, \ (\nu+\mu_0)(X)\le1 \big\}.
 	\end{align*}
For the second inequality, we used in the case $p=1$ simply the fact that $\rho_1^+=\rho_1^-, \eta_1^+=\eta_1^-$ and 
$$\inf_{\eta_1^+,\,  \mu_2+\mu_3=\mu_0} \Big[ W_1(\mu_2,\eta_1^+)+W_1^*(\eta_1^+,\mu_3)\Big]\le \frac12 W_1^*(\mu_0,\mu_0)= W_1^*(\mu_0)$$ by choosing $\eta_1^+=\mu_2=\mu_3=\frac12\mu_0$.

The case $p>1$ requires a more sophisticated argumentation using optimal transport in the glued space $\hat X=(X\setminus Y)\cup Y^+\cup Y^-$. We freely switch between  equivalent representations in 
$(\tild{\mathcal P}_p(Y|X), \tild W_p)$ and in $(\mathcal P_p(\hat X), \hat W_p)$.
Assume for simplicity that $(X,d)$ is geodesic. (For general length spaces, one has to use approximation arguments based on almost geodesics.)
Given a $\tilde W_p$-geodesic $(\sigma_t)_{t\in[0,1]}$ connecting $\sigma_0:=(\mu_0,0)$ and $\sigma_1:=(0,\mu_0)$, we decompose it into two
$\tilde W_p$-geodesics $(\sigma'_t)_{t\in[0,1]}$ and $(\sigma''_t)_{t\in[0,1]}$ such that
$\tilde W_p(\sigma'_0,\sigma'_1)=\tilde W_p(\sigma''_0,\sigma''_1)=\frac12\tilde W_p(\sigma_0,\sigma_1)$
and $\sigma'_{1/2}(Y^-)=\sigma''_{1/2}(Y^+)=0$. (Actually, it suffices that
$\tilde W_p(\sigma'_0,\sigma'_1)\ge\frac12\tilde W_p(\sigma_0,\sigma_1)$
and $\sigma'_{1/2}(Y^-)=0$.)
  Choosing $\mu_2=(\sigma'_0)^+$, $\mu_3=(\sigma'_1)^-$,  and $\eta_1^+=(\sigma'_{1/2})^+$ then yields
\begin{eqnarray*}
	\inf_{\eta_1^+,\,  \mu_2+\mu_3=\mu_0} \Big[ W_p(\mu_2,\eta_1^+)^p+W_p^*(\eta_1^+,\mu_3)^p\Big]&\le& 
  W_p\big((\sigma'_0)^+,(\sigma'_{1/2})^+\big)^p+W_p^*\big((\sigma'_{1/2})^+,(\sigma'_1)^+\big)^p\\
  &=&
 \tilde W_p\big(\sigma'_0,\sigma'_{1/2}\big)^p+\tilde W_p\big(\sigma'_{1/2},\sigma'_1\big)^p
  =2^{1-p}\tilde W_p\big(\sigma'_0,\sigma'_{1}\big)^p\\
  &\le &2^{-p}\tilde W_p\big(\sigma_0,\sigma_{1}\big)^p=2^{-p}W^*_p(\mu_0,\mu_0)^p=W^*_p(\mu_0)^p.
 \end{eqnarray*}

 \bigskip
 
   To prove the ``$\geq$''- inequality, we assume for simplicity that minimizers in the definition of $W_1^0$ exist. This is for instance the case when $X$ is compact. For the general case one has to work with almost-minimizers. \newline 
   Let subprobabilities $\mu$ and $\nu$ be given as well as $\rho$ and $\eta$ with $(\mu+2\rho)(X)=1, (\nu+2\eta)(X)=1$ 
 such that
\begin{eqnarray*} 
  {W}^0_1(\mu,\nu)&=&\tilde W_1\big((\mu+\rho,\rho),  (\nu+\eta,\eta)\big)\\
  &=& \hat W_1\big(\mu+\rho+\rho',\nu+\eta+\eta'\big)
\end{eqnarray*}
where for the last identity we switched to the picture of the glued space $\hat X=(X\setminus Y)\cup Y^+\cup Y^-$
with subprobabilities $\mu,\nu,\rho,\eta$ on the ``upper'' sheet $(X\setminus Y)\cup Y^+$
and their copies $\rho',\eta'$ on the ``lower'' sheet $(X\setminus Y)\cup Y^-$. We further assume for the moment that all masses are rational numbers. 

Given $\varepsilon>0$, choose $n,n_1,n_2\in\N$ and $x_i,y_i,u_i,v_i\in X^+$ for $i=1,\ldots,n$ such that
$$W_1(\mu,\mu_n)\le\varepsilon,\quad
W_1(\nu,\nu_n)\le\varepsilon,\quad W_1(\rho,\rho_n)\le\varepsilon,\quad W_1(\eta,\eta_n)\le\varepsilon$$
for 
$$\mu_n=\frac1n\sum_{i=1}^{n-2n_1}\delta_{x_i},\quad
\nu_n=\frac1n\sum_{i=1}^{n-2n_2}\delta_{y_i},\quad
\rho_n=\frac1n\sum_{i=1}^{n_1}\delta_{u_i},\quad
\eta_n=\frac1n\sum_{i=1}^{n_2}\delta_{v_i}.
$$
Hence also $W_1(\rho',\rho'_n)\le\varepsilon$, $W_1(\eta',\eta'_n)\le\varepsilon$ for
$\rho'_n=\frac1n\sum_{i=1}^{n_1}\delta_{u'_i}$,
$\eta'_n=\frac1n\sum_{i=1}^{n_2}\delta_{v'_i}
$ with $u'_i= \iota_-\circ\iota_+^{-1}(u_i)$ and similarly for $v'_i$.
(To avoid ambiguity, we may assume that the sets $\{x_i\}$ and $ \{y_i\}$ are disjoint form each other.)
In particular we have $\frac{n_1}{n}=\rho(X)$ and so on.

Now fix a $\hat W_1$-optimal coupling $q_n$ of $\mu_n+\rho_n+\rho'_n$ and $\nu_n+\eta_n+\eta'_n$
 on $\hat X$. Without restriction, we can choose this coupling $q_n$ as a matching (i.e.  it does not split mass), that is,
$$q_n=\frac1n\sum_{\xi\in Q_n}\delta_\xi$$
with suitable $Q_n\subset Z\times W$ where $Z=\{x_i\}\cup\{u_i\}\cup\{u'_i\}$ and $W=\{y_i\}\cup\{v_i\}\cup\{v'_i\}$.
Now consider chains of (pairwise disjoint) pairs in $Q_n$ with either initial points or endpoints of subsequent pairs being conjugate to each other. These chains of maximal length will be of the form
\begin{itemize}
\item[Case 1:]
$(z_1,w_1), (z'_2,w_1'), (z_2,w_2), (z'_3, w'_2),\ldots,(z'_{k},w'_{k-1}),(z_{k},w_k)$
\item[Case 2:]
$(z_1,w_1), (z'_1,w_2'), (z_2,w_2), (z'_2, w'_3),\ldots,(z'_{k-1},w'_{k}),(z_{k},w_k)$
\item[Case 3:]
$(z_1,w_1), (z'_2,w_1'), (z_2,w_2),\ldots,(z'_{k},w'_{k-1})$
with $z'_k\not=z'_1$
\item[Case 4:]
$(z_1,w_1), (z'_1,w_2'), (z_2,w_2),\ldots,(z'_{k-1},w'_{k})$
with $w'_k\not=w'_1$
\item[Case 5:]
$(z_1,w_1), (z'_2,w_1'), (z_2,w_2),\ldots,(z'_{1},w'_{k-1})$
\item[Case 6:]
$(z_1,w_1), (z'_1,w_2'), (z_2,w_2),\ldots,(z'_{k-1},w'_{1})$
\end{itemize}
with $z_i,z'_i\in Z$, $w_i,w'_i\in W$ and $z\mapsto z'$ denoting the ``conjugation map'' which switches between upper and lower sheet. In particular, $(z')'=z$.

Now let us have a closer look on the previous six cases of chains of maximal length.
\begin{itemize}
\item[Case 1:] Maximality implies $z_1\in\{x_i\}$ and $w_k\in \{y_i\}$ whereas all the other points inbetween $w_i,w'_i,z_i,z'_i\in \{u_i\}\cup\{u'_i\}\cup\{v_i\}\cup\{v'_i\}$. The transportation cost associated with this chain is at least
$$ \hat d(z_1,w_1) + \hat d(w'_1,z'_2) + \dots + \hat d(z_k,w_k)\ge \hat d(z_1,w_k) = d(z_1, w_k) $$
and thus is bounded from below by the cost of the direct transport between the endpoints.

Denote by $X_1\subset \{x_i\}$ the set of $z_1$ in case 1 and by 
$Y_1\subset \{y_i\}$ the set of $w_k$. Let $$\mu_n^1=\frac1n\sum_{x\in X_1}\delta_x, \quad \nu_n^1=\frac1n\sum_{y\in Y_1}\delta_y.$$ 
Then the transport costs arising from all pairs contained in any chain of case 1 is bounded from below by
$W_1\big(\mu_n^1,\nu_n^1\big)$.

\item[Case 2:] This is just a relabeling of case 1 with indices running in reverse order. No additional costs arise.

\item[Case 3:] Here, maximality implies $z_1\in\{x_i\}$ and also $z_k'\in\{x_i\}$. Thus at least one of the pairs in the chain consists of points from two different sheets. 
Thus with the triangle inequality on $\hat X$, we conclude that the cost of this chain is at least 
$d^*(z_1,z'_k)$.

Denote by $X_0\subset \{x_i\}$ the set of $z_1$   in case 3. Note that this set coincides with the set  of $z_k'$ (just by reverting the chain) -- but for  calculating the cost induced by the coupling $q_n$, only one of the pairs $(z_1,z'_k)$ and $(z'_k,z_1)$ has to be taken into account. 

Let $$\mu_n^0=\frac1n\sum_{x\in X_0}\delta_x.$$ 
Then the transport costs arising from all pairs contained in any chain of case 3 is bounded from below by
$\frac12W^*_1\big(\mu_n^0,\mu_n^0\big)$.

\item[Case 4:] Similarly, here we conclude  $w_1\in\{y_i\}$ as well as $w_k'\in\{y_i\}$ and that the cost of the chain is at least 
$d^*(w_1,w'_k)$. Denote by $Y_0\subset \{x_i\}$ the set of $w_1$   in case 4 and set
 $$\nu_n^0=\frac1n\sum_{y\in Y_0}\delta_y.$$ 
Then the transport costs arising from all pairs contained in any chain of case 4 is bounded from below by
$\frac12W^*_1\big(\nu_n^0,\nu_n^0\big)$.

\item[Case 5:] The cyclic chains in this case will produce superfluous costs which will vanish for optimal choices of measures $\rho_n,\eta_n$.
That is, 0 is the best lower estimate for the 
transport costs arising from all pairs contained in any chain of case 5.
This infimum will be attained by chains of length $k=2$ of the form $(z_1,w_1), (z'_1,w'_1)$ with $z_1=w_1$.

\item[Case 6:] This is a cyclic permutation of case 5. No additional costs arise.

\end{itemize}
Summarizing, we obtain
$$\hat W_1\big(\mu_n+\rho_n+\rho'_n,\nu_n+\eta_n+\eta'_n\big)\ge 
W_1\big(\mu_n^1,\nu_n^1\big)+
\frac12W^*_1\big(\mu_n^0,\mu_n^0\big)+\frac12W^*_1\big(\nu_n^0,\nu_n^0\big).$$

Now for given $\varepsilon$ and $n$, the decomposition $\mu_n=\mu_n^1+\mu_n^0$ induces via the optimal coupling of $\mu_n$ and $\mu$ a decomposition $\mu=\mu^1+\mu^0$ such that
$$W_1(\mu^1,\mu^1_n)\le\varepsilon, \quad W_1(\mu^0,\mu^0_n)\le\varepsilon.$$
Similarly, for $\nu_n=\nu_n^1+\nu_n^0$ and $\nu=\nu^1+\nu^0$.
Thus we finally obtain
\begin{eqnarray}
W_1^0(\mu,\nu)&=&
\hat W_1\big(\mu+\rho+\rho',\nu+\eta+\eta'\big) \notag\\
&\ge& 
\hat W_1\big(\mu_n+\rho_n+\rho'_n,\nu_n+\eta_n+\eta'_n\big)-6\varepsilon \notag\\
&\ge&W_1\big(\mu_n^1,\nu_n^1\big)+
\frac12W^*_1\big(\mu_n^0,\mu_n^0\big)+\frac12W^*_1\big(\nu_n^0,\nu_n^0\big)-6\varepsilon \notag\\
&\ge&W_1\big(\mu^1,\nu^1\big)+
\frac12W^*_1\big(\mu^0,\mu^0\big)+\frac12W^*_1\big(\nu^0,\nu^0\big)-10\varepsilon. \label{eq:W_^0_rep_rational}
\end{eqnarray} 
Since $\varepsilon>0$ was arbitrary, this proves the claim.

\medskip

 For the general case of real masses, one can approximate Borel measures by sums of Dirac measures (with rational masses) in the weak topology. By continuity of $\tilde W_1, W_1$ and $W_1^*$ with respect to weak convergence, one can apply the rational case and go to the limit in \eqref{eq:W_^0_rep_rational}.
\end{proof}

      \medskip

 \begin{proof}[Proof of Lemma \ref{lem:rep_W0_1}] Assertions (i) and (ii) are the content of the previous Lemma. The proof for the decomposition in assertion (iv) is straightforward. For the vanishing of the $W_p^\dagger$-term in the case $p=1$ note that in this case $[d'(x,\partial)+d'(x,\partial)]^p=d'(x,\partial)^p+d'(x,\partial)^p$ whereas in general only the $\ge$ inequality holds.
 
 Assertion (iii) will follow from combining assertion (iv), Lemma  \ref{lem:Wstern_rand} and Theorem \ref{thm:W_sharp-W_prime}(i). 
 \end{proof}

In the case of a length space  $X$, the annihilation cost $W_1^*(\mu)$ allows for an alternative characterization as $\inf\{ W_1(\mu,\xi): \xi\in{\mathcal P}(\partial Y)\}$ and, more generally, 
$$W_1^*(\mu,\nu)= \inf\{ W_1(\mu,\xi)+ W_1(\xi,\nu) \,\big|\, \xi\in{\mathcal P}(\partial Y)\}.$$
This is the content of Lemma \ref{lem:Wstern_rand}.

\begin{proof}[Proof of Lemma \ref{lem:Wstern_rand}]
We switch to the picture of two glued copies. Given $\mu,\nu\in \mathcal P(Y)$, consider them as $\mu\in \mathcal P(Y^+)$ and $\nu\in \mathcal P(Y^-)$ and fix a $\hat W_1$-optimal coupling $q$ of them.

To simplify the presentation, let us first discuss the argument if $\hat X$ is a  geodesic space.
Choose a measurable selection of connecting  $\hat d$-geodesics
$\Gamma: \hat X\times \hat X\to \mathrm{Geo}(\hat X)$.
For a geodesic $\gamma$ in $\hat X$ with $\gamma_0\in Y^+, \gamma_1\in Y^-$ define
$\alpha(\gamma)=\inf\{s: \gamma_s\not\in Y^+\}$
and $z(\gamma):=\gamma_{\alpha(\gamma)}$.
Finally, define a map $Y^+\times Y^-\to \partial Y$ by  $\mathcal Z =z\circ \Gamma$.  

Define a probability measure $\xi=\mathcal Z_\#q$ via push forward of the optimal coupling. Then this is a $\hat W_1$-intermediate point of $\mu$ and $\nu$.
Indeed, for the transport from $\mu$ to $\nu$, the pair $x\in Y^+, y\in Y^-$ contributes the cost $d^*(x,y)$.
The fraction $\alpha(x,y) \cdot d^*(x,y)$ contributes to the cost of
 the transport from $\mu$ to $\xi$. And the fraction   $(1-\alpha(x,y)) \cdot d^*(x,y)$ contributes to the cost of
 the transport from $\xi$ to $\nu$.
 
 Now let us discuss the general case of a length space $X$. Instead of geodesics, we now choose approximate $\hat d$-geodesics. With the same construction then $\xi$ will be an approximate $\hat W_1$-intermediate  point. This proves the claim in the case $p=1$.
 
 \medskip
 
 To prove the claim for $p>1$, for simplicity we assume that $X$ is compact. (This will guarantee the existence of the map $\Phi$ to be introduced below. Otherwise, one has to use approximation arguments.)

 For each $\xi\in{\mathcal P}(\partial Y)$ and each $W_p$-optimal coupling $q$ of  $\mu$ and $\xi$ 
  \[ W_p(\mu,\xi)^p = \int_{X\times X} d(x,y)^p \dd q(x,y) \geq \int_{X\times X} d'(x,\partial)^p \dd q(x,y) = W'_p(\mu,0)^p. \]

 To deduce the converse inequality, choose a measurable $\Phi: Y\to\partial Y$ such that for each $x\in Y$ the point $\Phi(x)$ is a minimizer of $z\mapsto d(x,z)$ on $\partial Y$. Define a probability measure $\xi=\Phi_\sharp \mu$. Then
  \[ W_p(\mu,\xi)^p \leq \int_X d(x,\Phi(x))^p \dd \mu(x) = \int_X d'(x,\partial)^p \dd \mu(x) = W'_p(\mu,0)^p. \]
 This proves that 
  \[ W_p'(\mu,0) = \inf\{ W_p(\mu,\xi) \,\big|\, \xi\in{\mathcal P}(\partial Y)\}. \]
 
 \medskip
 
 Moreover, the triangle inequality for $d^*$ implies
 that $W_p(\mu,\xi)+ W_p(\xi,\mu) \geq W^*_p(\mu,\mu)$ for all $\xi\in{\mathcal P}(\partial Y)$. Thus $W'_p(\mu,0)\ge W^*_p(\mu)$. An estimate in the other direction is obtained as follows
  \begin{align*}
   W_p^*(\mu)^p = 2^{-p} W_p^*(\mu,\mu)^p =&\, 2^{-p} \int_{X\times X} \left( \inf_{z\in X\setminus Y} \big( d(x,z)+ d(z,y) \big) \right)^p \dd q(x,y) \\
    \geq &\, 2^{-p} \int_{X\times X} \left( \inf_{z\in X\setminus Y} d(x,z) + \inf_{w\in X\setminus Y} d(w,y) \right)^p \dd q(x,y) \\ 
    \geq &\, 2^{1-p} \int_{X\times X} \left( \inf_{z\in X\setminus Y} d(x,z)\right)^p \dd q(x,y) = 2^{1-p} W_p'(\mu,0)^p,
  \end{align*}
 where $q$ denotes any $W^*_p$-optimal coupling of $\mu$ and $\mu$.
  \end{proof}

\begin{proof}[Proof of Theorem \ref{thm:W_sharp-W_prime}] 
(i) 
For simplicity of the presentation we assume that length minimizing geodesics exist. This is for instance the case when $Y'$ is geodesic. In this case there exist $W_1'$-geodesics which are supported on $d'$-geodesics. For the general case one has to work with almost-geodesics.

Recall that then $W_1'$ is a geodesic metric on  $\mathcal P^{sub}_1(Y)$
and that, according to Lemma \ref{lem:rep_W0_1}iv) and Lemma \ref{lem:Wstern_rand}, 
\begin{align*} 
    W_1'(\mu,\nu) = \inf \Big\{ & W_1(\mu_1,\nu_1) + W_1^*(\mu_0) + W^*_1(\nu_0) \,\Big|\,     \mu=\mu_1+\mu_0, \nu=\nu_1+\nu_0 \Big\}.
  \end{align*}
 for all subprobability measures $\mu,\nu\in \mathcal P^{sub}_1(Y)$.
 Together with Lemma \ref{lem:rep_W0_1}i) this implies $ W_1'(\mu,\nu)\le  W_1^0(\mu,\nu)$. In particular, $W_1^0$ does not vanish outside the diagonal. As $W_1^\flat$ is the biggest metric below $W_1^0$, we have $W_1' \leq W_1^\flat$. Using the fact that $W_1'$ is a \emph{geodesic}  metric, we thus get
  \begin{align*}
   W_1^\sharp (\mu,\nu) =& \inf_{\substack{\eta\colon \mu \leadsto \nu \\ W_1^\flat\text{-cont.}}} \sup_{0=s_0<\ldots<s_n=1} \sum_{i=1}^n W_1^\flat(\eta_{s_{i-1}}, \eta_{s_i}) \\
    \geq& \inf_{\substack{\eta\colon \mu \leadsto \nu \\ W_1^\flat\text{-cont.}}} \sup_{0=s_0<\ldots<s_n=1} \sum_{i=1}^n W_1'(\eta_{s_{i-1}}, \eta_{s_i}) \\
    \geq& \inf_{\substack{\eta\colon \mu \leadsto \nu \\ W_1'\text{-cont.}}} \sup_{0=s_0<\ldots<s_n=1} \sum_{i=1}^n W_1'(\eta_{s_{i-1}}, \eta_{s_i}) \
    = \ W_1'(\mu,\nu).
  \end{align*}

 To prove the converse inequality, given $\mu,\nu\in\mathcal P^{sub}(Y)$, let $(\eta_s')_{s\in[0,1]}$ be a $W'_1$-geodesic connecting $\mu',\nu'$ in $\mathcal P(Y')$ which is supported on (constant-speed) $d'$-geodesics. Decompose this geodesic into two geodesics
 $\eta'_s=\eta'_{s,1}+\eta'_{s,0}$ where $(\eta_{s,1}')_{s\in[0,1]}$ is a $W'_1$-geodesic supported by $d'$-geodesics staying in $Y$ and $(\eta_{s,0}')_{s\in[0,1]}$ is a $W'_1$-geodesic supported by $d'$-geodesics passing through $\partial$.
 
 Now replace the latter by another curve  $(\tilde\eta_{s,0}')_{s\in[0,1]}$ with the same endpoints:
 $$\tilde\eta'_{s,0} :=\left\{
 \begin{array}{ll}
 (1-2s)\eta'_{0,0}+2s\eta'_{0,0}(Y')
\, \delta_\partial,& s\in[0,\frac12]\\
 (2s-1)\eta'_{1,0}+2(1-s)\eta'_{0,0}(Y')
\, \delta_\partial,\quad& s\in(\frac12,1].
 \end{array}\right.$$
 (Indeed, this is also a $W'_1$-geodesic since in the $L^1$-Wasserstein geometry also convex combinations are geodesics.)
 Consider 
  $\tilde\eta_{s}=\tilde\eta'_{s,0}\big|_Y+\eta_{s,1}$.
 This is a curve in $\mathcal P^{sub}(Y)$ which connects $\mu$ and $\nu$. Moreover, taking decompositions 
 	\[ \tilde\eta_s = \underbrace{(\eta_{s,1}+\tilde\eta'_{t,0}|_Y)}_{``\mu_1"} + \underbrace{2(t-s)\eta_{0,0}}_{``\mu_0"} \;\;\text{ and }\;\; \tilde\eta_t= \underbrace{(\eta_{t,1} + \tilde\eta'_{t,0}|_Y)}_{``\nu_1"} + \underbrace{0}_{``\nu_0"}\] 
 in Lemma \ref{lem:rep_W0_1} i) for $s\leq t\leq \frac{1}{2}$ and similar for the other cases, we get
 $$W^0_1(\tilde\eta_s,\tilde\eta_t)\le 
 \left\{
 \begin{array}{ll}
 |t-s|\cdot W_1(\eta_{0,1},\eta_{1,1})
 +2|t-s|\cdot W_1^*(\eta_{0,0}) , \quad& \mbox{for }s,t\le\frac12\\
  |t-s|\cdot W_1(\eta_{0,1},\eta_{1,1})
 +2|t-s|\cdot W_1^*(\eta_{1,0}) , \quad& \mbox{for }s,t\ge\frac12
 \end{array}\right.$$
 and thus
 $$L^\flat_1(\tilde\eta)\le W_1(\eta_{0,1},\eta_{1,1})+W_1^*(\eta_{0,0})+W_1^*(\eta_{1,0}) =W_1'(\mu,\nu)$$
 which finally implies $W_1^\sharp(\mu,\nu)\le W_1'(\mu,\nu) $.

   Since $W_1^\sharp$ is the length metric induced by $W_1^\flat$, one gets $W_1^\flat \leq W_1^\sharp$. 
 The other inequality is provided by the fact that $W_1^\flat$ is the biggest metric below $W_1^0$ and that $W_1^\sharp = W_1' \leq W_1^0$ by the above.
 
 \medskip

(ii) Now let us consider the case $p>1$. The idea is that locally (along a geodesic) the contribution of $W_p^\dagger$ is negligible, so that we can compare $W_p'$ and $W_p^\flat$ on a small scale and then carry it over to the induced length metrics. \newline 
Let subprobabilities $\mu,\nu$ be given as well as a $W'_p$-geodesic $(\eta'_t)_{t\in[0,1]}$ connecting the measures $\mu':=\mu+(1-\mu(Y))\delta_\partial$ and $\nu':= \nu + (1-\nu(Y))\delta_\partial$. By the continuity of $W_p'$ and $W_p^*$ with respect to weak convergence we can assume without loss of generality that $\mu$ and $\nu$ have compact supports and for $\eps>0$ small
	\[ \eta_t(Y) \leq 1-\eps \]
for all $t\in[0,1]$. Recall that the measures without primes are the restrictions to $Y$. We thus have $\eta_t(\partial) =0$, whereas $\eta_t'(\partial)\geq \eps$. Choose $\delta>0$ such that $\eta_t(B_\delta'(\partial))\leq \frac{\eps}{2}$. Let $\Pi$ be the probability measure on the space of $Y'$-geodesics such that $\eta_t'=(\ee_t)_\#\Pi$ (where $\ee_t$ is the evaluation map at time $t$), denote by $L$ the essential supremum of $d'(\gamma_0,\gamma_1)$ under $\Pi$, and let $\delta':=\frac{\delta}{L}$. \newline
We consider $\eta_s$ and $\eta_t$ for $|s-t|\leq \delta'$.
Using that $d^\dagger(x,y)^p \geq d'(x,\partial)^p+d'(y,\partial)^p$, we see that in the decomposition \eqref{W'-decompo} it is actually cheaper to annihilate mass at the boundary:
	\begin{align*}
	W_p'(\eta_s,\eta_t)^p = &\inf \Big\{ W_p(\eta_{s,1},\eta_{t,1})^p + W_p^\dagger(\eta_{s,2},\eta_{t,2})^p + W_p'(\eta_{s,0},0)^p + W_p'(\eta_{t,0},0)^p \,\Big|\, \\
	&\eta_s=\eta_{s,1} + \eta_{s,2} + \eta_{s,0}, \eta_{t} = \eta_{t,1} + \eta_{t,2} + \eta_{t,0}, (\eta_{s}+\eta_{t,0})(Y)\leq 1, (\eta_{t}+\eta_{s,0})(Y)\leq 1 \Big\} \\
	\geq &  \inf \Big\{  W_p(\eta_{s,1},\eta_{t,1})^p +  W'_p(\eta_{s,0}+ \eta_{s,2},0)^p + W'_p(\eta_{t,0} + \eta_{t,2},0)^p \,\Big|\\  
	 & \eta_s=\eta_{s,1}+\eta_{s,2} + \eta_{s,0}, \eta_t=\eta_{t,1}+\eta_{t,2} + \eta_{t,0}, (\eta_s+\eta_{t,0})(Y)\leq 1, (\eta_t + \eta_{s,0})(Y)\leq 1\Big\}.
	\end{align*}
Since $W^\dagger$ only occurs where $d^\dagger$ is smaller than $d$, its contribution comes from geodesics in $B_\delta'(\partial)$, so that by our choice of $\delta$ we know that $\eta_{s,2}(Y)= \eta_{s,2}(B_\delta'(\partial)) \leq \frac{\eps}{2}$ and the same for $\eta_{t,2}$. Hence for $\eps$ small enough we have $(\eta_s+(\eta_{t,2}+\eta_{t,0}))(Y)\leq 1$, so that $\eta_s=\eta_{s,1} + \tilde\eta_{s,0}$ with $\tilde\eta_{s,0}:=\eta_{s,0}+\eta_{s,2}$ is an admissible decomposition. In particular, the above inequality is an \emph{equality}.
Note that we cannot use this trick for $s=0,t=1$ because then the constraint would not be satisfied.
Thanks to Lemma \ref{lem:Wstern_rand} we thus have
	\begin{align*}
	W_p'(\eta_s,\eta_t)^p \geq & \inf \Big\{  W_p(\eta_{s,1},\eta_{t,1})^p +  W^*_p(\tilde\eta_{s,0})^p + W^*_p(\tilde\eta_{t,0})^p \,\Big|\\  
	&  \qquad\eta_s=\eta_{s,1}+\tilde\eta_{s,0}, \eta_t=\eta_{t,1}+\tilde\eta_{t,0}, (\eta_s+\tilde\eta_{t,0})(Y)\leq 1, (\eta_t+\tilde\eta_{s,0})(Y) \leq 1 \Big\}\\
	\geq& W^0_p(\eta_s,\eta_t)^p  \geq  W^\flat_p(\eta_s,\eta_t)^p.
	\end{align*}
Hence, the $W'_p$-length of the curve $(\eta_t)_{t\in[0,1]}$ dominates its $W^\flat_p$-length. This finally proves
$$W'_p(\eta_s,\eta_t)^p\ge W^\sharp_p(\eta_s,\eta_t)^p$$
for all $s,t$. For $s=0, t=1$, this yields the claimed upper estimate for $W_p^\sharp$.

\medskip

The lower estimate follows from assertion i) together with the facts that $W_1^\flat\le W_p^\flat$ (which is inherited from analogous inequalities for $\tilde W_p$ and in turn for $W^0_p$) and $W_p^\flat\le W_p^\sharp$.
\end{proof}

\begin{proof}[Proof of Proposition \ref{cor:W_sharp_vague_conv}] Boundedness of $X$, say $d(x,y)\le D$, implies that all the  $W^\sharp_p$-metrics are continuous w.r.t.\ to each other: $W_1^\sharp\le W^\sharp_p\le D^{1-1/p}\cdot (W_1^\sharp)^{1/p}$.
Thus it suffices to prove the claim for $p=1$.

Assume that $W^\sharp_1(\mu_n,\mu)\to 0$. Then  $W'_1(\mu'_n,\mu')\to 0$ and thus 
$\mu'_n$ converges to $\mu'$ weakly on $Y'$. This in turn obviously implies that  $\mu_n$ converges  to $\mu$ vaguely on $Y$.

Now conversely assume that $W^\sharp_1(\mu_n,\mu)\not\to 0$. By compactness of $Y'$, there will exist $\nu'\in \mathcal P(Y')$ such that -- after passing to a suitable subsequence -- 
$\mu'_n$ converges to $\nu'$ weakly on $Y'$. Let $\nu$ denote the restriction of $\nu'$ to $Y$. Obviously, $\nu\not=\mu$. (Otherwise, $W^\sharp_1(\mu_n,\mu)\to 0$.) Thus there exists $f\in {\mathcal C}_c(Y)$ with $\int fd\mu\not=\int fd\nu=\lim_{n}\int fd\mu_n$ and therefore
$\mu_n$ does not converge to $\mu$ vaguely on $Y$.
\end{proof}

The following simple estimate will make it possible to prove the continuity of $W_p^0$ with respect to weak convergence plus convergence of moments of subprobability measures.
  
\begin{Lemma} \label{lem:W0_estimate}
 Let $\mu, \nu \in \mathcal P^{sub}_1(Y)$ with $\mu(Y) \geq \nu(Y)$. Then, for any $z\in X\setminus Y$,
  \[ W_1^0(\mu,\nu) \leq \inf \left\{ W_1(\mu_1,\nu) + \int_{X} d(x,z) \dd \mu_0(x) \,\big|\, \mu=\mu_1+\mu_0, \mu_1(Y)=\nu(Y) \right\}. \]
\end{Lemma}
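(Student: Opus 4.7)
The plan is to apply the representation of $W^0_1$ from Lemma~\ref{lem:rep_W0_1}(i) with a particular, very simple choice of decomposition on the $\nu$-side, and then bound the resulting annihilation cost $W_1^*(\mu_0)$ in terms of the single reference point $z\in X\setminus Y$ via the triangle inequality for $d^*$.

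More precisely, given any decomposition $\mu=\mu_1+\mu_0$ with $\mu_1(Y)=\nu(Y)$, I would set $\nu_1:=\nu$ and $\nu_0:=0$ in Lemma~\ref{lem:rep_W0_1}(i). The constraint $(\mu+\nu_0)(X)=\mu(Y)\le 1$ is trivially satisfied, and $(\nu+\mu_0)(X)=\nu(Y)+(\mu(Y)-\nu(Y))=\mu(Y)\le 1$ since $\mu_1(Y)=\nu(Y)$. Thus this choice is admissible and yields
\[
W_1^0(\mu,\nu)\le W_1(\mu_1,\nu)+W_1^*(\mu_0)+W_1^*(0)=W_1(\mu_1,\nu)+W_1^*(\mu_0).
\]

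It remains to estimate $W_1^*(\mu_0)=\tfrac12 W_1^*(\mu_0,\mu_0)$ by $\int_X d(x,z)\dd\mu_0(x)$. For this I would use the diagonal coupling $q:=(\id,\id)_\#\mu_0\in\Cpl(\mu_0,\mu_0)$. Since $z\in X\setminus Y$, the definition of $d^*$ gives
\[
d^*(x,x)=\inf_{w\in X\setminus Y}\bigl[d(x,w)+d(w,x)\bigr]\le 2\,d(x,z)
\]
for every $x\in X$, so that
\[
W_1^*(\mu_0)=\tfrac12\inf_{q'\in\Cpl(\mu_0,\mu_0)}\!\!\int d^*\,\dd q'\le \tfrac12\int d^*(x,x)\,\dd\mu_0(x)\le \int_X d(x,z)\,\dd\mu_0(x).
\]
Combining the two displays and taking the infimum over all admissible decompositions $\mu=\mu_1+\mu_0$ with $\mu_1(Y)=\nu(Y)$ proves the claim.

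The only step that requires any care is verifying the mass constraints in Lemma~\ref{lem:rep_W0_1}(i), which is why the hypothesis $\mu(Y)\ge\nu(Y)$ is used: it guarantees both that the decomposition $\mu=\mu_1+\mu_0$ with $\mu_1(Y)=\nu(Y)$ exists (with $\mu_0$ a genuine subprobability) and that $(\nu+\mu_0)(X)\le 1$. Apart from this bookkeeping, the argument is routine.
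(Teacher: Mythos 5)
Your proposal is correct and follows essentially the same route as the paper: both apply the $\le$-direction of Lemma~\ref{lem:rep_W0_1}(i) with $\nu_1=\nu$, $\nu_0=0$, and then bound $W_1^*(\mu_0)=\tfrac12 W_1^*(\mu_0,\mu_0)$ by $\int_X d(x,z)\dd\mu_0(x)$ using $d^*(x,y)\le d(x,z)+d(z,y)$ (the paper estimates an arbitrary coupling, you use the diagonal one -- an immaterial difference). Your explicit verification of the mass constraints is a welcome addition that the paper leaves implicit.
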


\begin{proof}
 Taking a decomposition such that $\nu_1=\nu, \nu_0=0$, Lemma \ref{lem:rep_W0_1} yields
 $W_1^0(\mu,\nu) \leq W_1(\mu_1,\nu) + W^*_1(\mu_0)$. 
 Using now
 \begin{align*}
   W_1^*(\mu_0,\mu_0) =& \inf_{q} \int_{X\times X} d^*(x,y) \dd q(x,y) \\
      \leq& \inf_{q} \int_{X\times X} \big[d(x,z) + d(z,y)\big] \dd q(x,y) 
    = 2 \int_{X} d(x,z) \dd\mu_0(x),
  \end{align*}
 the proof is complete. 
\end{proof}

\begin{lemma} \label{lem:W_sharp_weak_conv}
  Assume that $X$ is compact. Then for  $\mu^{(n)},\mu^*\in \mathcal P^{sub}(Y)$ the following are equivalent:
\begin{itemize}
\item[(i)] $\mu^{(n)}\to \mu^*$ weakly on $Y$
   \item[(ii)]  $W^0_p(\mu^{(n)},\mu^*)\to 0$ and $\mu^{(n)}(Y)\to \mu^*(Y)$
    \end{itemize}
\end{lemma}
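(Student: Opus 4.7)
For the direction (i) $\Rightarrow$ (ii), mass convergence is immediate from weak convergence applied to the constant function $1$. To prove $W_p^0(\mu^{(n)},\mu^*)\to 0$, I first reduce to $p=1$: since $X$ is compact with some diameter $D$, the pointwise bound $d^p\le D^{p-1}d$ (and analogously for $d^*$) plugged into the definition of $\tilde W_p$ yields $W_p^0 \le D^{1-1/p}(W_1^0)^{1/p}$. Writing $m_n := \mu^{(n)}(Y)$ and $m := \mu^*(Y)$, I argue along subsequences where either $m_n\ge m$ or $m_n<m$. On a subsequence with $m_n\ge m$ and $m>0$, the renormalizations $\bar\mu^{(n)}:=\mu^{(n)}/m_n$ and $\bar\mu^*:=\mu^*/m$ are probabilities on the compact space $X$ converging weakly and hence in $W_1$. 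Setting $\mu_1^{(n)}:=(m/m_n)\mu^{(n)}$ and $\mu_0^{(n)}:=\mu^{(n)}-\mu_1^{(n)}$ one has $\mu_1^{(n)}(Y)=m$, $W_1(\mu_1^{(n)},\mu^*)=m\,W_1(\bar\mu^{(n)},\bar\mu^*)\to 0$, and $\mu_0^{(n)}(Y)=m_n-m\to 0$. Applying Lemma~\ref{lem:W0_estimate} with any $z\in X\setminus Y$ then gives $W_1^0(\mu^{(n)},\mu^*)\le W_1(\mu_1^{(n)},\mu^*)+D(m_n-m)\to 0$. The case $m=0$ is covered by the trivial decomposition $\mu_1^{(n)}=0$; subsequences with $m_n<m$ are treated symmetrically via $W_1^0(\mu,\nu)=W_1^0(\nu,\mu)$.

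For (ii) $\Rightarrow$ (i), I first record the Jensen-type inequality $W_1^0\le W_p^0$: any $q\in\Cpl(\bar\sigma,\bar\tau)$ is a probability on $X\times X$, its $\tilde W_p$-integrand has the form $F(x,y)=\sum_{i,j\in\{+,-\}}u^i(x)v^j(y)\,c_{ij}(x,y)^p$ with nonnegative weights $u^iv^j$ summing pointwise to $1$ and costs $c_{ij}\in\{d,d^*\}$, and convexity of $t\mapsto t^p$ gives $F\ge(\sum u^iv^j c_{ij})^p$; a second application of Jensen to $q$ yields $\tilde W_p\ge \tilde W_1$, hence $W_1^0\le W_p^0$ and so $W_1^0(\mu^{(n)},\mu^*)\to 0$. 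I next deduce vague convergence on $Y$. Fix $f\in C_c(Y)$ and pick $\delta>0$ with $\operatorname{supp}(f)\subset\{d'(\cdot,\partial)\ge\delta\}$. By Lemma~\ref{lem:rep_W0_1}(i), there are almost-optimal decompositions $\mu^{(n)}=\mu_1^{(n)}+\mu_0^{(n)}$, $\mu^*=\mu_1^{*,(n)}+\mu_0^{*,(n)}$ with $\mu_1^{(n)}(X)=\mu_1^{*,(n)}(X)$ and combined cost $W_1(\mu_1^{(n)},\mu_1^{*,(n)})+W_1^*(\mu_0^{(n)})+W_1^*(\mu_0^{*,(n)})\to 0$. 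Extending Lemma~\ref{lem:Wstern_rand} to subprobabilities (upper bound via the diagonal coupling, lower bound via $d^*(x,y)\ge d'(x,\partial)+d'(y,\partial)$) gives $W_1^*(\nu)=\int_Y d'(\cdot,\partial)\,d\nu$ for any $\nu\in \mathcal P^{sub}(Y)$, whence Markov's inequality yields $\mu_0^{(n)}(\operatorname{supp}(f))\le W_1^*(\mu_0^{(n)})/\delta\to 0$ and consequently $\int f\,d\mu_0^{(n)}\to 0$ (analogously for $\mu_0^{*,(n)}$). On the equal-mass pieces, $W_1(\mu_1^{(n)},\mu_1^{*,(n)})\to 0$ on compact $X$ together with uniform Lipschitz approximation of the extension of $f$ to $X$ (by $0$ outside $Y$) forces $\int f\,d\mu_1^{(n)}-\int f\,d\mu_1^{*,(n)}\to 0$. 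Summing the three contributions gives $\int f\,d\mu^{(n)}\to\int f\,d\mu^*$.

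To upgrade vague to weak convergence on $Y$, given $f\in C_b(Y)$ and $\epsilon>0$ I choose $\phi_k\in C_c(Y)$ with $0\le\phi_k\le 1$ and $\int\phi_k\,d\mu^*>\mu^*(Y)-\epsilon$. Then $\int f\phi_k\,d\mu^{(n)}\to\int f\phi_k\,d\mu^*$ by vague convergence, while the remainder $\int f(1-\phi_k)\,d\mu^{(n)}$ is bounded by $\|f\|_\infty(\mu^{(n)}(Y)-\int\phi_k\,d\mu^{(n)})$, which by vague convergence of $\int\phi_k\,d\mu^{(n)}$ combined with the mass hypothesis tends to $\|f\|_\infty(\mu^*(Y)-\int\phi_k\,d\mu^*)<\|f\|_\infty\epsilon$; a triangle inequality closes the argument. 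The main obstacle I anticipate is exactly the subtle point in the middle paragraph: $W_1^*(\mu_0^{(n)})\to 0$ only forces $\mu_0^{(n)}$ to \emph{concentrate} near $\partial Y$ and not its total mass to vanish, so the compact-support restriction on $f$ at the intermediate vague-convergence stage is essential, and this is precisely why the additional mass-convergence hypothesis in (ii) is needed to recover full weak convergence.
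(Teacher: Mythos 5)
Your proof is correct. The direction (i)$\Rightarrow$(ii) is essentially the paper's own argument: the same rescaling $\mu_1^{(n)}=\frac{\mu^*(Y)}{\mu^{(n)}(Y)}\mu^{(n)}$ (resp.\ the symmetric rescaling of $\mu^*$), the same appeal to Lemma \ref{lem:W0_estimate}, and the same subsequence trichotomy; the reduction to $p=1$ via boundedness of $X$ is also how the paper handles the $p$-dependence. The direction (ii)$\Rightarrow$(i) is where you genuinely diverge. The paper's route is soft: it takes (near-)optimal charged representatives $(\mu^{(n)}+\rho^{(n)},\rho^{(n)})$ and $(\mu^*+\eta^{(n)},\eta^{(n)})$, uses compactness of $\hat X$ to extract weak subsequential limits $\tilde\mu^*,\rho^*,\eta^*$, and concludes from $\tilde W_p\big((\tilde\mu^*+\rho^*,\rho^*),(\mu^*+\eta^*,\eta^*)\big)=0$ that $\tilde\mu^*=\mu^*$ — so it never needs to know \emph{why} mass can be lost, only that $\tilde W_p$ is a genuine metric on $\tilde{\mathcal P}_p(Y|X)$. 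Your route is quantitative: you invoke the representation formula of Lemma \ref{lem:rep_W0_1}(i), identify $W_1^*(\nu)=\int_Y d'(\cdot,\partial)\,\dd\nu$, and use Markov's inequality to show the annihilated pieces cannot charge the (compactly contained) support of a test function. This buys a transparent explanation of the role of the mass hypothesis (your closing remark is exactly the right diagnosis), at the price of leaning on the hardest lemma of Section \ref{sec:Wnull} — the chain-decomposition proof of the $\ge$-inequality in Lemma \ref{lem:rep_W0} — where the paper's argument only needs soft weak compactness; note also that Lemma \ref{lem:rep_W0_1} is stated in the paper under the standing assumption that $(X,d)$ is a length space (though for $p=1$ its proof does not actually use this), so if you want your argument to match the bare hypotheses of the lemma you should either add that caveat or observe that only the $p=1$ case of the representation is needed. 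Two cosmetic points: the constant in your reduction $W_p^0\le D^{1-1/p}(W_1^0)^{1/p}$ should account for $d^*\le 2D$, and in the case $m>0$, $m_n\ge m$ you should say explicitly that weak convergence on $Y$ tested against $C(X)|_Y$ gives weak convergence of the normalized measures on $X$; neither affects the argument.
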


\begin{remark} 
	Without assuming compactness in Lemma \ref{lem:W_sharp_weak_conv}, we are still able to get that $W_p^0(\mu^{(n)}, \mu^*) \to 0$ for $\mu^{(n)},\mu^*\in \mathcal P_p^{sub}(Y)$ if $\mu^{(n)}\to \mu^*$ weakly in $Y$ and $\int d(x,x_0)^p \dd\mu^{(n)}(x) \to \int d(x,x_0)^p \dd \mu^*(x)$ for some $x\in Y$.
\end{remark}

\begin{proof}[Proof of Lemma \ref{lem:W_sharp_weak_conv}] 
Assume $\mu^{(n)} \to \mu^*$ weakly on $Y$. It again suffices to prove the result for $p=1$.
We want to use Lemma \ref{lem:W0_estimate} to show continuity. In order to apply this lemma, we have to decompose the \emph{larger} measure. We will proceed in three steps. First we will consider only sequences $(\mu^{(n)})$ with
$\mu^{(n)}(Y)\geq \mu^*(Y)$

for all $n\in\mn$. Define $\lambda_n :=\frac{\mu^*(Y)}{\mu^{(n)}(Y)}$ and $\mu^{(n)}_1:= \lambda_n \mu^{(n)}$. Then $\mu^{(n)}_1(Y) = \mu^*(Y)$, $\lambda_n\to 1$, and for $f\in C_b^0$
  \begin{align*}
   \left| \int_Xf\dd\mu^{(n)}_1 - \int_X f \dd\mu^* \right| 
    \leq & \left| \int_X \lambda_n f \dd\mu^{(n)} - \int_X f\dd\mu^{(n)} \right| + \left| \int_X f \dd\mu^{(n)} - \int_X  f \dd\mu^* \right| \\
    =& |\lambda_n-1| \left| \int_X f \dd\mu^{(n)} \right| + \left| \int_X f \dd\mu^{(n)} - \int_X  f \dd\mu^* \right| 
    \longrightarrow 0.
  \end{align*}
 Hence, we have convergence in the Kantorovich-Wasserstein metric: $W_1(\mu^{(n)}_1, \mu^*) \to 0$.
Writing $\mu^{(n)}_0:= (1-\lambda_n) \mu^{(n)}$, by Lemma \ref{lem:W0_estimate} we finally have
  \[ {W^0_1}(\mu^{(n)},\mu^*) \leq W_1(\mu^{(n)}_1,\mu^*) + \int_X d(x,z) \dd\mu^{(n)}_0(x) \longrightarrow 0. \]
Now, for the case that $\mu^{(n)}(Y)\leq \mu^*(Y)$, let $\lambda'_n:= \frac{\mu^{(n)}(Y)}{\mu^{*}(Y)}$ and $\mu^*_{1,n} := \lambda'_n\mu^*$. Then $\mu^*_{1,n}(Y)= \mu^{(n)}(Y)$ and $\lambda'_n\to 1$. Given $f\in C_b^0$, by
  \[ \left| \int_X f\dd \mu^*_{1,n} - \int_X f \dd \mu^* \right| \leq |\lambda'_n-1| \left| \int_X f \dd \mu^* \right| \longrightarrow 0, \]
we see that $\mu^*_{1,n} \rightharpoonup \mu^*$. In a next step this yields
  \begin{align*}
   \left| \int_X f \dd \mu^*_{1,n} - \int_X f \dd\mu^{(n)} \right| \leq \left| \int_X f \dd \mu^*_{1,n} - \int_X f \dd\mu^{*} \right| + \left| \int_X f \dd \mu^* - \int_X f \dd\mu^{(n)} \right| \longrightarrow 0,
  \end{align*}
i.e.\! $\mu^*_{1,n}-\mu^{(n)} \rightharpoonup 0$. 
Hence, using again Lemma \ref{lem:W0_estimate}, we see that
  \[  {W^0_1}(\mu^{(n)},\mu^*) \leq W_1(\mu^{(n)},\mu^*_{1,n}) + \int_X d(x,z) \dd\mu^*_{0,n }(x) \longrightarrow 0 \]
Since a sequence converges if and only if every subsequence has a convergent subsequence, we now can conclude  that $a_n:= {W^0_1}(\mu^{(n)},\mu^*)$ converges to 0. Indeed, take a subsequence $a_{n_k}$. Then we can take a further subsequence $a_{n_{k_\ell}}$ such that either $\mu^{(n_{k_\ell})}(Y) \geq \mu^*(Y)$ for every $\ell\in\mn$, or $\mu^{(n_{k_\ell})}(Y) \leq \mu^*(Y)$ for every $\ell\in\mn$. But then the above ensures convergence of these subsequences to 0.

  \medskip
  
   Conversely, now assume that $\mu^{(n)}(Y)\to \mu^*(Y)$ and ${W^0_p}(\mu^{(n)},\mu^*)\to 0$. 
Let $\rho^{(n)},\eta^{(n)} \in {\mathcal P}^{sub}(X)$ such that $(2\rho^{(n)} +\mu^{(n)})(X)=1 = (2\eta^{(n)}+\mu^*)(X)$, and ${W^0_p}(\mu^{(n)},\mu^*) = \tilde W_p((\mu^{(n)}+\rho^{(n)}, \rho^{(n)}),(\mu^*+\eta^{(n)},\eta^{(n)}))$. 
Let $\mu^{(n_k)}$ be any subsequence and consider the corresponding subsequences $\rho^{(n_k)}, \eta^{(n_k)}$. 
Compactness of $\hat X$ implies that 
there exists a sub-subsequence $(n_{k_\ell})_\ell$ such that 
  \[ \eta^{(n_{k_\ell})}\rightharpoonup \eta^* \text{ and } \mu^{(n_{k_\ell})} \rightharpoonup \tilde\mu^* \text{ and } \rho^{(n_{k_\ell})}  \rightharpoonup \rho^* \]
  with suitable limits points $\eta^*, \tilde\mu^*, \rho^*$.
Then we have
  \begin{align*}
    \tilde W_p \left( (\tilde\mu^*+\rho^*, \rho^*),(\mu^*+\eta^*,\eta^*) \right) \leq & \ \tilde W_p \left( (\tilde\mu^*+\rho^*, \rho^*), (\mu^{(n_{k_\ell})}+\rho^{(n_{k_\ell})}, \rho^{(n_{k_\ell})}) \right) \\
      &+ \tilde W_p\left( (\mu^{(n_{k_\ell})}+\rho^{(n_{k_\ell})}, \rho^{(n_{k_\ell})}),(\mu^*+\eta^{(n_{k_\ell})},\eta^{(n_{k_\ell})}) \right) \\
      &+ \tilde W_p\left( (\mu^*+\eta^{(n_{k_\ell})}, \eta^{(n_{k_\ell})}),(\mu^*+\eta^*,\eta^*) \right) 
       \longrightarrow 0.
  \end{align*}
Hence $\rho^*=\eta^*$ and in particular $\tilde\mu^*=\mu^*$. This way we see that every subsequence of $\mu^{(n)}$ has a further subsequence which converges to $\mu^*$, so that also the whole sequence converges to $\mu^*$. 
\end{proof}

\section{Proofs for Subsections \ref{subsec:gradient_flow_perspective} \& \ref{subsec:gradient_estimate_bochner}} \label{sec:proofs}

\begin{proof}[Proof of Proposition \ref{evi}]
 This will follow from the identification with the glued space and the properties shown in Subsection \ref{subsec:gluing}, in particular Theorem \ref{thm:E_GL=E_hat}. Let us provide the details.
 
  i) Given $\sigma_0\in \tilde\Pz(Y|X)$, consider $\hat\sigma:= \Phi(\sigma_0) \in \Pz(\hat X)$, with the isometry $\Phi$ given in Lemma \ref{lem:phipsi}. Since $\hat X$ is an $\rcd{K}{\infty}$ space by Assumption \ref{ass} and Lemma \ref{lem:ass=ent_hat_convex}, the $\evik$-gradient flow $\hat\sigma_t\in \Pz(\hat X)$ starting in $\hat\sigma$ exists. Again by the identification of the entropies in Lemma \ref{lem:ass=ent_hat_convex}, the flow $\sigma_t:= \Psi(\hat\sigma_t)$ is the $\evik$-gradient flow of $\widetilde\eent$ in $\tilde\Pz(Y|X)$.
  
  ii) Let $\mu_0 \in \mathcal P^{sub}_2(X)$, and let $\sigma_0\in \tilde\Pz (Y|X)$ such that $\mu_0=\sigma_0^+-\sigma_0^-$ (such a $\sigma_0$ exists by definition of $\Pz^{sub}_2(X)$). Consider $\sigma_t := \tilde\scrp_t\sigma_0$. By Lemma \ref{lem:P_t_measure} we have
    \[ \sigma_t^+-\sigma_t^-=\scrp_t(\sigma_0^+-\sigma_0^-) = \scrp_t\mu_0. \]
    This also shows the independence of the chosen $\sigma_0$, as the right-hand side is independent of it. 
    
  iii) As in ii).
  
  iv) Let $\sigma_0\in \tilde\Pz_2(Y|X)$ and define $\mu_0:=\sigma_0^+-\sigma_0^-$ and $\nu_0:=\sigma_0^++\sigma_0^-$. Then, again by Lemma \ref{lem:P_t_measure},
    \begin{align*}
     \sigma_t = \tilde\scrp_t\sigma_0 =& \left( \scrp_t \frac{\sigma_0^++\sigma_0^-}{2} + \scrp_t^0\frac{\sigma_0^++\sigma_0^-}{2}, \scrp_t \frac{\sigma_0^++\sigma_0^-}{2} + \scrp_t^0\frac{\sigma_0^+-\sigma_0^-}{2} \right) \\
      =& \left( \scrp_t \frac{\mu_0}{2} + \scrp_t^0\frac{\nu_0}{2}, \scrp_t \frac{\mu_0}{2} + \scrp_t^0\frac{\nu_0}{2} \right) \\
      =& \left( \frac{\mu_t+\nu_t}{2}, \frac{\mu_t-\nu_t}{2} \right).
    \end{align*}  
 \end{proof}
 
 \begin{proof}[Proof of Proposition \ref{prop:W_contraction}]
  This is again a direct consequence of the identification, since by Assumption \ref{ass} the glued space is an $\rcd{K}{\infty}$ space and thus satisfies the desired Wasserstein contraction.
 \end{proof}
 
 \begin{proof}[Proof of Theorem \ref{thm:ass_implies_bochner}] 
 i) Under Assumption $\ref{ass}$, $\hat X$ is an $\rcd{K}{\infty}$ space and hence satisfies a gradient estimate with $p=2$. By \cite{Savare}*{Cor. 4.3} we have the improved gradient estimate for $p\in [1,2]$ and by Jensen's inequality one easily obtains the gradient estimate for $p>2$ from that. Now we take a function $f\in D(\mathcal E^0)$ and define 
    \begin{equation*}
     u:=\begin{cases}
         f, &\text{ on } X^+\\
         -f, & \text{ on } X^-.
        \end{cases}
    \end{equation*}
  Then $u\in D(\ehat)$ and $|\nabla u|=|\nabla f|$ on each $X^i$. Thus, inserting $u$ in the gradient estimate on $\hat X$ yields on the upper half $X^+$:
    \[ |\nabla P_t^0f|^p = |\nabla \hat P_tu|^p \leq e^{-pKt} \hat P_t|\nabla u|^p = e^{-pKt} P_t|\nabla f|^p. \]

 ii) This follows directly from the duality of the heat semigroups \eqref{eq:P_dual}.
 \end{proof}

 \begin{proof}[Proof of Theorem \ref{thm:halfspace}]
  (i)$\Rightarrow$(ii): Consider the doubling of $X$, $V:= \hat X$. Then we can view $Y$ as an open subset of $\hat X$ by identifying it with $Y^+$. Now define $\psi: V\to V$ as the ``mirror mapping'' 
    \[ \psi(x) := \begin{cases}
                   \iota_-\circ \iota_+^{-1} (x), & \text{if } x\in X^+ \\
                   \iota_+\circ \iota_-^{-1} (x), & \text{if } x\in X^-.
                  \end{cases}
 \]
 It is easy to see that $\psi$ is a measure-preserving isometry. Further, let $x\in X^+$ such that $\psi(x)=x$, i.e.\! $\iota_- \circ \iota_+^{-1}(x)=x$. This in particular means $x\in Z$ since for $x\in Y^+$ we would have $\iota_-\circ \iota_+^{-1}(x)\in Y^-$, which would contradict $\psi(x)=x\in Y^+$. Finally observe that $\psi(Y)=\psi(Y^+)=\iota_-(Y)=Y^-=V\setminus Y^+$. 
 
 (ii)$\Rightarrow$(iii): Take $i=1, V_1:=V$.
 
 (ii)$\Rightarrow$(i): Thanks to $\xi$, we can define a measure-preserving isometry $\varphi\colon (V,d_V,\mm_V) \to (\hat X,\hat d,\hat \mm)$ by mapping $Y\cong \tilde Y$ to $Y^+$, $\psi(Y)$ to $Y^-$ and $\partial Y$ to $Z=X\setminus Y\subset \hat X$, where $\psi$ is the map given in the definition of a halfspace. Since curvature-dimension conditions are preserved under measure-preserving isometries, $\hat X$ is an $\rcd{K}{\infty}$ space. Lemma \ref{lem:ass=ent_hat_convex} then tells us that Assumption \ref{ass} is satisfied.
 
 (iii)$\Rightarrow$(i): We want to show that $\hat X$ is an $\rcd{K}{\infty}$ space by using the local-to-global property. Given $x\in \partial Y$, choose $i$ such that $x\in X_i$. Then we can identify $(Y\cap X_i)^+\cup (Y\cap X_i)^-\subset \hat X$ with $\hat W_i\subset V_i$ via $\xi_i$. Given measures $\mu_0,\mu_1\in \Pz(\hat X)$ supported in $(Y\cap X_i)^+\cup (Y\cap X_i)^-$, then $\nu_\ell:=(\xi_i)_\# \mu_\ell \in \Pz(V_i), \ell=0,1,$ are supported in $\hat W_i$. Since $V_i$ is an $\rcd{K}{\infty}$ space, there is a geodesic $\nu_t\in \Pz(V_i)$ connecting $\nu_0$ and $\nu_1$ such that the entropy $\operatorname{Ent}_{\mm_{V_i}}$ is convex. Pulling back this curve via $\mu_t:=(\xi_i^{-1})_\#\nu_t$ provides us with a geodesic in $\Pz(\hat X)$ such that $\widehat{\eent}$ is convex. Combining this convex optimal transport near the boundary (i.e.\! the gluing edge) together with the local $\operatorname{RCD}$ property of $X$ (and hence $X^+$ and $X^-$), we have that $\hat X$ is a local $\rcd{K}{\infty}$ space and by the local-to-global property also an $\rcd{K}{\infty}$ space. 
 \end{proof}

Let us finally come to the proof of Theorem \ref{riem}. 
When interested in curvature properties, gluing together Riemannian manifolds is a delicate issue, since in general the glued Riemannian metric will only be continuous and so one cannot define the curvature tensors. 

\begin{Thm} \label{thm:doublemanifold}
 Let $(M,g)$ be a complete, $n$-dimensional Riemannian manifold with Ricci curvature bounded below by $K\in \mr$. Let $Y\subset M$ be an open, bounded, convex subset with a smooth, compact boundary, equip it with the Riemannian distance $d$ and volume measure $\mm$, and write $X:=\overline Y$. Then the 2-gluing of $(X,d,\mm)$ along $\partial Y$, denoted by $(\hat X,\hat d,\hat\mm)$, is an $\rcd{K}{n}$ space.
\end{Thm}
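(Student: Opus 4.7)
The plan is to approximate the glued space $\hat X$ by a sequence of smooth Riemannian manifolds whose Ricci curvature is bounded below by a constant arbitrarily close to $K$, and then invoke stability of the $\rcd{K}{n}$ condition under measured Gromov-Hausdorff convergence.

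The space $\hat X$ carries a natural Riemannian metric $\hat g$ that is smooth on each copy $X^+$ and $X^-$ separately but only $C^0$ across the gluing locus $\partial Y$, so its Ricci tensor is not defined there in the classical sense. I would use the mollification procedure developed by Kosovskii and refined by Schlichting \cite{SchlichtingThesis, SchlichtingArticle} to construct, for each small $\varepsilon>0$, a genuinely smooth Riemannian metric $g_\varepsilon$ on $\hat X$ which coincides with $\hat g$ outside an $\varepsilon$-tubular neighborhood of $\partial Y$ and which respects the mirror symmetry of the doubling. The key geometric input is the convexity of $\partial Y$ in $M$: when two isometric copies of $(X, g)$ are glued along $\partial Y$, the second fundamental forms contributed by the two sides combine with precisely the sign required by the Kosovskii--Schlichting framework, so that their smoothing yields $\Ric(g_\varepsilon) \geq K - \delta(\varepsilon)$ with $\delta(\varepsilon)\to 0$ as $\varepsilon\to 0$.

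With these approximations in hand, each $(\hat X, d_{g_\varepsilon}, \operatorname{vol}_{g_\varepsilon})$ is a compact smooth $n$-dimensional Riemannian manifold with Ricci bounded below, hence an $\rcd{K-\delta(\varepsilon)}{n}$ space. Since the modification lives in shrinking tubular neighborhoods of $\partial Y$ (whose $\hat\mm$-measure tends to zero) and $g_\varepsilon$ converges uniformly to $\hat g$, a direct verification shows that $(\hat X, d_{g_\varepsilon}, \operatorname{vol}_{g_\varepsilon}) \to (\hat X, \hat d, \hat\mm)$ in pointed measured Gromov-Hausdorff sense as $\varepsilon\to 0$. The mGH-stability of the $\rcd{K}{N}$ condition (Gigli--Mondino--Savar\'e, Erbar--Kuwada--Sturm) then transports the bound to the limit and gives $\rcd{K}{n}$ for $(\hat X, \hat d, \hat\mm)$.

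The main obstacle is the smoothing step. Although Kosovskii and Schlichting provide the general machinery, one must check that the doubling construction actually satisfies their hypotheses with the correct sign conventions, and that the reflection symmetry causes the distributional jump of the second fundamental form across $\partial Y$ to vanish --- so that convexity of $\partial Y$ suffices to produce a smooth metric whose Ricci lower bound degrades by no more than $\delta(\varepsilon)$. Once this is in place, the mGH-stability argument is entirely standard.
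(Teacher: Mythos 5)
Your proposal is correct and follows essentially the same route as the paper: smooth the merely continuous doubled metric $\hat g$ via the Kosovskii--Schlichting procedure (convexity of $\partial Y$ being exactly what makes their hypotheses hold) to obtain smooth metrics with $\Ric\ge K-\eps$, then pass to the limit using uniform convergence of the metrics $\Rightarrow$ measured Gromov--Hausdorff convergence and the stability of the $\rcd{K}{n}$ condition. The only quibble is your remark that the distributional jump of the second fundamental form across $\partial Y$ ``vanishes'': for a convex, non-totally-geodesic boundary it does not vanish but is nonnegative, and this nonnegativity (rather than vanishing) is precisely the sign condition the Kosovskii--Schlichting smoothing requires.
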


\begin{proof}
First observe that the gluing of Riemannian manifolds yields a continuous Riemannian metric
  \[ \hat g(p) = \begin{cases}
               g_+(p), &\text{ if } p\in Y^+ \\
               g_-(p), &\text{ if } p\in Y^-,
              \end{cases}
 \]
whose Riemannian distance and volume measure are $d_{\hat g}=\hat d$ and $\mm_{\hat g}=2\hat \mm$ in terms of our metric gluing.

 By convexity, the submanifold $Y$ satisfies the same lower bound on the Ricci curvature. A result of Schlichting \cites{SchlichtingArticle,SchlichtingThesis} now ensures that there is a sequence of \emph{smooth} Riemannian metrics $\hat g_\eps$ on the glued manifold $\hat X$ converging to $\hat g$ uniformly as $\eps\to 0$ and such that
  \[ \ric_{\hat g_\eps} \geq (K-\eps). \]
 Thus we get a sequence of smooth, compact metric measure spaces $(\hat X,d_{\hat g_\eps},\mm_{\hat g_\eps})$ which satisfy the $\rcd{K-\eps}{n}$ condition. The stability of this condition under measured Gromov-Hausdorff convergence together with the convergence result in the following lemma completes the proof.
\end{proof}

\begin{Lemma} \label{lem:unif_conv_GH}
 Let $(g_\eps)_{\eps>0}$ be a sequence of smooth Riemannian metrics and $g$ a continuous Riemannian metric on a compact, smooth manifold $\mathfrak M$. If $g_\eps \to g$ uniformly as $\eps\to 0$, then $(\mathfrak M, d_\eps, \mm_\eps) \to (\mathfrak M, d, \mm)$ in the measured Gromov-Hausdorff sense, where $d_\eps,\mm_\eps$ and $d,\mm$ are the distance functions and volume measures obtained by $g_\eps$ and $g$, respectively.
\end{Lemma}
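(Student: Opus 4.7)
The plan is to reduce measured Gromov--Hausdorff convergence to two separate assertions: uniform convergence $d_\eps \to d$ on $\mathfrak M \times \mathfrak M$, and weak convergence $\mm_\eps \to \mm$. Once both are in hand, the identity $\id_{\mathfrak M}$ serves automatically as an $\eta$-isometry from $(\mathfrak M, d_\eps)$ to $(\mathfrak M, d)$ for any $\eta>0$ eventually, and its pushforward of $\mm_\eps$ is just $\mm_\eps$ again; the standard characterization of mGH convergence then finishes the job. Since ``uniform convergence of Riemannian metric tensors'' on a manifold has no intrinsic meaning, I would first fix a finite atlas of coordinate charts (using compactness of $\mathfrak M$) and interpret $g_\eps\to g$ as uniform convergence of the coefficient matrices in each chart --- equivalently, uniform convergence with respect to any auxiliary smooth reference metric.

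The core step is to promote the tensorial convergence to a two-sided comparison. Since $g$ is continuous and positive definite on the compact manifold $\mathfrak M$, its eigenvalues are uniformly bounded below by some $\lambda>0$; in particular, for every $\delta>0$ there exists $\eps_0$ such that for all $\eps<\eps_0$
\[ (1-\delta)\, g(v,v) \le g_\eps(v,v) \le (1+\delta)\, g(v,v) \quad \forall\, v\in T\mathfrak M. \]
Integrating along absolutely continuous curves yields $(1-\delta)^{1/2}\, L_g(\gamma) \le L_{g_\eps}(\gamma) \le (1+\delta)^{1/2}\, L_g(\gamma)$, and taking the infimum over curves joining two fixed points gives the same bounds for $d_\eps$ versus $d$. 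Since $d$ is bounded on $\mathfrak M$, this upgrades to $\sup_{x,y}|d_\eps(x,y)-d(x,y)|\to 0$.

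For the measures, the local densities $\sqrt{\det g_\eps}$ converge uniformly to $\sqrt{\det g}$ in each chart, because $\det$ and $\sqrt{\cdot}$ are locally Lipschitz on the region where $g$ takes values (itself kept bounded away from the singular locus by compactness). Using a partition of unity subordinate to the finite atlas, this gives $\|\mm_\eps-\mm\|_{TV}\to 0$, in particular $\mm_\eps\to\mm$ weakly. Assembling: given $\eta>0$ pick $\eps_0$ so $\|d_\eps-d\|_\infty<\eta$ for $\eps<\eps_0$; then $\id$ is trivially surjective and satisfies $|d(x,y)-d_\eps(x,y)|<\eta$, hence an $\eta$-isometry, and $\id_{\#}\mm_\eps=\mm_\eps\rightharpoonup\mm$, which is precisely measured Gromov--Hausdorff convergence.

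The main obstacle I expect is purely notational/definitional: making ``uniform convergence'' of the metric tensors rigorous without a canonical norm on $T\mathfrak M$, and verifying that the resulting two-sided comparison is chart-independent. Once one commits to the finite-atlas formulation above, every subsequent step is a straightforward consequence of compactness of $\mathfrak M$ (boundedness of $d$, finite cover by charts, uniform positive lower bound on the eigenvalues of $g$).
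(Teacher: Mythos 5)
Your argument is correct, and it fills a gap the paper deliberately leaves open: the authors state only that the lemma ``seems to be well-known'' and leave its proof to the reader, so there is no proof in the paper to compare against. Your route --- two-sided comparison of the quadratic forms via the uniform positive lower bound on $g$, hence uniform convergence of length functionals and distances, plus uniform convergence of the volume densities $\sqrt{\det g_\eps}$ giving total-variation (hence weak) convergence of the measures, with the identity map serving as the $\eta$-isometry --- is exactly the standard argument one would expect here, and all the steps check out.
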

This seems to be well-known. We leave its straightfoward \emph{proof} to the reader.

\begin{proof}[Proof of Theorem \ref{riem}]
 As a Riemannian manifold with lower Ricci curvature bound $K$, $M$ is an $\rcd{K}{\infty}$ space. As a \emph{convex} subset, also $\overline Y$ with the restricted distance and measure is an $\rcd{K}{\infty}$ space. Now Assumption \ref{ass} is satisfied by identification of the entropies in Lemma \ref{lem:ass=ent_hat_convex}, since the doubling of the manifold is an $\rcd{K}{\infty}$ space by Theorem \ref{thm:doublemanifold}. 
\end{proof}


\begin{bibdiv}
	\begin{biblist}
		
		\bib{Alexandrov}{book}{
			author={Alexandrow, A.~D.},
			title={Die innere {G}eometrie der konvexen {F}l\"achen},
			publisher={Akademie-Verlag, Berlin},
			date={1955},
			note={Translated from thr Russian version, 1948},
			review={\MR{0071041}},
		}
		
		\bib{Atiyah_Bott}{incollection}{
			author={Atiyah, M.~F.},
			author={Bott, R.},
			title={The index problem for manifolds with boundary},
			date={1964},
			booktitle={Differential {A}nalysis, {B}ombay {C}olloq., 1964},
			publisher={Oxford Univ. Press, London},
			pages={175\ndash 186},
			review={\MR{0185606}},
		}
		
		\bib{AGMR}{article}{
			author={Ambrosio, Luigi},
			author={Gigli, Nicola},
			author={Mondino, Andrea},
			author={Rajala, Tapio},
			title={Riemannian {R}icci curvature lower bounds in metric measure
				spaces with {$\sigma$}-finite measure},
			date={2015},
			ISSN={0002-9947},
			journal={Trans. Amer. Math. Soc.},
			volume={367},
			number={7},
			pages={4661\ndash 4701},
			url={http://dx.doi.org/10.1090/S0002-9947-2015-06111-X},
			review={\MR{3335397}},
		}
		
		\bib{AGS12RIEM}{article}{
			author={{Ambrosio}, L.},
			author={{Gigli}, N.},
			author={{Savar{\'e}}, G.},
			title={Metric measure spaces with {R}iemannian {R}icci curvature bounded
				from below},
			date={2014},
			journal={Duke Math. J.},
			volume={163},
			number={7},
			pages={1405\ndash 1490},
		}
		
		\bib{AGS-Bakry-Emery}{article}{
			author={Ambrosio, Luigi},
			author={Gigli, Nicola},
			author={Savar\'e, Giuseppe},
			title={Bakry-{{\'E}mery} curvature-dimension condition and {R}iemannian
				{R}icci curvature bounds},
			date={2015},
			ISSN={0091-1798},
			journal={Ann. Probab.},
			volume={43},
			number={1},
			pages={339\ndash 404},
			url={http://dx.doi.org/10.1214/14-AOP907},
			review={\MR{3298475}},
		}
		
		\bib{Bridson_Haefliger}{book}{
			author={Bridson, Martin~R.},
			author={Haefliger, Andr\'e},
			title={Metric spaces of non-positive curvature},
			series={Grundlehren der Mathematischen Wissenschaften [Fundamental
				Principles of Mathematical Sciences]},
			publisher={Springer-Verlag, Berlin},
			date={1999},
			volume={319},
			ISBN={3-540-64324-9},
			url={http://dx.doi.org/10.1007/978-3-662-12494-9},
			review={\MR{1744486}},
		}
		
		\bib{Figalli_Gigli}{article}{
			author={Figalli, Alessio},
			author={Gigli, Nicola},
			title={A new transportation distance between non-negative measures, with
				applications to gradients flows with {D}irichlet boundary conditions},
			date={2010},
			ISSN={0021-7824},
			journal={J. Math. Pures Appl. (9)},
			volume={94},
			number={2},
			pages={107\ndash 130},
			url={http://dx.doi.org/10.1016/j.matpur.2009.11.005},
			review={\MR{2665414}},
		}
		
		\bib{FOT}{book}{
			author={Fukushima, Masatoshi},
			author={Oshima, Yoichi},
			author={Takeda, Masayoshi},
			title={Dirichlet forms and symmetric {M}arkov processes},
			series={De Gruyter Studies in Mathematics},
			publisher={Walter de Gruyter \& Co., Berlin},
			date={1994},
			volume={19},
			ISBN={3-11-011626-X},
			url={http://dx.doi.org/10.1515/9783110889741},
			review={\MR{1303354}},
		}
		
		\bib{Han}{article}{
			author={{Han}, B.-X.},
			title={{Self-improvement of gradient estimate of heat flows on metric
					measure spaces}},
			date={2017-02},
			journal={ArXiv e-prints},
			eprint={1702.00740},
		}
		
		\bib{Kosovskii}{article}{
			author={Kosovski\u\i, N.~N.},
			title={Gluing of {R}iemannian manifolds of curvature {$\geq \kappa$}},
			date={2002},
			ISSN={0234-0852},
			journal={Algebra i Analiz},
			volume={14},
			number={3},
			pages={140\ndash 157 (Russian)},
			note={Translation in St. Petersburg Math. J. 14 (2003), no. 3,
				467--478},
			review={\MR{1921991}},
		}
		
		\bib{KondratyevMV}{article}{
			author={Kondratyev, Stanislav},
			author={Monsaingeon, L\'eonard},
			author={Vorotnikov, Dmitry},
			title={A new optimal transport distance on the space of finite {R}adon
				measures},
			date={2016},
			ISSN={1079-9389},
			journal={Adv. Differential Equations},
			volume={21},
			number={11-12},
			pages={1117\ndash 1164},
			url={http://projecteuclid.org/euclid.ade/1476369298},
			review={\MR{3556762}},
		}
		
		\bib{LieroMielkeSavare}{article}{
			author={Liero, Matthias},
			author={Mielke, Alexander},
			author={Savar\'e, Giuseppe},
			title={Optimal entropy-transport problems and a new
				{H}ellinger-{K}antorovich distance between positive measures},
			date={2018},
			ISSN={0020-9910},
			journal={Invent. Math.},
			volume={211},
			number={3},
			pages={969\ndash 1117},
			url={https://doi.org/10.1007/s00222-017-0759-8},
			review={\MR{3763404}},
		}
		
		\bib{LV09}{article}{
			author={Lott, John},
			author={Villani, C{\'e}dric},
			title={Ricci curvature for metric-measure spaces via optimal transport},
			date={2009},
			journal={Ann. of Math. (2)},
			volume={169},
			number={3},
			pages={903\ndash 991},
		}
		
		\bib{Paulik}{book}{
			author={Paulik, Gustav},
			title={Gluing spaces and analysis},
			series={Bonner Mathematische Schriften [Bonn Mathematical Publications]},
			publisher={Universit\"at Bonn, Mathematisches Institut, Bonn},
			date={2005},
			volume={372},
			note={Dissertation, Rheinische Friedrich-Wilhelms-Universit\"at Bonn,
				Bonn, 2005},
			review={\MR{2204257}},
		}
		
		\bib{Perelman}{article}{
			author={Perelman, Grigori},
			title={Alexandrov's spaces with curvatures bounded from below {II}},
			date={1991},
			journal={unpublished preprint},
			eprint={https://anton-petrunin.github.io/papers/alexandrov/perelmanASWCBFB2+.pdf},
		}
		
		\bib{Petrunin}{incollection}{
			author={Petrunin, Anton},
			title={Applications of quasigeodesics and gradient curves},
			date={1997},
			booktitle={Comparison geometry ({B}erkeley, {CA}, 1993--94)},
			series={Math. Sci. Res. Inst. Publ.},
			volume={30},
			publisher={Cambridge Univ. Press, Cambridge},
			pages={203\ndash 219},
			url={http://dx.doi.org/10.2977/prims/1195166129},
			review={\MR{1452875}},
		}
		
		\bib{Pogorelov}{book}{
			author={Pogorelov, A.~V.},
			title={Extrinsic geometry of convex surfaces},
			publisher={American Mathematical Society, Providence, R.I.},
			date={1973},
			note={Translated from the Russian by Israel Program for Scientific
				Translations, Translations of Mathematical Monographs, Vol. 35},
			review={\MR{0346714}},
		}
		
		\bib{PiccoliRossi}{article}{
			author={Piccoli, Benedetto},
			author={Rossi, Francesco},
			title={Generalized {W}asserstein distance and its application to
				transport equations with source},
			date={2014},
			ISSN={0003-9527},
			journal={Arch. Ration. Mech. Anal.},
			volume={211},
			number={1},
			pages={335\ndash 358},
			url={https://doi.org/10.1007/s00205-013-0669-x},
			review={\MR{3182483}},
		}
		
		\bib{Rizzi}{article}{
			author={{Rizzi}, L.},
			title={{A counterexample to gluing theorems for MCP metric measure
					spaces}},
			date={2017-11},
			journal={ArXiv e-prints},
			eprint={1711.04499},
		}
		
		\bib{Rajala_Sturm}{article}{
			author={Rajala, Tapio},
			author={Sturm, Karl-Theodor},
			title={Non-branching geodesics and optimal maps in strong
				{$CD(K,\infty)$}-spaces},
			date={2014},
			ISSN={0944-2669},
			journal={Calc. Var. Partial Differential Equations},
			volume={50},
			number={3-4},
			pages={831\ndash 846},
			url={http://dx.doi.org/10.1007/s00526-013-0657-x},
			review={\MR{3216835}},
		}
		
		\bib{Savare}{article}{
			author={Savar{\'e}, Giuseppe},
			title={Self-improvement of the {B}akry-\'{E}mery condition and
				{W}asserstein contraction of the heat flow in {${\rm RCD}(K,\infty)$} metric
				measure spaces},
			date={2014},
			journal={Discrete Contin. Dyn. Syst.},
			volume={34},
			number={4},
			pages={1641\ndash 1661},
		}
		
		\bib{SchlichtingArticle}{article}{
			author={{Schlichting}, A.},
			title={{Gluing Riemannian manifolds with curvature operators at least
					k}},
			date={2012-10},
			journal={ArXiv e-prints},
			eprint={1210.2957},
		}
		
		\bib{SchlichtingThesis}{thesis}{
			author={Schlichting, A.},
			title={Smoothing singularities of riemannian metrics while preserving
				lower curvature bounds},
			type={Ph.D. Thesis},
			date={Universit\"at Magdeburg, 2014},
		}
		
		\bib{STU06i}{article}{
			author={Sturm, Karl-Theodor},
			title={On the geometry of metric measure spaces. {I}},
			date={2006},
			journal={Acta Math.},
			volume={196},
			number={1},
			pages={65\ndash 131},
		}
		
		\bib{VIL09ii}{book}{
			author={Villani, C{\'e}dric},
			title={Optimal transport},
			series={Grundlehren der Mathematischen Wissenschaften},
			publisher={Springer-Verlag, Berlin},
			date={2009},
			volume={338},
		}
		
	\end{biblist}
\end{bibdiv}

\end{document}